\documentclass{SCIYA2023enOL}

\usepackage{amssymb}
\usepackage{amsthm}
\usepackage{amsfonts}
\usepackage{amsmath}
\usepackage{color}
\usepackage{mathtools}
\usepackage{natbib}
\usepackage{bbm}            

\usepackage{hyperref}

\hypersetup{colorlinks=true,citecolor=black,citecolor=black}

\usepackage{graphics}
\usepackage{mathrsfs}
\usepackage{amscd}
\usepackage{comment}
\usepackage{color}
\usepackage{url}

\usepackage{bm}

\newcommand{\bN} {{\mathbb{N}}}

\newcommand{\bW} {{\mathbb{W}}}
\newcommand{\bZ} {{\mathbb{Z}}}

\newcommand{\cF} {{\mathcal{F}}}
\newcommand{\cG} {{\mathcal{G}}}
\newcommand{\mycH} {{\mathcal{H}}}
\newcommand{\cS} {{\mathcal{S}}}
\newcommand{\cT} {{\mathcal{T}}}

\renewcommand{\span} {{\rm span}}

\newcommand{\lm} {{\rm lm}}

\newcommand{\ord} {{\rm ord}}
\newcommand{\tdeg} {{\rm tdeg}}

\def\<#1>{\langle#1\rangle}

\newcommand{\vi} {{\bm i}}
\newcommand{\vD} {{\bm D}}
\newcommand{\vj} {{\bm j}}

\newcommand{\vx} {{\bm x}}
\newcommand{\vy} {{\bm y}}

\newcommand{\vs} {{\bm s}}

\def\Dx{\vD_{x}}
\def\DS{\vD_{S}}

\newcommand{\bbeta}{{\boldsymbol{\beta}}}
\newcommand{\aalpha}{{\boldsymbol{\alpha}}}
\def\ee{\mathrm{e}}

\newcommand{\lcm} {{\rm lcm}}

\def\hx{\hat{\vx}}


\newtheorem{cor}[theorem]{Corollary}
\newtheorem{observation}[theorem]{Observation}

\theoremstyle{definition}
\newtheorem{notation}[theorem]{Notation}

\theoremstyle{remark}

\def\bm#1{\mathchoice{\kern-.5pt\mathord{\text{\bfseries\itshape#1}}\kern+.25pt}
                      {\kern-.5pt\mathord{\text{\bfseries\itshape#1}}\kern+.25pt}
                      {\kern+.25pt\mathord{\text{\scriptsize\bfseries\itshape#1}}\kern+.5pt}
                      {\kern+.25pt\mathord{\text{\tiny\bfseries\itshape#1}}}\kern+.25pt}

\usepackage{xcolor}

\online

\begin{document}

\ensubject{fdsfd}
\ArticleType{ARTICLES}
\Year{2025}
\Month{December}%
\Vol{??}
\No{?}
\BeginPage{1} %
\DOI{Science China Mathematics Manuscript for review}

\title[]{Single-exponential bounds for diagonals \\ of D-finite power series}
{Single-exponential bounds for diagonals \\ of D-finite power series}

\author[1,2,$\ast$]{Shaoshi Chen}{schen@amss.ac.cn}
\author[3]{Frédéric Chyzak}{frederic.chyzak@inria.fr}
\author[4]{Pingchuan Ma}{pcma@amss.ac.cn}
\author[5]{Chaochao Zhu}{zccjsbz@amss.ac.cn}

\AuthorCitation{Chen S., Chyzak F., Ma P., Zhu C.}

\address[1]{KLMM, Academy of Mathematics and Systems Science, \\ Chinese Academy of Sciences, Beijing 100190, China}
\address[2]{School of Mathematical Sciences, \\ University of Chinese Academy of Sciences, Beijing 100049, China}
\address[3]{Inria, 91120 Palaiseau, France}
\address[4]{Tsinghua University High School, Beijing 100084, China}
\address[5]{School of Finance and Mathematics, \\ West Anhui University, Lu'an 237012, China}

\abstract{%
D-finite power series appear ubiquitously in combinatorics, number theory, and mathematical physics.
They satisfy systems of linear partial differential equations
whose solution spaces are finite-dimensional,
which makes them enjoy a lot of nice properties.
After attempts by others in the 1980s,
Lipshitz was the first
to prove that the class they form in the multivariate case
is closed under the operation of diagonal.
In particular, an earlier work by Gessel had addressed
the D-finiteness of the diagonals of multivariate rational power series.
In this paper,
we give another proof of Gessel's result that fixes a gap in his original proof,
while extending it to the full class of D-finite power series.
We also provide a single exponential bound on the degree and order of the defining differential equation
satisfied by the diagonal of a D-finite power series in terms of the degree and order of the input differential system.
}

\keywords{D-finite power series, diagonal theorem, order bound, degree bound}

\MSC{33F10, 13N15, 68W30}

\maketitle

\section{Introduction}\label{SECT:intro}

Diagonals of multivariate formal power series appear frequently in different areas:
diagonals of rational power series play an important role in enumerative combinatorics, especially the lattice paths enumeration
(see the books \cite{Stanley-1999-EC2, Mishna-2020-ACM, PemantleWilson-2024-ACSV, Melczer-2021-IAC}
and the survey~\cite{PemantleWilson-2008-TCE});
Christol's number-theoretic conjecture,
which predicts that globally bounded D-finite power series are diagonals of rational power series~\cite{Christol-1987-FHB},
remains largely open
(see the nice survey \cite{Christol-2015-DRF} by himself);
intensive studies on diagonals also appear in computer algebra with connection to mathematical physics
\cite{AbdelazizKoutschanMaillard-2020-CC,BostanChyzakHoeijPech-2011-EFG,BostanDumontSalvy-2017-ADW,BostanBoukraaMaillardWeil-2015-DRF}.

In these contexts, formal power series are commonly given implicitly
as solutions to either algebraic or (linear) differential equations,
and the corresponding diagonals also satisfy such equations.
This is in particular the case for \emph{D-finite power series}.
Recall that these series are defined (Definition~\ref{DEF:dfinite})
as multivariate formal power series in variables $x_1,\dots,x_n$
whose infinite set of higher-order partial derivatives generates
a finite-dimensional vector space over the field of rational functions in the variables.
D-finite power series were first introduced and studied
by Stanley in 1980 in the univariate case~\cite{Stanley1980} and later systematically investigated by Lipshitz in the multivariate
case~\cite{Lipshitz1988, Lipshitz1989}.
In the early 1980's, Gessel, Stanley, Zeilberger, and many combinatorists conjectured that the diagonal of a rational power series in
several variables is D-finite.
Zeilberger~\cite{Zeilberger1980} in 1980 and Gessel~\cite{Gessel1981} in 1981 independently claimed to have proved this conjecture.
Later, in 1988, Lipshitz \cite{Lipshitz1988} pointed out that both proofs were not complete and he used a different, elementary idea to prove
that D-finite power series are closed under taking diagonals,
so that, in particular, diagonals of rational power series are D-finite.
In parallel, Christol had used the finiteness of some De Rham cohomology to prove the result:
first under some regularity assumption of a Jacobian variety~\cite{Christol-1984-DFR};
then in full generality~\cite{Christol-1985-DFR,Christol-1988-DFR}.
In 1990, Zeilberger~\cite{Zeilberger1990} then completed his own proof with the theory of holonomic D-modules.
Later, Wu and Chen \cite{WuChen-2013-NDT}
provided a similar result for the case of bivariate rational functions as a follow-up of Gessel's work.

The problem we address in this paper is to bound
the degrees and orders of linear differential equations satisfied by the diagonal of a given series
in terms of degrees and orders of the given differential systems
that the series satisfies.
We view this as a crucial preliminary step to the computational complexity analysis of algorithms for computing diagonals,
and to the longer-term development of fast algorithms in a complexity-driven way.

Diagonals of multivariate series come in several flavors (see Definition~\ref{DEF:diag}):
first, \emph{primary diagonals} collapse just two variables;
next, \emph{complete diagonals} collapse all variables to a single one.

Starting with primary diagonals, we get a polynomial increase of the order and degree bounds
(Corollary~\ref{cor:bivariate-gessel}).
A naive iteration of primary diagonals (Section~\ref{sec:iterating-prim-diags})
would thus lead to double-exponential bounds for complete diagonals (Section~\ref{sec:iterating-prim-diags}).
Our first and main contribution is therefore to derive a \emph{single-exponential} bound
(Theorems \ref{THM:primary bound} and~\ref{THM:complete bound}).
Note however that in the bivariate case ($n=2$),
no iteration is necessary
so that the double-exponential bound is in fact just polynomial,
and the bounds of Corollary~\ref{cor:bivariate-gessel} are better
than those of Theorems \ref{THM:primary bound} and~\ref{THM:complete bound}.

After Lipshitz's work~\cite{Lipshitz1988},
the general belief was that the gaps in Gessel's proof do not seem easy to fill.
As a secondary contribution,
we however fully fix and generalize Gessel's proof~\cite{Gessel1981} by elaborating on his original proof strategy
(Theorems \ref{THM:diagonal-n} and~\ref{THM:diagonal-2}).
Because Gessel's approach does not need any change of variables,
as opposed to Lipshitz's,
it leads more directly to explicit filtrations,
from which we benefit in our bound estimates of the Lipshitz way.

It is worth comparing the bounds we obtained in this paper with the situation in positive characteristic.
In that context, a result by Furstenberg~\cite{Furstenberg-1967-AFF} and Deligne~\cite{Deligne-1984-ICE}
states that the diagonal of any algebraic function is algebraic.
A~quantitative version of this theorem by Adamczewski and Bell \cite{AdamczewskiBell-2013-DRA}
provides bounds on the algebraic degree of a diagonal
and on the maximal degree (height) of a polynomial equation is satisfies,
which, even in the case of the diagonal of a \emph{rational} function,
is doubly exponential of the form~$O(p^{n^n})$,
where $p$~is the characteristic and $n$~is the number of variables.
As our bounds are singly exponential and might be useful also in characteristic~$p$,
this is another instance of the phenomenon \cite{Salvy-2019-LDE}
that representing an algebraic function by differential equations
is more compact than by a polynomial equation.
The bound in~\cite{AdamczewskiBell-2013-DRA} has very recently been significantly improved in~\cite[Theorem~5.2]{AdamczewskiBostanCaruso-2023-SMC}.

In the case of characteristic zero,
the first bound on the order of an annihilator of the diagonal of a rational power series
was given by Christol~\cite{Christol-1984-DFR}, under a regularity assumption.
In~\cite{BostanLairezSalvy-2013-CTR}, single-exponential bounds were announced for both order and degree, still in the rational case.
Other single-exponential bounds have been announced
for differential operators cancelling Hadamard products of rational series (and therefore diagonals of rational series)
in the extended version~\cite{BostanCarayolKoechlinNicaud-2025-WUP} of a work~\cite{BostanCarayolKoechlinNicaud-2020-WUP} related with a theoretical study on automata:
this indicates the existence of an annihilating operator
satisfying single-exponential bounds on its order, its degree, as well as the height of its coefficients.
Our contribution can therefore be viewed as a generalization of these results on order and degree
to general D-finite power series.

The remainder of this paper is organized as follows.
We recall some basic terminology about rings of differential operators
and introduce D-finite power series and their diagonals in Section~\ref{SECT:dfinite}.
In Section~\ref{SECT:diagonal}, we first prove the Diagonal Theorem (Theorem~\ref{THM:diagonal-n}) on D-finite power series
in the way suggested in Gessel's work
and we then derive an explicit polynomial bound
for annihilators of diagonals in the bivariate case.
Then, a single-exponential bound is given for the general multivariate situation in Section~\ref{SEC:bound}
by analyzing Lipshitz's proof.

\section{Differential operators, D-finiteness, and diagonals}\label{SECT:dfinite}

Throughout this article, we assume that~$K$ is a field of characteristic~$0$.
Let~$K[\vx]$ be the ring of polynomials in $\vx = x_1, \ldots, x_n$
over $K$ and~$K(\vx)$~be the field of rational functions~in~$\vx$ over $K$.
Let~$K[[\vx]]$ be the ring of formal power series~in~$\vx$ over~$K$, which is a domain.
Denote $\cS := K(\vx) \otimes_{K[\vx]} K[[\vx]]$.
Let~$ D_{x_1}, \ldots, D_{x_n}$ denote the usual partial derivations~$\partial/\partial{x_1}, \ldots, \partial/\partial{x_n}$~on~$\cS$.
This is the basic notation that we will use in Sections \ref{SECT:intro} and~\ref{SECT:diagonal},
but it will need to be generalized in Section~\ref{SEC:bound}.

The \emph{Weyl algebra}~$\bW_n$ is the non-commutative polynomial ring in the variables
$\vx =x_1,\dots,x_n$ and $\Dx = D_{x_1}, \ldots, D_{x_n}$, in which the following multiplication rules hold:
$ x_i x_j = x_j x_i, D_{x_i} D_{x_j} = D_{x_j}D_{x_i}$ for all $i, j\in \{1, \ldots, n\}$
and~$D_{x_i} a = a D_{x_i} + \partial a/\partial x_i$ for all~$i\in \{1, \ldots, n\}$ and $a \in K[\vx]$.
We will also write~$K[\vx]\<\Dx>$ for~$\bW_n$,
as, here and throughout, we use angled brackets~$R\<\dots>$ to denote a twisted extension of a ring~$R$,
when generators between brackets always commute with one another.
The Weyl algebra can be interpreted as the ring of linear partial differential operators with
polynomial coefficients.
We will also use the ring $K(\vx)\<\Dx>$ of linear partial differential operators with \emph{rational} function coefficients.
The elements of this ring act on~$\cS$ by interpreting $D_{x_i}$ as $\partial/\partial x_i$, which turns $\cS$ into a left $K(\vx)\<\Dx>$-module.
For a given $f\in \cS$, the \emph{annihilating ideal} of~$f$ in~$K(\vx)\<\Dx>$ is defined as the set $\{L \in K(\vx)\<\Dx>\mid L(f) =0\}$.
Note that this is indeed a left $K(\vx)\<\Dx>$-module,
therefore in particular a vector space over~$K(\vx)$.

\begin{notation}
Given a polynomial~$a \in K[\vx]$ and an operator~$P$ (in~$K[\vx]\<\Dx>$ or in~$K(\vx)\<\Dx>$),
we will distinguish the expression~$P a$, with no parentheses, from the expression~$P(a)$, with parentheses:
the former will always denote the product in the operator algebra;
the latter will always denote application of the operator to~$a$, viewed as a series in~$\cS$.

In contrast, for a series~$f$ in~$\cS$, we will never have to denote a product,
and both $P f$ and~$P(f)$ will denote application.
\end{notation}

\begin{definition}[D-finiteness]\label{DEF:dfinite}
An element $f\in K[[\vx]]$ is \emph{D-finite} over $K(\vx)$
if the $K(\vx)$-vector space generated in~$\cS$ by the derivatives $D_{x_1}^{\alpha_1}\dotsm D_{x_n}^{\alpha_n}(f)$
when $\alpha_1,\dots,\alpha_n$ range over~$\bN$
is finite-dimensional.
\end{definition}

Note that $L(f)$ is also D-finite for any operator $L\in K(\vx)\<\Dx>$.

\begin{definition}[Order and degree]\label{DEF:bound}
Assume that $f\in \cS$ is D-finite over $K(\vx)$. Then for each $i\in \{1, \ldots, n\}$, there exists a non-zero operator $L_i $ in the subalgebra $ K[\vx]\<D_{x_i}>$ of~$ \bW_n $
such that $L_i (f) = 0$.
Write
\begin{equation}\label{eq:L_i}
L_i = \ell_{i,0} + \ell_{i,1}  D_{x_i} + \cdots + \ell_{i,r_i} D_{x_i}^{r_i}
\end{equation}
with $ \ell_{i,0}, \dots \ell_{i,r_i} \in  K[\vx] $ and $ \ell_{i,r_i} \neq 0 $. We call $ r_i $ the
\emph{order} of the operator $ L_i $, denoted by $ \ord (L_i) $. The \emph{degree} of $ L_i $ is defined as the
maximum total degree of its polynomial coefficients: $ \deg(L_i) := \max_{j=0}^{r_i} \tdeg(\ell_{i,j}) $, where $ \tdeg $ means the total degree with respect to $ x_1, \dots, x_n $. Let $ r_f := \max_{i=1}^{n} r_i $ and $ d_f := \max_{i=1}^{n} d_i $ where $ d_i = \deg(L_i) $.
\end{definition}

\begin{definition}\label{DEF:diag}
Let $f= \sum_{i_1,\ldots, i_n \geq 0} a_{i_1,\ldots, i_n} x_1^{i_1}\cdots x_n^{i_n} \in K[[\vx]]$.
We call the power series
\begin{equation*}
\Delta_{1,2}(f) := \sum_{i_1,i_3, \ldots, i_n \geq 0} a_{i_1,i_1,i_3,\ldots, i_n} x_1^{i_1} x_3^{i_3}\cdots x_n^{i_n} \in K[[x_1, x_3,\dots,x_n]]
\end{equation*}
a \emph{primary diagonal} of~$f$.
Other primary diagonals~$\Delta_{i,j}$ are defined similarly,
so that $\Delta_{i,j}(f)$ and~$\Delta_{j,i}(f)$ are the same series
except for the variable names.
A~\emph{diagonal} is defined as any composition of the~$\Delta_{i,j}$.
The \emph{complete diagonal} of~$f$, denoted by~$\Delta(f)$, is defined as
\begin{equation}\label{eq:complete-diag}
\Delta(f) := \Delta_{n,n-1}\Delta_{n-1,n-2} \cdots \Delta_{2,1}(f) = \sum_{i\geq 0} a_{i, \dots, i}x_n^i \in K[[x_n]] .
\end{equation}
By \emph{the} diagonal of~$f$, we mean its complete diagonal
when no ambiguity arises.
\end{definition}

For future reference, we recall here the following well-known consequence of Cramer's rules
that will be used in the subsequent sections.
\begin{lemma}\label{LM:deg}
Let $A = (a_{i, j}) \in K[\vx]^{n \times m}$ be a matrix
with entries of total degree at most~$d$.
Assume the inequality~$n<m$,
so that the matrix has a non-trivial right nullspace.
Then, there exists a non-zero vector $v=(v_1, \ldots, v_m) \in K[\vx]^m$
that solves $A v = 0$ and has total degree at most~$nd$.
\end{lemma}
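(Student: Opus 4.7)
The plan is to produce $v$ explicitly by Cramer's rule applied to a square submatrix of~$A$ of size at most $n \times n$, after first reducing to the case where the matrix has full row rank. The key point is that a determinant of an $r \times r$ matrix with entries of total degree at most~$d$ has total degree at most~$rd$, so controlling the size of the submatrix controls the degree of~$v$.

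First I would handle the trivial case: if $r := \rank_{K(\vx)}(A) = 0$, then $A$ is the zero matrix and $v = (1, 0, \ldots, 0)$ works, with degree zero. So assume $r \geq 1$. I would then select a set $I \subseteq \{1, \ldots, n\}$ of $r$ row indices such that the corresponding rows of~$A$ are $K(\vx)$-linearly independent, yielding $A' \in K[\vx]^{r \times m}$. Since the rows of $A$ outside~$I$ are $K(\vx)$-linear combinations of rows in~$I$, any right nullvector of~$A'$ is automatically a right nullvector of~$A$. Next, I would pick $r$ column indices of~$A'$ whose columns are $K(\vx)$-linearly independent, together with one extra column index (possible because $r \leq n < m$ implies $m - r \geq 1$), to form a submatrix $A'' \in K[\vx]^{r \times (r+1)}$ of rank~$r$.

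The core computation is then standard Cramer/cofactor expansion on~$A''$: for $j = 1, \ldots, r+1$, set $w_j := (-1)^j \det(A''_{\hat{j}})$, where $A''_{\hat{j}}$ denotes $A''$ with its $j$-th column removed. The identity $A'' w = 0$ follows by observing that the $i$-th coordinate of $A''w$ equals the Laplace expansion, along a duplicated $i$-th row, of the determinant of an $(r+1) \times (r+1)$ matrix with two equal rows; that determinant vanishes. Because $A''$ has rank~$r$, at least one of the $r \times r$ minors~$\det(A''_{\hat{j}})$ is non-zero, so $w \neq 0$. Finally, I would extend $w$ back to a vector $v \in K[\vx]^m$ by placing $w_j$ at the position corresponding to the $j$-th selected column and setting the other entries to zero; this gives $Av = 0$.

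The degree bound is then immediate: each~$w_j$ is the determinant of an $r \times r$ matrix whose entries have total degree at most~$d$, hence $\tdeg(w_j) \leq rd \leq nd$. There is no serious obstacle in this argument; the only care needed is the rank-reduction step, to avoid the naive choice of an $n \times (n+1)$ submatrix whose Cramer vector could vanish when the rank is strictly less than~$n$.
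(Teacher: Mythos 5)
Your proposal is correct and follows essentially the same approach as the paper: reduce to a $\rho\times\rho$ (or $\rho\times(\rho+1)$) submatrix of full rank, solve by Cramer's rule to get degree at most $\rho d\leq nd$, and pad with zeros. The only cosmetic differences are your explicit handling of the rank-zero case and phrasing the Cramer step via signed minors and a duplicated-row determinant rather than as $Cw=-c$.
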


\begin{proof}
Let $\rho$ denote the rank of $A$.
Because $\rho \leq n$, we can fix $\rho$ linearly independent rows of $A$
and form a $\rho\times m$ submatrix $B$ of $A$ of rank $\rho$.
In turn, consider $\rho$ linearly independent columns of $B$,
thus forming a $\rho\times \rho$ submatrix~$C$,
and an additional column $c$ of $B$.
The system $Cw = -c$ admits a non-zero solution $w$
with $\tdeg (w) \leq \rho d$
that can be expressed by Cramer's rules.
Padding $w$ with zeros results in a non-zero $v$
satisfying $Av=0$ and $\tdeg (v) \leq \rho d \leq n d$ as wanted.
\end{proof}

\section{Diagonal theorem in the multivariate case}\label{SECT:diagonal}
In this section, we give a proof of the following “Diagonal theorem” in the spirit of Gessel~\cite{Gessel1981}.

\begin{theorem}[Diagonal Theorem]\label{THM:diagonal-n}
Let $f\in K[[\vx]]$ be D-finite over $K(\vx)$.
Then $\Delta(f)\in K[[x_n]]$ is D-finite over~$K(x_n)$.
\end{theorem}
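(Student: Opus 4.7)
The plan is to factor the complete diagonal as $\Delta = \Delta_{n,n-1} \circ \cdots \circ \Delta_{2,1}$ as in Definition~\ref{DEF:diag}, and to prove that D-finiteness is preserved under each primary diagonal step. Induction on the number of variables then yields the result, with the $m=1$ base case trivial. The whole proof thus reduces to the following key claim: if $h \in K[[\vy]]$ is D-finite over $K(\vy)$ in $m$ variables $y_1, \ldots, y_m$, then the primary diagonal $\Delta_{1,2}(h) \in K[[y_1, y_3, \ldots, y_m]]$ is D-finite over $K(y_1, y_3, \ldots, y_m)$.

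For this primary-diagonal step I would follow Gessel's strategy of working directly on the coefficient recurrences, without introducing any change of variables. Writing $h = \sum a_{\vj} \vy^{\vj}$, each annihilating operator $L \in \ann_{\bO_m}(h)$ translates into a linear recurrence on the coefficients $a_{\vj}$. Using the univariate-in-each-derivation operators $L_i \in K[\vy]\<D_{y_i}>$ provided by Definition~\ref{DEF:bound}, and in particular $L_1$ and $L_2$, one obtains recurrences relating each $a_{\vj}$ to coefficients with nearby index vectors. Packaging the off-diagonal coefficients into the ``shifted'' power series
\begin{equation*}
S_k := \sum_{j_1, j_3, \ldots, j_m \geq 0} a_{j_1 + \max(k,0),\, j_1 + \max(-k,0),\, j_3, \ldots, j_m} \, y_1^{j_1} y_3^{j_3} \cdots y_m^{j_m} \quad (k \in \bZ),
\end{equation*}
so that $S_0 = \Delta_{1,2}(h)$, these recurrences become $K(y_1, y_3, \ldots, y_m)$-linear relations among the $S_k$ and their partial derivatives. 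The goal is to show that, modulo partial derivatives of $S_0$, only finitely many $S_k$ are $K(y_1, y_3, \ldots, y_m)$-linearly independent; substituting the resulting expressions back into the ambient recurrences then yields non-zero annihilators of $S_0 = \Delta_{1,2}(h)$ in each derivation, which is exactly D-finiteness.

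The main obstacle --- and the location of the gap in Gessel's original proof --- is this finiteness claim on the family $\{S_k\}_{k \in \bZ}$. The recurrences couple $S_k$ to $S_{k \pm 1}, S_{k \pm 2}, \ldots$ with coefficients in $K(y_1, y_3, \ldots, y_m)$ whose degrees and pole loci can a priori grow with $|k|$, so a naive iterative elimination does not produce a uniform spanning set. To close the argument I would invoke Lemma~\ref{LM:deg} to bound uniformly in $k$ the total degrees of the polynomial coefficients that appear when re-expressing $S_k$ in terms of $S_0$ and its derivatives, and combine this with a careful filtration of $\ann_{\bO_m}(h)$ by order and by degree. This filtration control, which Gessel had left implicit, is what forces the spanning module to be finite-dimensional over $K(y_1, y_3, \ldots, y_m)$, and is precisely the input needed later in the paper to derive the explicit order and degree bounds for primary diagonals.
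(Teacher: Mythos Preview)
Your outer reduction matches the paper exactly: Theorem~\ref{THM:diagonal-n} is obtained by iterating Theorem~\ref{THM:diagonal-2} along the factorization $\Delta = \Delta_{n,n-1}\circ\cdots\circ\Delta_{2,1}$. All the content lies in the primary-diagonal step, and that is where your proposal has a genuine gap.

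You correctly identify that the hard point is the finiteness claim on the family $\{S_k\}_{k\in\bZ}$, but you do not actually prove it. Saying you ``would invoke Lemma~\ref{LM:deg} to bound uniformly in $k$ the total degrees'' and ``combine this with a careful filtration'' is not a proof: Lemma~\ref{LM:deg} gives degree bounds for a kernel vector of an underdetermined polynomial system, but you never set up such a system for the $S_k$, nor explain why the bound is uniform in~$k$, nor why uniform degree bounds would force finite dimensionality of the span. This is exactly the kind of implicit step that Lipshitz flagged as incomplete in the earlier proofs, so hand-waving past it here is not acceptable. Incidentally, the $S_k$/coefficient-recurrence picture you attribute to Gessel is not what the paper calls ``Gessel's approach''.

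The paper's argument for Theorem~\ref{THM:diagonal-2} is structurally different and avoids the $S_k$ altogether. It works in the operator algebra, not on coefficients: by a dimension count (Theorem~\ref{THM:elim}, using the filtrations $\cF_{d,r}$, $\cH_{d,r}$ of Definition~\ref{DEF:C} and Lemmas~\ref{LM:reduce-1}--\ref{LM:reduce-2}) one produces a non-zero annihilator $P$ of $f$ lying in the special subalgebra $K[x_1x_2,x_3,\dots,x_n]\langle T_{x_1,x_2},D_{x_1,x_2}\rangle$, and similarly $Q_h\in K[x_1x_2,x_3,\dots,x_n]\langle T_{x_1,x_2},D_{x_h}\rangle$. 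The commutation rules of Proposition~\ref{PROP:diagonal} then let one push $\Delta_{1,2}$ through these operators: after stripping the maximal left factor $T_{x_1,x_2}^s$ (Lemma~\ref{LEM:xy}), the constant-in-$T_{x_1,x_2}$ part becomes a non-zero operator in $K(x_1,x_3,\dots,x_n)\langle D_{x_1}\rangle$ (resp.\ $\langle D_{x_h}\rangle$) applied to $\Delta_{1,2}(f)$, equal to a D-finite series~$g$, and one composes with an annihilator of~$g$. The finiteness here is immediate from the dimension comparison $\dim V_N>\dim W_N$ (Observation~\ref{LM:ineq1}), with no need to control an infinite family like your~$S_k$.
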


The proof of Theorem~\ref{THM:diagonal-n} is just an iteration of the following result
for primary diagonals.

\begin{theorem}\label{THM:diagonal-2}
	Let $f\in K[[\vx]]$ be D-finite over $K(\vx)$. Then $\Delta_{1,2}(f)$
	is D-finite over $K(x_1, x_3,\dots,x_n)$.
\end{theorem}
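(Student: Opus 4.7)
My plan is to reduce the statement to a coefficient-extraction problem via a rational change of variables, in the spirit of Lipshitz's approach. (The authors of this paper will instead follow Gessel's route, which avoids any substitution by working directly with a well-chosen filtration on~$f$ and the subalgebra of $\bO_n$ through which $\Delta_{1,2}$ descends naturally.) Concretely, set
\begin{equation*}
  g(u, t, x_3, \ldots, x_n) \;:=\; f\bigl(u^{-1} t,\, u,\, x_3, \ldots, x_n\bigr),
\end{equation*}
which is well-defined in $K((u))[[t, x_3, \ldots, x_n]]$: each monomial $x_1^{i_1} x_2^{i_2} x_3^{i_3} \cdots x_n^{i_n}$ of~$f$ contributes $u^{i_2 - i_1} t^{i_1} x_3^{i_3} \cdots x_n^{i_n}$, and for fixed $t$-degree~$i_1$ at most $i_1$ negative powers of~$u$ appear. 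Term-by-term expansion then gives
\begin{equation*}
  \Delta_{1,2}(f)(t, x_3, \ldots, x_n) \;=\; [u^0]\, g(u, t, x_3, \ldots, x_n),
\end{equation*}
so the task reduces to showing that this constant-in-$u$ coefficient of~$g$ is D-finite in the remaining variables.

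Next I would proceed in two stages. First, transport D-finiteness from~$f$ to~$g$: the substitution $(x_1, x_2) \mapsto (u^{-1} t,\, u)$ is algebraic, so the chain rule rewrites $D_{x_1}, D_{x_2}$ (acting on~$g$) as $K[u, u^{-1}, t]$-linear combinations of $D_u$ and~$D_t$, while each $D_{x_j}$ for $j \geq 3$ is unchanged; any annihilating operator of~$f$ in~$\bW_n$ thus pulls back, after clearing powers of~$u$, to an annihilator of~$g$ in a suitable Ore algebra over $K(u, t, x_3, \ldots, x_n)$. Second, derive D-finiteness of $[u^0]\,g$: for each $y \in \{t, x_3, \ldots, x_n\}$, starting from a non-zero operator $L \in K[u, t, x_3, \ldots, x_n]\langle D_u, D_y\rangle$ annihilating~$g$, I would use the identities $[u^0]\, D_u^m h = m!\,[u^m]\,h$ and $[u^0]\, u^k h = [u^{-k}]\,h$ to iteratively eliminate~$u$ and~$D_u$ from $L g = 0$, ultimately producing a non-zero operator in $K(t, x_3, \ldots, x_n)\langle D_y\rangle$ that annihilates $[u^0]\,g = \Delta_{1,2}(f)$.

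The main obstacle will be the second stage. Extracting $[u^0]$ from $L g = 0$ initially yields an infinite family of linear relations among the slices $[u^j]\,g$ and their $D_y$-derivatives; one must argue that the $K(t, x_3, \ldots, x_n)$-span of $\{D_y^k\,[u^0]\,g : k \geq 0\}$ is finite-dimensional, which requires a telescoping or elimination argument to discard the dependence on the slices $[u^j]\,g$ for $j \neq 0$. By contrast, the authors' Gessel-style approach should bypass this complication by identifying \emph{a priori} a finite family of annihilating operators for~$f$ that descend cleanly through~$\Delta_{1,2}$: a natural candidate is the subalgebra of $\bO_n$ generated by $x_1 x_2$, $x_3, \ldots, x_n$, $x_1 D_{x_1}$, $x_2 D_{x_2}$, and $D_{x_3}, \ldots, D_{x_n}$, since one verifies easily $\Delta_{1,2}(x_1 x_2\,h) = x_1\,\Delta_{1,2}(h)$ and $\Delta_{1,2}(x_1 D_{x_1}\,h) = \Delta_{1,2}(x_2 D_{x_2}\,h) = x_1 D_{x_1}\,\Delta_{1,2}(h)$, with analogous identities for the remaining generators; restriction to this subalgebra trades the rational-coefficient freedom inherent in~$\bO_n$ for an explicit filtration directly compatible with~$\Delta_{1,2}$.
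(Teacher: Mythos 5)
Your proposal follows the Lipshitz route (the paper uses this route in Section~\ref{SEC:bound}, not in the proof of Theorem~\ref{THM:diagonal-2}, which follows Gessel). Unfortunately, what you write is a plan rather than a proof, and the plan has a genuine gap precisely at the step you flag as ``the main obstacle.'' Transporting D-finiteness from~$f$ to $g = f(u^{-1}t, u, x_3,\dots,x_n)$ by the chain rule only produces annihilators of~$g$ whose coefficients involve~$u$; extracting $[u^0]$ from such an equation $L(g)=0$ mixes infinitely many slices $[u^j]g$ and yields no closed equation for $[u^0]g$. The identities $[u^0]\,D_u^m h = m!\,[u^m]h$ and $[u^0]\,u^k h = [u^{-k}]h$ shift slices around but do not decouple them, and ``iteratively eliminate~$u$ and~$D_u$'' is not a proof step. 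The missing idea is the one at the heart of Lipshitz's argument: a counting/linear-algebra step (the analogue of the paper's Lemma~\ref{LEM:exists-P-alpha} / Lemma~\ref{LEM:exists-P-h-alpha}, using a filtration such as Lemma~\ref{LM:reduce-2}) that manufactures a non-zero annihilator of~$g$ whose coefficients are \emph{free of~$u$}, i.e., an operator $P = \sum_{j\geq\alpha} P_j(t,\hat x; D_y)\,D_u^j$ with $P_\alpha \neq 0$ and $P_j \in K[t,x_3,\dots,x_n]\langle D_y\rangle$. Only for such~$P$ does a single coefficient extraction isolate one slice (Lemma~\ref{LEM:coeff-extraction-univ}).

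Two further issues. First, even with a $u$-free~$P$, extracting $[u^{-1-\alpha}]$ from $P(g)=0$ yields an annihilator of $[u^{-1}]g$, not of $[u^0]g$: the falling factorial $i(i-1)\cdots(i-j+1)$ vanishes for $i=0$, $j\geq 1$, so the clean ``only one $j$ survives'' phenomenon is pinned to the slice $i=-1$. This is exactly why the paper works with $\sigma(f)=\tau(f)/s$ rather than $\tau(f)$; in your setup you should replace~$g$ by~$g/u$ so that $\Delta_{1,2}(f) = [u^{-1}](g/u)$. Second, your sketch of the Gessel-style alternative names a subalgebra that is too large and misses the crucial generator: the paper uses $T_{x_1,x_2} = \theta_{x_1}-\theta_{x_2}$ (not $\theta_{x_1}$ and $\theta_{x_2}$ separately) together with $D_{x_1,x_2}=D_{x_1}D_{x_2}$ (which you omit), and the essential structural fact is $\Delta_{1,2}\circ T_{x_1,x_2}=0$ (Proposition~\ref{PROP:diagonal}\eqref{IT:Delta-Txx}), which lets one split off the $T$-free part via Lemma~\ref{LEM:xy} and reduce to the D-finiteness of a series in $n-1$ variables via Lemma~\ref{LEM:dfinite-alg}. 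In both the Gessel and the Lipshitz route, a dimension-count producing a structured annihilator in a proper subalgebra is the non-trivial engine, and your proposal leaves it out.
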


The rest of the present section is devoted to the proof of Theorem~\ref{THM:diagonal-2}.

\medskip

The following objects will serve as generators in relevant algebras:
\begin{equation*}
\begin{aligned}
D_{x_1, x_2} &:= D_{x_1}D_{x_2} , \\
\theta_{x_i} &:= x_i D_{x_i} \quad\text{for each $i \in \lbrace 1,\dots,n \rbrace$,} \\
T_{x_1, x_2} &:= \theta_{x_1}-\theta_{x_2} .
\end{aligned}
\end{equation*}
We use bold notation to abbreviate monomials:
for example,
$\vx^\aalpha$~denotes $x_1^{\alpha_1} \dots x_n^{\alpha_n}$ and $\vD_x^\bbeta$~denotes $D_{x_1}^{\beta_1}\dots D_{x_n}^{\beta_n}$.
By ~\cite[Proposition 2.1]{Coutinho1995}, the set $\{\vx^\aalpha  \vD_x^\bbeta \mid \aalpha, \bbeta \in \bN^n\}$ is a basis of $\bW_n = K[\vx]\<\vD_x>$ as a vector space over~$K$.
Similarly, Lemmas \ref{LM:basis}, \ref{LM:basis-2}, and~\ref{LM:basis-3}
are lemmas providing canonical bases for several subalgebras of~$\bW_n$:
$K[x_1 x_2, x_3, \dots, x_n]\<T_{x_1, x_2}, D_{x_1, x_2}>$,
$K[x_1, x_3, \dots, x_n]\<D_{x_1}>$,
and $K[x_1 x_2, x_3, \dots, x_n]\<T_{x_1, x_2}, D_{x_h}>$
for $h \in \{3, \dots, n\}$.

\begin{lemma} \label{LM:basis}
The set
\begin{equation*}
\{(x_1 x_2)^i x_3^{k_3} \cdots x_n^{k_n} T_{x_1, x_2}^j D_{x_1, x_2}^{\ell} \mid i, j, \ell, k_3,\ldots, k_n \in \bN\}
\end{equation*}
is a basis of $K[x_1 x_2, x_3, \dots, x_n]\<T_{x_1, x_2}, D_{x_1, x_2}>$ as a vector space over~$K$.
\end{lemma}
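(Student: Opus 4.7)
\noindent The plan is to prove the basis statement in two parts: $K$-linear independence of the proposed set in~$\bW_n$, and the spanning of the subalgebra by these same elements.

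For linear independence, I would pass to the associated graded ring of $\bW_n$ under the filtration by total order of differential operators; this graded ring is the commutative polynomial ring $K[\vx, \vy] = K[x_1, \dots, x_n, y_1, \dots, y_n]$, with $y_i$ the principal symbol of~$D_{x_i}$. A typical basis element $(x_1 x_2)^i x_3^{k_3} \cdots x_n^{k_n} T_{x_1, x_2}^j D_{x_1, x_2}^{\ell}$ has total order $j + 2\ell$ and principal symbol $(x_1 x_2)^i x_3^{k_3} \cdots x_n^{k_n} (x_1 y_1 - x_2 y_2)^j (y_1 y_2)^{\ell}$ in $K[\vx, \vy]$. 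The crux is that $x_1 x_2$, $y_1 y_2$, $x_1 y_1 - x_2 y_2$, together with $x_3, \dots, x_n$, are algebraically independent over~$K$ in $K[\vx, \vy]$. I would check this by the Jacobian criterion: the $3\times 3$ submatrix of the Jacobian of $(x_1 x_2,\, y_1 y_2,\, x_1 y_1 - x_2 y_2)$ in the columns $\partial/\partial x_1$, $\partial/\partial y_1$, $\partial/\partial y_2$ has determinant $-x_2(x_1 y_1 + x_2 y_2) \neq 0$, forcing the three polynomials in $x_1, x_2, y_1, y_2$ to be algebraically independent; extending with the disjoint variables $x_3,\dots,x_n$ preserves this. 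Algebraic independence implies that the monomials in these generators are $K$-linearly independent in $K[\vx, \vy]$, and a standard filtration argument (extracting top-order terms of any putative linear dependence in $\bW_n$) then lifts the independence back to the original operators.

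For the spanning part, I would rely on the commutation rules in~$\bW_n$: the variables $x_3, \dots, x_n$ commute with every other generator; $T_{x_1, x_2}$ commutes with both $x_1 x_2$ and $D_{x_1, x_2}$, both of which follow by direct computation from the definitions; and the one remaining nontrivial identity is $[D_{x_1, x_2},\, x_1 x_2] = \theta_{x_1} + \theta_{x_2} + 1$. Using the commuting relations, any product of generators is first rearranged into the shape $x_3^{k_3} \cdots x_n^{k_n}\, T_{x_1, x_2}^j \cdot W$, where $W$ is a word in $x_1 x_2$ and $D_{x_1, x_2}$ alone, and $W$ is then driven toward the normal form $(x_1 x_2)^i D_{x_1, x_2}^{\ell}$ by iterated application of the last relation.

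The main obstacle is the spanning step, specifically the handling of the stray operator $\theta_{x_1} + \theta_{x_2}$ produced by $[D_{x_1, x_2},\, x_1 x_2]$, since this operator is not one of the normal-form generators on its own. The relevant identity, verified by direct expansion, is $(\theta_{x_1} + \theta_{x_2})^2 = T_{x_1, x_2}^2 + 4\,(x_1 x_2)\, D_{x_1, x_2}$, which expresses the square of the stray operator inside the proposed basis. Closing the spanning argument thus requires a careful induction on total order, tracking how the factors $\theta_{x_1} + \theta_{x_2}$ arising at each reduction step combine so that this quadratic relation absorbs them back into the normal form; this bookkeeping is where I expect the technical bulk of the proof to sit.
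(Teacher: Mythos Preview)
Your linear-independence argument via the associated graded ring is correct and is close in spirit to the paper's proof, which also passes to symbols: the paper picks a lexicographic monomial order on~$\bW_2$, shows that the leading monomial of $(x_1x_2)^iT_{x_1,x_2}^jD_{x_1,x_2}^\ell$ is $x_1^{i+j}x_2^iD_{x_1}^{j+\ell}D_{x_2}^\ell$, and observes that $(i,j,\ell)\mapsto(i+j,i,j+\ell,\ell)$ is injective. Your Jacobian check achieves the same end with less bookkeeping.

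The spanning part, however, is based on a misreading of the object, and the induction you outline cannot be completed because the statement you are aiming at is false. The commutator you correctly compute,
\[
[D_{x_1,x_2},\,x_1x_2]=\theta_{x_1}+\theta_{x_2}+1,
\]
already shows that $\theta_{x_1}+\theta_{x_2}$ lies in the subalgebra of~$\bW_n$ generated by $x_1x_2,\,x_3,\dots,x_n,\,T_{x_1,x_2},\,D_{x_1,x_2}$; but $\theta_{x_1}+\theta_{x_2}$ does \emph{not} lie in the $K$-span of the proposed monomials. Indeed, the only normal-form monomials of differential order~$1$ are the $(x_1x_2)^iT_{x_1,x_2}=x_1^{i+1}x_2^iD_{x_1}-x_1^ix_2^{i+1}D_{x_2}$, and matching the $D_{x_1}$-part of $x_1D_{x_1}+x_2D_{x_2}$ forces the coefficient of $T_{x_1,x_2}$ to be~$1$, which then gives the wrong sign on the $D_{x_2}$-part. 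Your quadratic identity $(\theta_{x_1}+\theta_{x_2})^2=T_{x_1,x_2}^2+4(x_1x_2)D_{x_1,x_2}$ only places the \emph{square} in the span, and no amount of induction can recover the element itself.

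What is really going on is that the paper's notation $K[x_1x_2,x_3,\dots,x_n]\langle T_{x_1,x_2},D_{x_1,x_2}\rangle$ denotes, by their convention on angled brackets, the set of operators of the form $\sum p_{j,\ell}(x_1x_2,x_3,\dots,x_n)\,T_{x_1,x_2}^jD_{x_1,x_2}^\ell$: spanning is by definition, and the paper's proof accordingly says ``It suffices to show linear independence'' and proves nothing else. (The word ``subalgebra'' in the preamble to the lemma is informal; the span is genuinely not multiplicatively closed, as the example above shows.) So drop the spanning section entirely and keep only your graded/Jacobian argument for linear independence.
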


\begin{proof}
It suffices to show that the monomials $(x_1 x_2)^i T_{x_1, x_2}^j D_{x_1, x_2}^{\ell}$ are linearly independent over~$K$.
Suppose that
\begin{equation*}
L = \sum_{(i,j,\ell) \in \Lambda} c_{i, j, \ell}\ (x_1 x_2)^i T_{x_1, x_2}^j D_{x_1, x_2}^{\ell} = 0
\end{equation*}
for some non-empty finite set $ \Lambda $ and $c_{i, j, \ell} \in K \setminus \{0\}$.
Let $\succ$ be the lexicographical order on the algebra $\bW_2 = K[x_1,x_2]\<D_{x_1}, D_{x_2}>$ with $D_{x_1} \succ D_{x_2} \succ x_2 \succ x_1$.
For any non-zero element~$Q$ of~$\bW_2$,
we write~$\lm(Q)$ to denote its leading monomial,
that is, the highest monomial with respect to~$\succ$ occurring in~$Q$ with a non-zero coefficient.
It can be proved by induction that there exist $Q_1$ and~$Q_2$ in~$\bW_2$ such that
\begin{equation*}
T^j_{x_1, x_2} = \theta_{x_1}^j + Q_1
  = x_1^j D_{x_1}^j  + Q_2 \quad \text{and} \quad \lm (Q_1) < \lm (\theta_{x_1}^j), \ \lm (Q_2) < x_1^j D_{x_1}^j .
\end{equation*}
Then we have that there exists~$Q_3$ in~$\bW_2$ such that
\begin{equation*}
(x_1 x_2)^i T_{x_1, x_2}^j D_{x_1, x_2}^{\ell} = x_1^{i+j}  x_2^i D_{x_1}^{j+\ell}D_{x_2}^\ell  + Q_3 \quad \text{and} \quad \lm (Q_3) < x_1^{i+j}  x_2^i D_{x_1}^{j+\ell}D_{x_2}^\ell .
\end{equation*}
Note that the map $(i,j,\ell) \mapsto (i+j,i, j+\ell, \ell)$ is injective.
Since the set $\{\vx^\vi \Dx^\vj \mid \vi, \vj \in \bN^n\}$ is a basis of $\bW_n = K[\vx]\<\Dx>$ as a vector space over $K$, this forces all $ c_{i,j, \ell} = 0 $, which contradicts our assumption.
\end{proof}

\begin{lemma} \label{LM:basis-2}
	The set $$\{x_1^k x_3^{k_3} \cdots x_n^{k_n} D_{x_1}^{\ell} \mid k,  k_3,\ldots, k_n, \ell\in \bN\}$$ is a basis of $K[x_1, x_3, \ldots, x_n]\<D_{x_1}>$ as a vector space over~$K$.
\end{lemma}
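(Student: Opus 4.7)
The plan is to follow the same template as the proof of Lemma~\ref{LM:basis}, but the situation here is considerably simpler because the algebra in question contains only the ``plain'' generators $x_1, x_3, \dots, x_n$ and $D_{x_1}$ (no compound operator such as $T_{x_1,x_2}$), and all of $x_3,\dots,x_n$ commute with both $x_1$ and~$D_{x_1}$.

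First I would dispose of spanning. The algebra $K[x_1, x_3, \dots, x_n]\<D_{x_1}>$ is by definition generated over $K$ by $x_1, x_3, \dots, x_n, D_{x_1}$. The only non-trivial commutation is $D_{x_1} x_1 = x_1 D_{x_1} + 1$, while $D_{x_1}$ commutes with each~$x_j$ for $j \geq 3$ and all the $x_i$'s commute with one another. A straightforward normal-form reduction (repeatedly push every $x_i$ to the left of every~$D_{x_1}$ using $D_{x_1} x_1 = x_1 D_{x_1} + 1$) therefore rewrites any element of the algebra as a finite $K$-linear combination of monomials of the stated form.

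For linear independence I would simply invoke the basis of $\bW_n$ already cited in the paper. Each proposed monomial $x_1^k x_3^{k_3}\cdots x_n^{k_n} D_{x_1}^{\ell}$ is exactly $\vx^\aalpha \vD_x^\bbeta$ for the multi-indices $\aalpha = (k,0,k_3,\dots,k_n)$ and $\bbeta = (\ell, 0, \dots, 0)$, and the assignment $(k, k_3, \dots, k_n, \ell) \mapsto (\aalpha, \bbeta)$ is manifestly injective. Hence the proposed family is a subset of the $K$-basis $\{\vx^\aalpha \vD_x^\bbeta \mid \aalpha,\bbeta \in \bN^n\}$ of~$\bW_n$ from \cite[Proposition~2.1]{Coutinho1995}, and is therefore linearly independent over~$K$ in~$\bW_n$, a fortiori in our subalgebra.

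I do not anticipate any real obstacle: the compound operators that made Lemma~\ref{LM:basis} delicate (the need to identify the leading monomial of $T_{x_1,x_2}^j$ and show that the resulting indexing map is injective) simply do not appear here, so the argument reduces to quoting the Weyl-algebra basis. Writing ``$ \lm(Q) < \cdots$'' type estimates is unnecessary.
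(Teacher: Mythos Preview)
Your proposal is correct and follows essentially the same approach the paper has in mind: the paper omits the proof, saying it is ``very similar to the proof of Lemma~\ref{LM:basis},'' and your argument is exactly that template specialized to the present, simpler situation. Your observation that the monomials $x_1^k x_3^{k_3}\cdots x_n^{k_n} D_{x_1}^{\ell}$ are already standard Weyl-algebra basis elements $\vx^{\aalpha}\Dx^{\bbeta}$ (so no leading-monomial analysis is needed) is the right simplification, and the spanning argument by normal-form reduction matches how the paper treats the analogous step implicitly in Lemma~\ref{LM:basis}.
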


\begin{lemma} \label{LM:basis-3}
	For each $ h \in \{ 3,\dots, n \} $, the set $$\{(x_1 x_2)^i x_3^{k_3} \cdots x_n^{k_n} T_{x_1, x_2}^j D_{x_h}^{\ell} \mid i, j, \ell, k_3,\ldots, k_n \in \bN\}$$ is a basis of $K[x_1 x_2, x_3, \dots, x_n]\<T_{x_1, x_2}, D_{x_h}>$ as a vector space over~$K$.
\end{lemma}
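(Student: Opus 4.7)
The plan is to follow the structure of the proof of Lemma~\ref{LM:basis}: first verify that the proposed set spans the subalgebra using commutation relations, then establish linear independence by a leading-term argument relative to the standard basis $\{\vx^\aalpha \vD_x^\bbeta\}$ of~$\bW_n$.

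For spanning, I will exploit that $T_{x_1,x_2}$ commutes with $x_1 x_2$ (a quick computation gives $[\theta_{x_i}, x_1 x_2] = x_1 x_2$ for $i=1,2$, so the difference vanishes) and with each $x_k$ for $k \geq 3$. Moreover, $D_{x_h}$ commutes with $T_{x_1,x_2}$, with $x_1 x_2$, and with every $x_k$ for $k \neq h$, so that the only non-trivial commutator among the generators of the subalgebra is $[D_{x_h}, x_h] = 1$. Hence any word in these generators can be rearranged into a $K$-linear combination of the proposed monomials.

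For linear independence, I will equip $\bW_n$ with a lexicographic order in which $D_{x_1}$ is the largest generator, $D_{x_h}$ comes next, followed by the remaining derivations and then the variables. Using the expansion $T_{x_1,x_2}^j = x_1^j D_{x_1}^j + R_j$ with $\lm(R_j) \prec x_1^j D_{x_1}^j$ already established in the proof of Lemma~\ref{LM:basis}, and the fact that the factors $x_3^{k_3} \cdots x_n^{k_n} D_{x_h}^{\ell}$ commute with the $\bW_2$-part (modulo the single scalar $[D_{x_h}, x_h] = 1$, which only produces terms with smaller leading monomial), I expect to obtain
\begin{equation*}
(x_1 x_2)^i x_3^{k_3} \cdots x_n^{k_n} T_{x_1,x_2}^j D_{x_h}^{\ell}
 = x_1^{i+j} x_2^{i} x_3^{k_3} \cdots x_n^{k_n} D_{x_1}^j D_{x_h}^{\ell} + Q
\end{equation*}
with $\lm(Q)$ strictly smaller. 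Since the map $(i,j,\ell,k_3,\ldots,k_n) \mapsto (i+j, i, k_3, \ldots, k_n, j, \ell)$ is injective — the target tuple determines $i, k_3, \ldots, k_n, j, \ell$ directly, then $i+j-j = i$ — distinct monomials in the proposed set yield distinct leading Weyl-algebra monomials. Any putative $K$-linear dependence is therefore forced to be trivial.

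The main obstacle is essentially bookkeeping: arranging the lex order so that the leading term of $T_{x_1,x_2}^j$ remains $x_1^j D_{x_1}^j$ even after the introduction of the extra generators $D_{x_h}$ and $x_3,\ldots,x_n$, and verifying that the commutation $[D_{x_h}, x_h] = 1$ only produces strictly lower-order contributions. Once these checks are in place, the result follows from Lemma~\ref{LM:basis} with $D_{x_h}$ playing a largely passive role, since it commutes with the whole $\bW_2$-subalgebra generated by $x_1, x_2, D_{x_1}, D_{x_2}$.
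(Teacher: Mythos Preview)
Your proposal is correct and follows essentially the same approach as the paper, which omits the proof with the remark that it is ``very similar to the proof of Lemma~\ref{LM:basis}.'' Your leading-term argument with a lexicographic order placing $D_{x_1}$ first is exactly the intended adaptation; note that in the linear-independence step you never actually need to commute $D_{x_h}$ past $x_h$, since the factor $x_3^{k_3}\cdots x_n^{k_n} D_{x_h}^\ell$ is already in standard form and commutes with the $\bW_2$-part.
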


We omit the proofs of Lemma \ref{LM:basis-2} and \ref{LM:basis-3} that are very similar to the proof of Lemma \ref{LM:basis}.
Next we present some commutation rules between the diagonal operator $\Delta_{1,2}$ and the operators~$x_1x_2, D_{x_1, x_2}, \theta_{x_i}$ and $T_{x_1, x_2}$.

\begin{proposition}\label{PROP:diagonal}
For any power series $f(\vx)\in K[[\vx]]$, we have
\begin{enumerate}
\item\label{IT:Delta-xx}
  $\Delta_{1,2}(x_1 x_2 f) = x_1 \Delta_{1,2}(f) $;
\item\label{IT:Delta-Dxx}
  $\Delta_{1,2}(D_{x_1, x_2}(f)) = D_{x_1}\theta_{x_1}(\Delta_{12}(f)) $;
\item\label{IT:Delta-theta1}
  $\Delta_{1,2}(\theta_{x_1}(f)) = \theta_{x_1}(\Delta_{1,2}(f)) $;
\item\label{IT:Delta-theta2}
  $\Delta_{1,2}(\theta_{x_2}(f)) = \theta_{x_1}(\Delta_{1,2}(f)) $;
\item\label{IT:Delta-Txx}
  $\Delta_{1,2}(T_{x_1, x_2}(f)) = 0 $;
\item\label{IT:Txx-Dxx}
  $D_{x_1, x_2}T_{x_1, x_2} = T_{x_1, x_2}D_{x_1, x_2}$;
\item\label{IT:Txx-xx}
  $T_{x_1, x_2} \ x_1 x_2 = x_1 x_2 \ T_{x_1, x_2}$.
\end{enumerate}
\end{proposition}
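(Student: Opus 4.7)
The plan is to split the seven items into two groups. Items (1)–(5) concern the action of the diagonal operator $\Delta_{1,2}$ and will be proved by direct coefficient bookkeeping on power series, whereas items (6) and (7) are purely algebraic identities in $\bW_n$ that follow from the basic Weyl-algebra commutation rule $D_{x_i} x_i = x_i D_{x_i} + 1$.

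For items (1)–(5), expand $f = \sum_{\vi \in \bN^n} a_\vi \vx^\vi$, apply the indicated operator term-by-term, and then extract the summand indexed by $i_1 = i_2$. For item~(1), multiplying by $x_1 x_2$ shifts both of these indices by $+1$, so the resulting diagonal subset of coefficients is the same as that of $f$ but the monomial carries an extra factor~$x_1$. For items (3) and~(4), $\theta_{x_1}$ and $\theta_{x_2}$ multiply each coefficient by $i_1$ and $i_2$ respectively, which agree on the diagonal $i_1=i_2$, so both yield $\theta_{x_1}(\Delta_{1,2}(f))$. Item~(5) is then immediate by subtracting (4) from~(3). Item~(2) is the most delicate: the operator $D_{x_1} D_{x_2}$ drops a factor $i_1 i_2$ and decreases both indices by~$1$, so on the diagonal one reads $i^2$ in front of $x_1^{i-1}$, which is precisely what one obtains by applying $D_{x_1} \theta_{x_1}$ to $\Delta_{1,2}(f) = \sum_{i, i_3, \dots, i_n} a_{i,i,i_3,\dots,i_n} x_1^i x_3^{i_3}\cdots x_n^{i_n}$.

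For items (6) and~(7), the computation is internal to $\bW_n$. For~(6), the relation $D_{x_1} x_1 = x_1 D_{x_1} + 1$, together with the fact that $D_{x_2}$ commutes with both $D_{x_1}$ and~$x_1$, gives $D_{x_1} D_{x_2} \theta_{x_1} = (\theta_{x_1}+1) D_{x_1} D_{x_2}$; the symmetric computation yields $D_{x_1} D_{x_2} \theta_{x_2} = (\theta_{x_2}+1) D_{x_1} D_{x_2}$. Subtracting these two identities, the two constants cancel and one obtains $D_{x_1, x_2} T_{x_1, x_2} = T_{x_1, x_2} D_{x_1, x_2}$. For~(7), the same kind of calculation gives $\theta_{x_1} \cdot x_1 x_2 = x_1 x_2 (\theta_{x_1}+1)$ and $\theta_{x_2} \cdot x_1 x_2 = x_1 x_2 (\theta_{x_2}+1)$, and subtracting yields $T_{x_1,x_2} \cdot x_1 x_2 = x_1 x_2 \cdot T_{x_1,x_2}$, the constants again cancelling.

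The argument is essentially bookkeeping, with no serious conceptual obstacle. The main point to track is the overloaded notation: the symbols $\theta_{x_1}$ and $D_{x_1}$ denote both operators acting on the $n$-variable series $f \in K[[\vx]]$ and operators acting on the $(n{-}1)$-variable diagonal $\Delta_{1,2}(f) \in K[[x_1, x_3, \dots, x_n]]$. The content of items (3) and~(4) is precisely that these two interpretations are compatible through $\Delta_{1,2}$, which is what makes the subsequent inductive arguments on the primary diagonal possible.
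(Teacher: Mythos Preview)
Your proposal is correct and follows essentially the same approach as the paper: direct coefficient extraction for items (1)--(5) and subtraction of the two Weyl-algebra identities $D_{x_1,x_2}\theta_{x_i}=(\theta_{x_i}+1)D_{x_1,x_2}$ (resp.\ $\theta_{x_i}\cdot x_1x_2=x_1x_2(\theta_{x_i}+1)$) for items (6) and~(7). The only difference is that the paper defers the verification of items (2)--(4) to \cite[Lemma~4.3]{WuChen-2013-NDT}, whereas you spell out the coefficient computation directly, which is perfectly fine.
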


\begin{proof}
Given $f = \sum_{i_1,\dots, i_n \geq 0} a_{i_1,\dots, i_n} x_1^{i_1} \dotsm x_n^{i_n} \in K[[\vx]]$, we have
\begin{equation*}
\begin{aligned}
\Delta_{1,2}(x_1 x_2 f) &=
    \Delta_{1,2}\biggl(\sum_{i_1,\ldots, i_n \geq 0} a_{i_1,\ldots, i_n} x_1^{i_1+1} x_2^{i_2+1} x_3^{i_3} \dotsm x_n^{i_n}\biggr) \\
    &= \sum_{i_1, i_3, \ldots, i_n \geq 0} a_{i_1, i_1 \ldots, i_n} x_1^{i_1+1} x_3^{i_3}\cdots x_n^{i_n} = x_1 \Delta_{1,2}(f) ,
\end{aligned}
\end{equation*}
which proves Point~\ref{IT:Delta-xx}.
Points \ref{IT:Delta-Dxx}, \ref{IT:Delta-theta1}, and~\ref{IT:Delta-theta2}
are proved in \cite[Lemma~4.3]{WuChen-2013-NDT}.
Point~\ref{IT:Delta-Txx} immediately follows by linearity from Points \ref{IT:Delta-theta1} and~\ref{IT:Delta-theta2}.
Taking the difference of the two identities
\begin{equation*}
\begin{aligned}
D_{x_1, x_2} (x_1 D_{x_1}) &= D_{x_1}(x_1 D_{x_1})D_{x_2} = (x_1 D_{x_1}+1) D_{x_1, x_2} , \\
D_{x_1, x_2} (x_2 D_{x_2}) &= D_{x_2}(x_2 D_{x_2})D_{x_1} = (x_2 D_{x_2}+1) D_{x_1, x_2} ,
\end{aligned}
\end{equation*}
we obtain Point~\ref{IT:Txx-Dxx}.
Similarly, taking the difference of the two identities
\begin{equation*}
\begin{aligned}
(x_1 D_{x_1}) \ x_1 x_2 &= x_1 (x_1 D_{x_1} + 1) x_2 = x_1 x_2 \ (x_1 D_{x_1}+1) , \\
(x_2 D_{x_2}) \ x_1 x_2 &= x_2 (x_2 D_{x_2} + 1) x_1 = x_1 x_2 \ (x_2 D_{x_2}+1) ,
\end{aligned}
\end{equation*}
proves Point~\ref{IT:Txx-xx}.
\end{proof}

\begin{lemma}\label{LEM:xy}
Let $f(\vx)\in K[[\vx]]$.
Then, there exists $s \in \bN$ such that $T_{x_1, x_2}^s(f)=0$ if and only if there exists $g$ in $n-1$ variables such that $f(\vx) = g(x_1x_2,x_3,\dots,x_n)$.
\end{lemma}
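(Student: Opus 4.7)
The plan is to diagonalize $T_{x_1,x_2}$ on the monomial basis of $K[[\vx]]$ and read off the result coefficient by coefficient. Since $T_{x_1,x_2} = \theta_{x_1}-\theta_{x_2}$ and $\theta_{x_i}$ acts on a monomial $x_1^{i_1}\cdots x_n^{i_n}$ by multiplication by $i_i$, one immediately computes
\begin{equation*}
T_{x_1,x_2}\bigl(x_1^{i_1} x_2^{i_2} x_3^{i_3}\cdots x_n^{i_n}\bigr) = (i_1 - i_2)\, x_1^{i_1} x_2^{i_2} x_3^{i_3}\cdots x_n^{i_n},
\end{equation*}
so that each monomial is a $T_{x_1,x_2}$-eigenvector with eigenvalue $i_1-i_2$. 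Iterating, $T_{x_1,x_2}^s$ multiplies the same monomial by $(i_1-i_2)^s$.

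For the “if” direction, assume $f(\vx) = g(x_1 x_2, x_3, \ldots, x_n)$. Expanding $g$ as a formal power series shows that every monomial appearing in $f$ has $i_1=i_2$, so every eigenvalue $(i_1-i_2)$ is zero, and already $T_{x_1,x_2}(f)=0$; in particular $s=1$ works.

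For the “only if” direction, write $f = \sum_{\vi \in \bN^n} a_{\vi}\, x_1^{i_1}\cdots x_n^{i_n}$. By the eigenvalue computation above,
\begin{equation*}
T_{x_1,x_2}^s(f) \;=\; \sum_{\vi\in\bN^n} (i_1-i_2)^s\, a_{\vi}\, x_1^{i_1}\cdots x_n^{i_n}.
\end{equation*}
Vanishing of this formal power series forces $(i_1-i_2)^s a_{\vi}=0$ for every multi-index, and since $K$ has characteristic $0$, this means $a_{\vi}=0$ whenever $i_1 \neq i_2$. Hence only the “diagonal” monomials $(x_1 x_2)^j x_3^{i_3}\cdots x_n^{i_n}$ survive, and setting
\begin{equation*}
g(y, x_3,\ldots, x_n) \;:=\; \sum_{j,i_3,\ldots,i_n\geq 0} a_{j,j,i_3,\ldots,i_n}\, y^j x_3^{i_3}\cdots x_n^{i_n}
\end{equation*}
yields a well-defined element of $K[[y,x_3,\ldots,x_n]]$ with $f(\vx)=g(x_1 x_2, x_3,\ldots,x_n)$.

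There is no real obstacle here: the proof is essentially the spectral decomposition of $T_{x_1,x_2}$ read off the monomial basis, combined with the triviality that formal power series are uniquely determined by their coefficients. The only conceptual point to keep in mind is that characteristic $0$ (already a global assumption of the paper) is what allows us to invert the non-zero integer factors $(i_1-i_2)^s$.
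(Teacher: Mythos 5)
Your proof is essentially the same as the paper's: both read off the action of $T_{x_1,x_2}$ on monomials as multiplication by $i_1-i_2$, conclude that coefficients with $i_1\neq i_2$ must vanish (using characteristic $0$), and exhibit $g$ from the surviving diagonal coefficients. The paper separates out the trivial case $s=0$ explicitly, but your argument covers it implicitly, so there is no substantive difference.
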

\begin{proof}
	If $f(\vx) = g(x_1x_2,x_3,\dots,x_n)$, write
	$$ g(x_1, x_3, \dots, x_n) = \sum_{i_1, i_3,\dots, i_n\geq 0} b_{i_1, i_3,\ldots, i_n} x_1^{i_1} x_3^{i_3}\cdots x_n^{i_n}. $$
	Take $ s = 1 $, then
	$$
	\begin{aligned}
		T_{x_1, x_2}(f) &= T_{x_1, x_2} (g(x_1x_2,x_3,\dots,x_n)) \\ &= \sum_{i_1, i_3,\dots, i_n\geq 0} \, (i_1 - i_1) \, b_{i_1, i_3, \ldots, i_n} \, x_1^{i_1} x_2^{i_1} x_3^{i_3}\cdots x_n^{i_n} = 0.
	\end{aligned}
	$$
	For the converse statement, assume there exists $ s \in \bN $ such that $T_{x_1, x_2}^s(f)=0$. If $ s = 0 $, then $ f = 0 $. Take $ g=0 $. If $ s>0 $, write $f = \sum_{i_1, \dots, i_n\geq 0} a_{i_1,\ldots, i_n} x_1^{i_1}\cdots x_n^{i_n} $. Then $$T_{x_1, x_2}^s(f) = \sum_{i_1, \dots, i_n\geq 0} (i_1-i_2)^s a_{i_1,\ldots, i_n} x_1^{i_1}\cdots x_n^{i_n} = 0.$$
	Hence $(i_1-i_2)^s a_{i_1,\ldots, i_n}=0$ for all integers $i_1, \dots, i_n \geq 0$, so that $a_{i_1,\ldots, i_n}=0$ for all $i_1\neq i_2$. Take
	$$ g(x_1, x_3, \dots, x_n) = \sum_{i_1, i_3, \dots, i_n\geq 0} a_{i_1, i_1, i_3,\ldots, i_n} x_1^{i_1} x_3^{i_3}\cdots x_n^{i_n}. $$ Then $ f(\vx) = g(x_1x_2,x_3,\dots,x_n) $.
\end{proof}

\begin{lemma}\label{LEM:dfinite-alg}
Let $f(\vx)\in K[[\vx]]$ be D-finite over~$K(\vx)$.
Write $\vy$ for $y_1,\dots, y_m$
and consider power series
\begin{equation*}
g_1(\vy), \dots, g_n(\vy) \in K[[\vy]]
\end{equation*}
that are algebraic over~$K(\vy)$.
Assume that the substitution $f(g_1(\vy), \dots, g_n(\vy))$ is well-defined in~$K[[\vy]]$.
Then the series $f(g_1(\vy), \dots, g_n(\vy))$ is D-finite over~$K(\vy)$.
In particular, let $f(\vx)$ be D-finite over~$K(\vx)$
and suppose that the evaluation of $f(\vx)$ at $x_2=1$
is well-defined as a series in~$K[[x_1,x_3,\dots,x_n]]$,
then $f(x_1,1,x_3,\dots, x_n)$ is D-finite
over~$K(x_1,x_3,\dots,x_n)$.
\end{lemma}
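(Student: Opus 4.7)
The plan is to prove the classical closure property that D-finite functions are preserved under substitution by algebraic functions, then specialize. First I would set up the algebraic framework: let $L := K(\vy)(g_1,\dots,g_n)$, which is a finite algebraic extension of $K(\vy)$ since each $g_i$ is algebraic over $K(\vy)$. Because algebraic extensions in characteristic zero admit unique extensions of derivations, each partial derivation $\partial/\partial y_j$ on $K(\vy)$ extends uniquely to a derivation on $L$; in particular every $\partial g_i/\partial y_j$ lies in $L$.

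Next I would exploit the D-finiteness of $f$. Let $V_f \subseteq \cS$ denote the finite-dimensional $K(\vx)$-vector space spanned by the partial derivatives of $f$, and pick a basis $\phi_1,\dots,\phi_d$ of $V_f$. Set $h(\vy) := f(g_1(\vy),\dots,g_n(\vy))$ and let
\[
    W := L\text{-span}\bigl\{\phi_i(g_1(\vy),\dots,g_n(\vy)) : 1 \le i \le d\bigr\}.
\]
Then $\dim_L W \le d$, and $h \in W$ because $f$ itself is a $K(\vx)$-linear combination of the $\phi_i$ (namely $f \in V_f$).

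The core step is to show that $W$ is stable under each derivation $\partial/\partial y_j$. For a typical generator $\phi_i(g_1,\dots,g_n)$, the chain rule gives
\[
    \frac{\partial}{\partial y_j} \phi_i(g_1,\dots,g_n)
    = \sum_{k=1}^{n} \frac{\partial g_k}{\partial y_j} \cdot \Bigl(\frac{\partial \phi_i}{\partial x_k}\Bigr)(g_1,\dots,g_n).
\]
The coefficients $\partial g_k/\partial y_j$ lie in $L$, and each $\partial\phi_i/\partial x_k$ again belongs to $V_f$, so its composition with $(g_1,\dots,g_n)$ lies in $W$; combined with the closure of $L$ under $\partial/\partial y_j$ this gives $\partial W/\partial y_j \subseteq W$. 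By induction all iterated partials of $h$ lie in $W$.

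Finally I would pass from $L$ back to $K(\vy)$: since $L/K(\vy)$ is a finite field extension, $W$ is also finite-dimensional as a $K(\vy)$-vector space, with $\dim_{K(\vy)} W \le d \cdot [L:K(\vy)]$. The $K(\vy)$-span of the partial derivatives of $h$ is a subspace of $W$ and is therefore finite-dimensional, proving that $h$ is D-finite over $K(\vy)$. For the particular case, apply this to $m = n-1$, variables $\vy = (x_1,x_3,\dots,x_n)$, and $g_i = x_i$ for $i \ne 2$ together with $g_2 = 1$; all these substitutions are (trivially) algebraic over $K(\vy)$, so the general statement applies once the hypothesis that $f(x_1,1,x_3,\dots,x_n)$ makes sense is assumed. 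There is no real obstacle in this argument; the only delicate bookkeeping is to verify the extension of derivations to $L$ and to keep track of scalars over $L$ versus $K(\vy)$, which is handled by the finiteness of $L/K(\vy)$.
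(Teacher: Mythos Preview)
Your argument is correct and is essentially the classical proof of this closure property. The paper does not actually prove this lemma: its entire proof is the single line ``See \cite[Proposition~2.3]{Lipshitz1989}.'' What you have written is, in outline, precisely the argument Lipshitz gives there---form the finite algebraic extension $L = K(\vy)(g_1,\dots,g_n)$, extend the derivations to~$L$, use the chain rule to show that a finite-dimensional $L$-space containing all derivatives of~$h$ is stable under the~$\partial/\partial y_j$, and then descend to~$K(\vy)$ via the finiteness of~$[L:K(\vy)]$. One small point worth making explicit in a full write-up: choose the basis $\phi_1,\dots,\phi_d$ of~$V_f$ among the derivatives $\vD_x^{\aalpha} f$ themselves, so that each substitution $\phi_i(g_1,\dots,g_n)$ is automatically well defined; the rational-function coefficients expressing higher derivatives in this basis then need their denominators not to vanish at~$(g_1,\dots,g_n)$, which is the mild genericity condition implicit in the hypothesis that the substitution ``makes sense.''
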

\begin{proof}
	See \cite[Proposition 2.3]{Lipshitz1989}.
\end{proof}
From the definition of $K(\vx)\<\Dx>$,
\[D_{x_i} a = a D_{x_i} + D_{x_i}(a) \quad \text{for all $a \in K(\vx)$}.\]
More generally, we have the formulae: for all~$a \in K(\vx)$ and $i\in \{1,\dots,n\}$,
\begin{equation} \label{EQ:leibniz}
	D_{x_i}^k a=\sum_{\ell =0}^k{k \choose \ell} D_{x_i}^{\ell}(a)D_{x_i}^{k-\ell}
\end{equation}
and
\begin{equation} \label{EQ:leibniz_dual}
	a D_{x_i}^k= \sum_{\ell=0}^k (-1)^{\ell}{k \choose \ell} D_{x_i}^{k-\ell} D_{x_i}^{\ell}(a).
\end{equation}
The relations~\eqref{EQ:leibniz} and \eqref{EQ:leibniz_dual} can be proved by a straightforward induction.
In the sequel, we merely use the facts that,
for all~$a \in K[\vx]$ and $i\in \{1,\dots,n\}$,
\begin{equation} \label{EQ:lin}
	D_{x_i}^k a = f D_{x_i}^k + P  \quad \quad {\rm and} \quad \quad a D_{x_i}^k = D_{x_i}^k a - P,
\end{equation}
where~$P \in K[\vx]\<D_{x_i}>$ with $\ord(P) < k$ and $\deg(P) \le \deg(a)$.
Denote by $ K[\vx]_{\leq d} $ the set of polynomials in $ K[\vx] $ with total degree less than or equal to~$d$.

A number of similar arguments in the rest of the article will differ
only by a choice of variables.
This is why we make some notation depend on a set~$S$ in the following definition.
The reader is invited to pay attention to this implicit dependency in what follows.
We will indeed use $S = \{1,2\}$ and~$S = \{1,2,h\}$ in the present Section~\ref{SECT:diagonal},
and we will additionally use $S = \{1,\dots,n\}$ in Section~\ref{SEC:bound}.

\begin{notation}\label{DEF:C}
Fix a subset $ S \subseteq \{ 1,2, \dots,n  \}  $.
Let $L_i \in \bW_n$, for~$1\leq i\leq n$, be operators defined by~\eqref{eq:L_i} as in Definition~\ref{DEF:bound}.
In particular, recall $L_i = \ell_{i,r_i} D_{x_i}^{r_i} + \dots$ for some non-zero polynomial~$\ell_{i,r_i} \in K[\vx]$.
We give the following definitions and notation:
\begin{enumerate}
\item Given $ \bbeta \in \bN^S $, write $ \DS^\bbeta $ for the product $ \prod_{j \in S} D_{x_j}^{\beta_j} $,
\item $C := \lcm_{j \in S} (\ell_{j,r_j}) \in K[\vx]$,
\item for each $ j \in S $, $ \tilde{L}_j := (C / \ell_{j,r_j}) L_j \in \bW_n $,
\item $ d_C := \sum_{j\in S} d_j $,
\item $B:= \prod_{j \in S} \lbrace   0, 1, \dots r_i-1 \rbrace \subseteq \bN^{\#S}$,
\item $ \cF_{d,r} := \bigoplus_{ | \bbeta | \leq r} K[\vx]_{\leq d} \; \DS^\bbeta $,
\item $ \mycH_{d,r} := \bigoplus_{ | \bbeta | \leq r \, \text{or} \, \bbeta \in B } K[\vx]_{\leq d} \; \DS^\bbeta $,
\item $ J := \sum_{j\in S} \bW_n \tilde{L}_j $,
\end{enumerate}
where the dependency in~$S$ is kept implicit in the notation.
\end{notation}
Immediately we have
\begin{lemma}\label{LM:C}
For any non-empty set $ S \subseteq \{ 1,2, \dots,n  \} $, consider the quantities in Definition \ref{DEF:C}. Then
\begin{enumerate}
\item $\tdeg (C) \leq  d_C$,
\item for each $ i\in S $, $ \tilde{L}_i (f) = 0 $ and $ \deg \tilde{L}_i \le d_C $,
\item for each $ i \in S $, $ C D_{x_i}^{r_i} = (C D_{x_i}^{r_i} - \tilde{L}_i) + \tilde{L}_i \in \cF_{d_C,r_i-1} + J. $
\end{enumerate}
\end{lemma}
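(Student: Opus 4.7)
The plan is to verify each of the three items directly from Definition~\ref{DEF:C}, since each follows from basic properties of least common multiples and leading coefficients.

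For Item~1, I would use that $C$ divides $\prod_{j \in S}\lc(L_j)$ because $C$ is their lcm, so $\tdeg(C) \leq \sum_{j \in S} \tdeg(\lc(L_j))$. Since each $\lc(L_j)$ is by construction one of the polynomial coefficients of $L_j$, its total degree is at most $\deg(L_j) = d_j$. Summing over $j \in S$ yields $\tdeg(C) \leq d_C$.

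For Item~2, the observation that $\lc(L_j) \mid C$ gives $C/\lc(L_j) \in K[\vx]$, so $\tilde{L}_j$ lies in $\bW_n$. Since $L_j(f) = 0$, left multiplication by the polynomial $C/\lc(L_j)$ gives $\tilde{L}_j(f) = 0$. To bound $\deg \tilde{L}_j$, I would write $L_j = \sum_{k=0}^{r_j} \ell_{j,k} D_{x_j}^k$, so each coefficient of $\tilde{L}_j$ is of the form $(C/\lc(L_j))\ell_{j,k}$ with total degree at most $\tdeg(C) - \tdeg(\lc(L_j)) + d_j$. Using $C \mid \prod_i \lc(L_i)$ once more, I would obtain $\tdeg(C) - \tdeg(\lc(L_j)) \leq \sum_{i \in S,\, i \neq j} \tdeg(\lc(L_i)) \leq d_C - d_j$, which combined with the above bounds $\deg \tilde{L}_j$ by $d_C$.

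For Item~3, the containment $\tilde{L}_i \in J$ is immediate from the definition of $J$. The substantive part is to exhibit $C D_{x_i}^{r_i} - \tilde{L}_i$ as an element of $\cF_{d_C, r_i-1}$. Since $\ell_{i,r_i} = \lc(L_i)$, the leading term of $\tilde{L}_i$ viewed as a polynomial in $D_{x_i}$ is exactly $C D_{x_i}^{r_i}$, so the remainder $C D_{x_i}^{r_i} - \tilde{L}_i$ has $D_{x_i}$-order strictly less than $r_i$, and its coefficients are (up to sign) the lower-order coefficients of $\tilde{L}_i$, of total degree bounded by $d_C$ by Item~2 --- precisely the defining condition for $\cF_{d_C, r_i-1}$. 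The decomposition $C D_{x_i}^{r_i} = (C D_{x_i}^{r_i} - \tilde{L}_i) + \tilde{L}_i$ then places $C D_{x_i}^{r_i}$ in $\cF_{d_C, r_i-1} + J$ as claimed.

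None of the three items presents a real obstacle; the only step requiring any care is the degree bookkeeping in Item~2, where tracking the cancellation between $\tdeg(C)$ and $\tdeg(\lc(L_j))$ --- rather than using a cruder bound of $\tdeg(C)$ by $d_C$ alone --- is exactly what makes the definition $d_C = \sum_j d_j$ the right choice.
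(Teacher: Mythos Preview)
Your proposal is correct and follows exactly the approach the paper intends: the paper simply prefaces the lemma with ``Immediately we have'' and gives no explicit proof, treating all three items as direct consequences of Definition~\ref{DEF:C}. Your argument fills in precisely the expected details, including the one nontrivial bookkeeping step in Item~2 (using $C/\lc(L_j)\mid\prod_{i\neq j}\lc(L_i)$ to get the sharp bound~$d_C$ rather than a looser $d_C+d_j$).
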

Continuing in analogy with \cite[Lemma~3]{Lipshitz1988}, we have the following lemmas:
\begin{lemma} \label{LM:reduce-1}
For all $\aalpha \in \bN^S$, $C \DS^\aalpha$~is an element of~$\cF_{d_C, | \aalpha |-1} + J$.
\end{lemma}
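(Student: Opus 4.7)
My plan is to prove the lemma by induction on the total order $|\aalpha| = \sum_{j\in S}\alpha_j$, using the one-step reduction of Lemma~\ref{LM:C}(3) as the inductive engine. The structural observation driving everything is that $\tilde L_j$, being of the form $C\,D_{x_j}^{r_j} + (\text{lower $D_{x_j}$-order, degree at most } d_C)$, lets me trade a factor $C D_{x_j}^{r_j}$ against lower-order derivations modulo $J$.

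For the induction step, I assume the claim for all indices of total order strictly less than $|\aalpha|$ and pick some $j\in S$ with $\alpha_j\geq r_j$. Since the $D_{x_i}$ commute pairwise inside $\bW_n$, I factor $\DS^\aalpha = D_{x_j}^{r_j}\DS^\gamma$ with $\gamma := \aalpha - r_j e_j \in \bN^S$, so that
\[
C\DS^\aalpha = \bigl(C\,D_{x_j}^{r_j}\bigr)\DS^\gamma .
\]
By Lemma~\ref{LM:C}(3) I decompose $C\,D_{x_j}^{r_j} = P + Q$ with $P \in \cF_{d_C, r_j - 1}$ and $Q \in J$. The term $Q\DS^\gamma$ stays in $J$ because $J$ is a left ideal of $\bW_n$. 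Expanding $P = \sum_{|\bbeta|\leq r_j - 1} p_\bbeta \DS^\bbeta$ with $p_\bbeta \in K[\vx]_{\leq d_C}$, the other term is $P\DS^\gamma = \sum_\bbeta p_\bbeta \DS^{\bbeta + \gamma}$: no Leibniz corrections appear since the polynomial coefficients sit to the left of all derivations and the $D_{x_i}$'s commute. Each exponent satisfies $|\bbeta + \gamma| \leq (r_j - 1) + (|\aalpha| - r_j) = |\aalpha| - 1$, so $P\DS^\gamma \in \cF_{d_C, |\aalpha| - 1}$, which yields the desired decomposition for~$\aalpha$.

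The only subtlety I anticipate is the boundary case $\aalpha \in B$, i.e., when $\alpha_j < r_j$ for every $j \in S$: there no $\tilde L_j$ is available to trim the order, and $C\DS^\aalpha$ sits only in $K[\vx]_{\leq d_C}\DS^\aalpha \subseteq \cF_{d_C,|\aalpha|}$, one order above the stated target. I expect this to be either implicitly restricted away (the lemma being applied only to $\aalpha$ with some $\alpha_j \geq r_j$, which triggers the reduction), or compensated downstream through the companion space $\cH_{d,r}$, whose definition precisely adds back the $\bbeta \in B$ summands. Apart from this small bookkeeping matter, the argument is a direct induction driven entirely by Lemma~\ref{LM:C}(3) and the commutation of partial derivations.
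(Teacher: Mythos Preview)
Your argument contains a genuine gap at the step ``$Q\DS^\gamma$ stays in $J$ because $J$ is a left ideal of $\bW_n$''. By definition $J = \sum_{i\in S}\bW_n\tilde L_i$ is a \emph{left} ideal, so $\bW_n J \subseteq J$; but you are multiplying $Q$ on the \emph{right} by $\DS^\gamma$, and a left ideal is not closed under right multiplication. Concretely, with $Q=\tilde L_j=\sum_k c_k D_{x_j}^k$, the product $\tilde L_j\DS^\gamma = \sum_k c_k \DS^\gamma D_{x_j}^k$ differs from the legitimate element $\DS^\gamma\tilde L_j\in J$ by the commutator $\sum_k [c_k,\DS^\gamma] D_{x_j}^k$, which lands only in $\cF_{d_C,\,|\aalpha|-1}$ (polynomials of degree $\leq d_C$ times derivations of total order $\leq |\gamma|-1+r_j=|\aalpha|-1$), not in~$J$. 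So the conclusion survives, but the stated reason is false and the missing Leibniz correction must be supplied.

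The paper avoids this pitfall by reversing the order: it forms $\DS^{\aalpha - r_j\ee_j}\,(C D_{x_j}^{r_j})$ so that the factor in~$J$ sits on the right and the left-ideal property applies directly; it then uses the commutation rules~\eqref{EQ:lin} twice (once to push $\DS^{\aalpha-r_j\ee_j}$ past the polynomial coefficients, and once to relate $\DS^{\aalpha-r_j\ee_j}C$ back to $C\DS^{\aalpha-r_j\ee_j}$), absorbing all correction terms into $\cF_{d_C,|\aalpha|-1}$. Your observation about the case $\aalpha\in B$ is accurate: the paper's proof, too, treats it as vacuous (``nothing needs to be proven''), and the slack is indeed absorbed downstream by the $\cH_{d,r}$ spaces in Lemma~\ref{LM:reduce-1.5}. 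Finally, note that neither you nor the paper actually invokes an inductive hypothesis: both arguments are one-step reductions, so your framing as an induction on~$|\aalpha|$ is superfluous.
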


\begin{proof}
If $ \aalpha \in B $, nothing needs to be proven. So suppose, for instance, $ n \in S $ and $ \alpha_n \geq r_n $. Then multiply $ \DS^{\aalpha - r_n \ee_n} $ with $  C  D_{x_n}^{r_n} $, where $ e_n := (0,0,\dots,1) $, which yields
\begin{equation*}
\begin{aligned}
\DS^{\aalpha - r_n \ee_n}  C  D_{x_n}^{r_n} &\in \bigoplus_{j=0}^{r_n-1} \DS^{\aalpha - r_n \ee_n} K[\vx]_{\leq d_C} D_{x_n}^j + J \\
&\subseteq  \sum_{j=0}^{r_n-1} \left( K[\vx]_{\leq d_C} \DS^{\aalpha - r_n \ee_n} + \cF_{d_C, | \aalpha |-r_n-1}\right)  D_{x_n}^j + J \\
&\subseteq \cF_{d_C, | \aalpha |-1} + \cF_{d_C, | \aalpha |-2} + J  = \cF_{d_C, | \aalpha |-1} +J.
\end{aligned}
\end{equation*}
Hence
\begin{equation*}
\begin{aligned}
C \DS^\aalpha \in (\DS^{\aalpha - r_n \ee_n}  C + \cF_{d_C, | \aalpha |-r_n-1} ) D_{x_n}^{r_n} &\subseteq \DS^{\aalpha - r_n \ee_n}  C D_{x_n}^{r_n} + \cF_{d_C, | \aalpha |-1} \\
&\subseteq \cF_{d_C, | \aalpha |-1} + J.
\end{aligned}
\end{equation*}
\end{proof}

\begin{lemma} \label{LM:reduce-1.5}
	For any $ t \in \bN, r \in \bZ $, $ C\mycH_{t,r} \subseteq  \mycH_{d_C+t,r-1} + J $.
\end{lemma}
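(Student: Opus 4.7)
The plan is to exploit the direct-sum description of $\cH_{t,r}$: every element is a finite sum of generators $p\,\DS^{\bbeta}$ with $p\in K[\vx]_{\leq t}$ and $\bbeta$ satisfying at least one of the two conditions ``$|\bbeta|\leq r$'' or ``$\bbeta\in B$''. By $K$-linearity it suffices to check the inclusion on one such generator. The observation that makes the argument work is that $C\in K[\vx]$ commutes with $p$, so $C\,p\,\DS^{\bbeta}=p\,C\,\DS^{\bbeta}$, and we can feed $C\,\DS^{\bbeta}$ into Lemma~\ref{LM:reduce-1}.

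I would then split on which defining condition $\bbeta$ satisfies. If $\bbeta\in B$, there is nothing to reduce: $pC$ is a polynomial of total degree at most $t+d_C$ (using Lemma~\ref{LM:C}(1)), and $(pC)\,\DS^{\bbeta}$ sits in $K[\vx]_{\leq d_C+t}\,\DS^{\bbeta}$, which is a summand of $\cH_{d_C+t,r-1}$ because the $B$-indexed block appears in $\cH_{\cdot,\cdot}$ independently of the second subscript. If instead $\bbeta\notin B$, then necessarily $|\bbeta|\leq r$, and Lemma~\ref{LM:reduce-1} gives $C\,\DS^{\bbeta}=F+Q$ with $F\in\cF_{d_C,|\bbeta|-1}$ and $Q\in J$. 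Left-multiplying by $p$ keeps $pQ$ in $J$ (since $J=\sum_{j\in S}\bW_n\tilde L_j$ is a \emph{left} $\bW_n$-ideal), and transforms each summand $q\,\DS^{\bbeta'}$ of $F$ (with $q\in K[\vx]_{\leq d_C}$ and $|\bbeta'|\leq|\bbeta|-1\leq r-1$) into $(pq)\,\DS^{\bbeta'}$ with $pq\in K[\vx]_{\leq d_C+t}$, so $pF\in\cF_{d_C+t,r-1}\subseteq\cH_{d_C+t,r-1}$. Hence $Cp\,\DS^{\bbeta}\in\cH_{d_C+t,r-1}+J$ in both cases, and summing over generators proves the lemma.

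I do not expect any real obstacle here: the argument is essentially a decoration of Lemma~\ref{LM:reduce-1} by the extra polynomial coefficient $p$, and the main point of care is simply bookkeeping of the degree-and-order filtration, namely remembering that the $B$-indexed slice of $\cH$ is insensitive to the order index and that the left-ideal structure of $J$ is what allows us to absorb left multiplication by $p$. The edge cases $r\leq 0$ are automatic: $\cF_{\cdot,r-1}$ degenerates to zero and only the trivial Case A (with $\bbeta\in B$) survives, which the argument above already covers.
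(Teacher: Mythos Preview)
Your proof is correct and follows essentially the same route as the paper's: both commute $C$ past the polynomial coefficients, split into the cases $\bbeta\in B$ and $\bbeta\notin B$ (with $|\bbeta|\leq r$), and in the latter case invoke Lemma~\ref{LM:reduce-1} together with the left-ideal property of~$J$ and straightforward degree bookkeeping. Your write-up is slightly more explicit about the role of each ingredient, but there is no substantive difference.
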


\begin{proof}
	We have the chain of equalities and inclusions:
	$$
	\begin{aligned}
		C\mycH_{t,r} &= C \bigoplus_{ | \bbeta | \leq r \, \text{or} \, \bbeta \in B } K[\vx]_{\leq t} \; \DS^\bbeta = \bigoplus_{ | \bbeta | \leq r \, \text{or} \, \bbeta \in B } K[\vx]_{\leq t} \; C \DS^\bbeta \\
		&\subseteq \sum_{\bbeta \in B} K[\vx]_{\leq d_C + t} \, \DS^\bbeta + \sum_{ | \bbeta | \leq r \, \text{and} \, \bbeta \notin B} K[\vx]_{\leq t} \, \cF_{d_C,| \bbeta | -1} + J \\
		&\subseteq \mycH_{d_C+t, r-1} + J,
	\end{aligned}
	$$
	where the first inclusion is by Lemma \ref{LM:reduce-1}.
\end{proof}

\begin{lemma} \label{LM:reduce-2}
	For any $ u, t \in \bN, v \in \bZ $, if $ u \geq v $, then $ C^{u} \mycH_{t,v} \subseteq  \mycH_{t+u d_C,0} + J $. In particular, for all $ \aalpha \in \bN^S $, $ C^{|\aalpha|} \DS^\aalpha \in \mycH_{ |\aalpha| d_C,0} + J $.
\end{lemma}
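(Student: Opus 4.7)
The plan is to prove the main containment by induction on $v$, using Lemma~\ref{LM:reduce-1.5} to drive the inductive step, and then to obtain the ``in particular'' clause as a direct specialization.

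First I would handle the base case $v \leq 0$. Since $B = \prod_{j \in S} \{0, 1, \ldots, r_j - 1\}$ contains the zero tuple (each $r_j \geq 1$), the condition ``$|\bbeta| \leq v$ or $\bbeta \in B$'' in the definition of $\cH_{t,v}$ collapses to ``$\bbeta \in B$'' whenever $v \leq 0$, and hence $\cH_{t,v} = \bigoplus_{\bbeta \in B} K[\vx]_{\leq t} \, \DS^\bbeta$. Left-multiplying by $C^u$ and using $\tdeg(C^u) \leq u\, d_C$ (from Lemma~\ref{LM:C}) sends each summand into $K[\vx]_{\leq t + u d_C} \, \DS^\bbeta$, hence into $\cH_{t + u d_C, 0}$; no contribution from $J$ is needed at this stage.

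For the inductive step $v \geq 1$, I would apply Lemma~\ref{LM:reduce-1.5} to get $C \cH_{t,v} \subseteq \cH_{t + d_C,\, v-1} + J$, and then left-multiply by $C^{u-1}$. Since $J$ is a left ideal of $\bW_n$ and $C^{u-1} \in \bW_n$, we have $C^{u-1} J \subseteq J$, so it suffices to bound $C^{u-1} \cH_{t+d_C,\, v-1}$. Because $u - 1 \geq v - 1$, the induction hypothesis applies with parameters $(u-1,\, t+d_C,\, v-1)$ and yields containment in $\cH_{t + d_C + (u-1) d_C,\, 0} + J = \cH_{t + u d_C,\, 0} + J$, completing the step.

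For the ``in particular'' clause, I would specialize $t = 0$ and $u = v = |\aalpha|$: since $|\aalpha| \leq |\aalpha|$ and $1 \in K[\vx]_{\leq 0}$, we have $\DS^\aalpha \in \cH_{0, |\aalpha|}$, and the first part then delivers $C^{|\aalpha|} \DS^\aalpha \in \cH_{|\aalpha| d_C, 0} + J$. I do not foresee any serious obstacle; the only subtle bookkeeping points are (i)~recognizing that the $v \leq 0$ regime behaves trivially once one observes $0 \in B$, and (ii)~repeatedly invoking $C^{u-1} J \subseteq J$, which relies only on $J$ being a left $\bW_n$-ideal rather than a two-sided one.
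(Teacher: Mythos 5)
Your proof is correct and follows essentially the same strategy as the paper's: the paper observes that $\cH_{t,r'} = \cH_{t,0}$ for $r' \leq 0$ and then iterates Lemma~\ref{LM:reduce-1.5} $u$ times, whereas you formalize the same iteration as an induction on $v$ with an explicit base case for $v \leq 0$. The details you supply (that $0 \in B$ so the condition collapses, and that $C^{u-1}J \subseteq J$ because $J$ is a left ideal) are all sound and make the argument slightly more explicit than the paper's one-line version.
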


\begin{proof}
	Note that for all $ r' \leq 0 $, $ \mycH_{ t, r'} = \mycH_{t,0} $. The result is obtained by making~$ u $ repetitions of Lemma \ref{LM:reduce-1.5}. 
\end{proof}
Lemma \ref{LM:reduce-2} is specialized as follows.
\begin{lemma}
	Set $ u:= v + 1 - \min_{i\in S} r_i $. Then
	$ C^{u} \mycH_{t,v} \subseteq \mycH_{t+u d_C,0} +J $.
\end{lemma}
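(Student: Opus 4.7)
The plan is to strengthen Lemma~\ref{LM:reduce-2} by exploiting the fact that, once the filtration index~$r$ drops below $\min_{i\in S} r_i$, the space $\cH_{t,r}$ no longer depends on~$r$ and already coincides with $\cH_{t,0}$. This observation lets us stop the iteration earlier than in Lemma~\ref{LM:reduce-2}.

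First I would record the key observation: if $r \leq \min_{i\in S} r_i - 1$, then any $\bbeta \in \bN^S$ with $|\bbeta| \leq r$ satisfies $\beta_j \leq |\bbeta| \leq \min_{i \in S} r_i - 1 \leq r_j - 1$ for every $j \in S$, hence $\bbeta \in B$. Therefore the index set $\{\bbeta : |\bbeta| \leq r\} \cup B$ equals $B$, and so
\begin{equation*}
\cH_{t,r} \;=\; \bigoplus_{\bbeta \in B} K[\vx]_{\leq t}\,\DS^\bbeta \;=\; \cH_{t,0}
\quad \text{whenever } r \leq \min_{i \in S} r_i - 1.
\end{equation*}

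Next, I would establish by induction on $k \geq 0$ that $C^k \cH_{t,v} \subseteq \cH_{t + k d_C,\, v-k} + J$. The base case $k=0$ is immediate. For the inductive step, assuming the claim at stage~$k$, I multiply both sides on the left by~$C$ and use Lemma~\ref{LM:reduce-1.5} to get
\begin{equation*}
C^{k+1} \cH_{t,v} \;\subseteq\; C\bigl(\cH_{t + k d_C,\, v-k} + J\bigr) \;\subseteq\; \cH_{t + (k+1) d_C,\, v-k-1} + C\cdot J.
\end{equation*}
Since $J = \sum_{j \in S} \bW_n \tilde{L}_j$ is a left ideal of~$\bW_n$, we have $C \cdot J \subseteq J$, which completes the induction.

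Finally, I specialize to $k = u := v + 1 - \min_{i\in S} r_i$, assuming $u \geq 0$ (otherwise the conclusion is vacuous since $\cH_{t,v}$ already equals $\cH_{t,0}$ by the initial observation). Then $v - u = \min_{i\in S} r_i - 1$, so the preliminary observation applies and yields $\cH_{t + u d_C,\, v-u} = \cH_{t + u d_C,\, 0}$. Combining this with the inductive conclusion gives
\begin{equation*}
C^u \cH_{t,v} \;\subseteq\; \cH_{t + u d_C,\, 0} + J,
\end{equation*}
as desired. There is no real obstacle here: the proof is essentially routine once one notices that the truncation at~$B$ built into~$\cH$ allows an earlier stopping criterion than the one used in Lemma~\ref{LM:reduce-2}.
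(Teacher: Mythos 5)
Your proof is correct and follows essentially the same route as the paper: you isolate the same key observation that $\cH_{t,r'} = \cH_{t,0}$ whenever $r' < \min_{i\in S} r_i$ (because any $\bbeta$ with $|\bbeta| < \min r_i$ already lies in~$B$), and you then iterate Lemma~\ref{LM:reduce-1.5} exactly $u$~times so that the index drops from~$v$ to $\min_{i\in S}r_i - 1$, at which point the filtration has stabilized. Your write-up is merely a more detailed, inductive rendering of the paper's terse argument, with the useful added remark that $C\cdot J\subseteq J$ because $J$~is a left ideal.
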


\begin{proof}
	Observe that for any $ \bbeta \in \bN^S $, if $ |\bbeta | < \min r_i $, then $ \bbeta \in B $. Hence for any $ r' < \min r_i $, $ \mycH_{ t, r'} = \mycH_{t,0} $. Again, the result is obtained by repeating the use of Lemma \ref{LM:reduce-1.5} $ u $ times.
\end{proof}

\begin{observation}\label{LM:ineq1}
	For positive integers $ D$ and $R $, define $ N = 3 D^2 R $, then
	$$ 		 \binom{N+3}{3} - R \binom{DN+2}{2}  > 0.  $$
\end{observation}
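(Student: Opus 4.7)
The plan is to expand both binomial coefficients as polynomials in $N$, substitute the value $N = 3D^2R$, and then verify the resulting polynomial inequality in $D$ and~$R$ term by term. There is no conceptual obstacle here; the only concern is to bookkeep the elementary estimates carefully and to exploit the identity $3RD^2 = N$ at the crucial step, where the cubic terms on the left side cancel the quadratic term (after multiplication by~$R$) on the right.

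More precisely, I would first write
\begin{equation*}
\binom{N+3}{3} = \frac{(N+1)(N+2)(N+3)}{6} = \frac{N^3 + 6N^2 + 11N + 6}{6}
\end{equation*}
and
\begin{equation*}
R\binom{DN+2}{2} = R \cdot \frac{(DN+1)(DN+2)}{2} = \frac{RD^2 N^2 + 3RDN + 2R}{2}.
\end{equation*}
Multiplying the desired inequality by~$6$ reduces it to showing
\begin{equation*}
N^3 + 6N^2 + 11N + 6 \;>\; 3RD^2 N^2 + 9RDN + 6R.
\end{equation*}

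Now I would substitute $N = 3D^2 R$. The choice is designed exactly so that $3RD^2 \cdot N^2 = N^3$, so the cubic terms on both sides cancel, and the task reduces to verifying
\begin{equation*}
6N^2 + 11N + 6 \;>\; 9RDN + 6R.
\end{equation*}
The terms can be compared directly: since $D \geq 1$, one has $6N^2 = 54 D^4 R^2 \geq 27 D^3 R^2 \cdot 2 \geq 9RDN \cdot \frac{2D}{1}$, so $6N^2 \geq 9RDN$ (in fact strictly whenever $D \geq 1$ and $R \geq 1$); moreover $11N = 33 D^2 R \geq 33 R > 6R$; and the constant term~$6$ on the left is positive. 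Summing these three inequalities yields the claim. The whole proof is therefore a short explicit estimate, and the only mildly delicate point is confirming that the choice $N = 3D^2R$ is tight enough to make the cubic cancellation leave a strictly positive remainder.
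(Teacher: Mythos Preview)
Your approach is correct and essentially the same as the paper's: both expand the binomials, use $3RD^2N^2=N^3$ to cancel the leading terms, and then verify that the remainder $6N^2+11N+6-9RDN-6R$ is positive (the paper computes it exactly as $9R^2D^3(D-\tfrac12)+R(\tfrac{11}2D^2-1)+1$, while you bound term by term). Your displayed chain ``$54D^4R^2 \geq 27D^3R^2\cdot 2 \geq 9RDN\cdot\frac{2D}{1}$'' is garbled---the last quantity equals $54D^4R^2$ again---but the inequality you actually need, $6N^2=54D^4R^2\geq 27D^3R^2=9RDN$, follows immediately from $2D\geq 1$, so the conclusion stands.
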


\begin{proof}
The result follows from the equality
\begin{equation*}
\binom{N+3}{3} - R \binom{DN+2}{2} = 9 R^2 D^3\left( D - \frac{1}{2}\right)  + R \left( \frac{11}{2}D^2-1\right) +1 .
\end{equation*}
\end{proof}

The following result provides structured annihilating operators of~$f$
whose existence will be used in the proof of Theorem~\ref{THM:diagonal-2}.
It also provides degree bounds for all the announced annihilating operators,
of which only those concerning~$P$ will be used,
in the specific situation of Corollary~\ref{cor:bivariate-gessel} ($n=2$).

\begin{theorem}\label{THM:elim}
Let $f\in K[[\vx]]$ be a D-finite power series over~$K(\vx)$.
Then, there exists a non-zero annihilating operator~$P$ of~$f$ that satisfies
\begin{itemize}
\item $P\in K[x_3, \dots, x_n][x_1 x_2]\<T_{x_1, x_2}, D_{x_1, x_2}>$,
\item $P$~is of degree $O(d_f^2 r_f^2)$ in~$x_1x_2$,
of total degree $O(d_f^9r_f^8)$ in $x_3, \ldots, x_n$, and
of total degree $O(d_f^2 r_f^2)$ in $T_{x_1, x_2}, D_{x_1, x_2}$,
\end{itemize}
and for each $h \in \{ 3,\dots, n \}$, there exists a non-zero annihilating operator~$Q_h$ of~$f$ that satisfies
\begin{itemize}
\item $Q_h\in K[x_3, \dots, x_n][x_1 x_2]\<T_{x_1, x_2}, D_{x_h}>$,
\item $Q_h$~is of degree $O(d_f^2 r_f^3)$ in~$x_1x_2$,
of total degree $O(d_f^9r_f^{12})$ in $x_3, \ldots, x_n$, and of total degree $O(d_f^2 r_f^3)$ in $T_{x_1, x_2}, D_{x_h}$.
\end{itemize}
\end{theorem}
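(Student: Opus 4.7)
The plan is to construct $P$ and, for each $h \in \{3,\dots,n\}$, the operator $Q_h$ by a dimension-counting argument in the spirit of Lipshitz's reductions. In both cases, one realizes the desired annihilator as a non-zero $K$-linear combination of the basis monomials from Lemma~\ref{LM:basis} (for~$P$) or Lemma~\ref{LM:basis-3} (for~$Q_h$), and uses Lemma~\ref{LM:reduce-2} to reduce the Weyl-algebra expansion of that combination modulo~$J$ into a finite-dimensional target space. The two cases differ only in which set~$S\subseteq\{1,\dots,n\}$ is used in Definition~\ref{DEF:C} and in the calibration of the parameters.

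For~$P$, I take $S=\{1,2\}$ and fix two integer parameters $N$ and~$M$. Let $V_P$ denote the $K$-span of the monomials $(x_1x_2)^i x_3^{k_3}\cdots x_n^{k_n} T_{x_1,x_2}^j D_{x_1,x_2}^\ell$ with $i+j+\ell\leq N$ and $k_3+\cdots+k_n\leq M$; by Lemma~\ref{LM:basis}, $\dim_K V_P=\binom{N+3}{3}\binom{M+n-2}{n-2}$. Each such monomial, once expanded in the Weyl-algebra basis $\vx^{\aalpha}\Dx^{\bbeta}$, is a sum of terms of $D_S$-order at most $j+2\ell\leq 2N$ whose polynomial coefficients have total $\vx$-degree at most $O(N+M)$. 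Multiplying on the left by~$C^{2N}$ and iterating Lemma~\ref{LM:reduce-2}, I find that each $C^{2N}$-shifted monomial is congruent modulo~$J$ to an element of $\cH_{t,0}$ with $t=O(d_f N+M)$. The resulting $K$-linear reduction map sends $V_P$ into a space of $K$-dimension at most $|B|\binom{t+n}{n}\leq r_f^2\binom{O(d_fN+M)+n}{n}$.

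Next I choose $N=\Theta(d_f^2 r_f^2)$ and $M=\Theta(d_f^9 r_f^8)$. A computation in the same spirit as Observation~\ref{LM:ineq1}, now in the two parameters $N$ and~$M$, shows that the source dimension strictly exceeds the target dimension; pigeonhole then yields a non-zero $L\in V_P$ with $C^{2N}L\in J$. Since $J\subseteq\ann_{\bO_n}(f)$, we have $C^{2N}\cdot (L\cdot f)=0$ in $K[[\vx]]$, and because $K[[\vx]]$ is a domain with $C^{2N}\neq 0$ it follows that $L\cdot f=0$; setting $P:=L$ then gives the bounds announced for~$P$. The construction of each~$Q_h$ proceeds identically with $S=\{1,2,h\}$ and Lemma~\ref{LM:basis-3} in place of Lemma~\ref{LM:basis}, using the updated quantities $|B|\leq r_f^3$, $d_C\leq 3d_f$, and monomial $D_S$-order at most~$j+\ell$; tracking the corresponding inequality then yields the larger exponents announced for the~$Q_h$.

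The main technical obstacle is keeping a clean ledger of the degree inflation throughout the reduction. Each invocation of Lemma~\ref{LM:reduce-2} introduces a factor of~$C$ (adding $d_C$ to the $\vx$-degree) while trading one unit of $D_S$-order for a lower-order representative, and this step must be iterated $\Theta(N)$ times; the polynomial degrees therefore grow multiplicatively in $N$ and~$d_f$ and must stay compatible with the $M$-budget in $x_3,\dots,x_n$. Once these accountings are made fully precise, the required strict inequality between source and target dimensions takes exactly the shape of Observation~\ref{LM:ineq1}, and the exponents $O(d_f^2 r_f^2)$ and $O(d_f^9 r_f^8)$ (respectively $O(d_f^2 r_f^3)$ and $O(d_f^9 r_f^{12})$ for~$Q_h$) are the smallest ones compatible with the calibration.
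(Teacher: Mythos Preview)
Your overall architecture is sound---use Lemma~\ref{LM:reduce-2} to push $C^{2N}$ times each basis monomial into $\cH_{t,0}+J$, then kill by a dimension count---but the actual count you propose fails with the parameters you state, so the proof has a gap.

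Concretely, with $S=\{1,2\}$ your source has $K$-dimension $\binom{N+3}{3}\binom{M+n-2}{n-2}\sim \frac{N^3 M^{n-2}}{6(n-2)!}$, while your target $\cH_{t,0}$ with $t=\Theta(d_fN+M)$ has $K$-dimension $r_1r_2\binom{t+n}{n}\sim \frac{r_f^2(d_fN+M)^n}{n!}$. Taking $N=\Theta(d_f^2r_f^2)$ and $M=\Theta(d_f^9r_f^8)$ makes $M\gg d_fN$, so the ratio source$/$target behaves like $\binom{n}{2}N^3/(r_f^2M^2)=\Theta(d_f^{-12}r_f^{-12})\to 0$: the inequality goes the \emph{wrong way}. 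The ``computation in the spirit of Observation~\ref{LM:ineq1}'' that you defer to does not exist for these values of $N,M$; the pigeonhole argument collapses.

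The paper avoids this by doing the dimension count not over~$K$ but over the field $K(x_3,\dots,x_n)$: then $W_N=\span_{K(x_3,\dots,x_n)}\cH_{2N(d_1+d_2+1),0}$ has dimension only $r_1r_2\binom{2N(d_1+d_2+1)+2}{2}$, quadratic in~$N$, and $\dim V_N=\binom{N+3}{3}$ is cubic; Observation~\ref{LM:ineq1} with $N=3D^2R$ ($D=2(d_1+d_2+1)$, $R=r_1r_2$) gives $N=O(d_f^2r_f^2)$. The $O(d_f^9r_f^8)$ degree in $x_3,\dots,x_n$ is \emph{not} produced by a dimension count at all: it comes afterwards from Lemma~\ref{LM:deg} (Cramer's rule) applied to the resulting $K(x_3,\dots,x_n)$-linear system of size about $r_1r_2\binom{DN+2}{2}$ with entries of total degree $\leq DN$, yielding $\tdeg(p_{i,j,\ell})\leq DN\cdot r_1r_2\binom{DN+2}{2}=O(d_f^9r_f^8)$. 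The construction of~$Q_h$ is the same game with $S=\{1,2,h\}$.

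So the missing idea is precisely this two-stage mechanism: first a $K(x_3,\dots,x_n)$-linear pigeonhole to fix $N$, then Cramer to bound the coefficient degrees. (As a side remark, your $K$-level scheme \emph{could} be made to work with a very different calibration, roughly $M=\Theta(d_fN)$, and would then give a smaller exponent in $x_3,\dots,x_n$; but that is not the argument you wrote, and you would have to redo the inequality from scratch.)
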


\begin{proof}
First we prove the existence of the operator~$P$.
We apply the counting argument used in~\cite{Gessel1981, Lipshitz1988}.
Use Definition~\ref{DEF:C} with~$S = \{1,2\}$.
For any positive integer~$N$, set
\[ V_N = \text{span}_{K(x_3,\dots,x_n)}\left\{ C^{2 N}  (x_1 x_2)^i T_{x_1, x_2}^j D_{x_1, x_2}^\ell \, \mid  \, i+j+\ell \le N\right\} \]
and
\begin{equation*}
W_N = \text{span}_{K(x_3,\dots,x_n)}\mycH_{2N(d_1+d_2+1), 0} .
\end{equation*}
By degree considerations, for any integers $i,j,\ell$ satisfying $i+j+\ell\leq N$ we have
\begin{equation*}
(x_1 x_2)^i T_{x_1, x_2}^j D_{x_1, x_2}^\ell \in \cF_{j+2i, \, j+2\ell} \subseteq \mycH_{j+2i, \, j+2\ell} \subseteq \mycH_{2N, \, 2N} .
\end{equation*}
Note that $\tdeg(C) \le d_C = d_1+d_2$. Hence by Lemma~\ref{LM:reduce-2},
\begin{equation}\label{eq:ijl-reduce}
C^{2 N}  (x_1 x_2)^i T_{x_1, x_2}^j D_{x_1, x_2}^\ell \in \mycH_{2N(d_1+d_2+1), \,0} + J .
\end{equation}
Consequently, we have the inclusion
$V_N \subseteq W_N + K(x_3,\dots,x_n) \,J$ between $K(x_3,\dots,x_n)$-vector spaces.
Note the asymptotic estimates
\begin{equation*}
\dim_{K(x_3,\dots,x_n)}V_N = \binom{N+3}{3} = \Theta(N^{3}) ,
\end{equation*}
where the first equality is by Lemma~\ref{LM:basis}, and
\begin{equation*}
\dim_{K(x_3,\dots,x_n)} W_N = r_1 r_2 \binom{2N(d_1+d_2+1)+2}{2} = \Theta(N^2) .
\end{equation*}
Choosing sufficient large~$N$ results in $\dim(V_N) > \dim(W_N)$.
So, some non-zero element of $V_N$ is in $K(x_3,\dots,x_n)J $
and without loss of generality we can choose it in~$\bW_n \cap V_N$.
Observe that this operator has $C^{2N}$ as a left factor.
So, dividing by~$C^{2N}$ yields a non-zero annihilating operator of~$f$
in~$K[x_1 x_2, x_3, \dots, x_n] \< D_{x_1}, D_{x_2}>$.

To control the degree and order of such an annihilating operator,
we now make a more specific choice that will lead to the announced operator~$P$.
To this end,
we make \eqref{eq:ijl-reduce} explicit in the form
\begin{equation*}
C^{2N}(x_1x_2)^i T_{x_1, x_2}^j D_{x_1, x_2}^\ell
\in \ \ \sum_{\mathclap{\substack{i_1 < r_1 , \ i_2< r_2 , \\ k_1+k_2 \leq 2N(d_1 + d_2 + 1)}}}
q_{i, j, \ell, i_1, i_2, k_1, k_2} x_1^{k_1}x_2^{k_2}D_{x_1}^{i_1} D_{x_2}^{i_2} + J ,
\end{equation*}
for polynomials $q_{i, j, \ell, i_1, i_2, k_1, k_2}$ of $K[x_3, \ldots, x_n]$ of total degree bounded by $2N(d_1+d_2+1)$,
and we set up an ansatz of the form
\begin{equation}\label{eq:ansatz}
\begin{aligned}
C^{2N} P &= \sum_{i+j+\ell \leq N} p_{i,j,\ell} C^{2N}(x_1x_2)^i T_{x_1, x_2}^j D_{x_1, x_2}^\ell \\
&\in \ \  \sum_{\mathclap{\substack{i_1 < r_1 , \ i_2< r_2 , \\ k_1+k_2 \leq 2N(d_1 + d_2 + 1)}}}
q_{i_1, i_2, k_1, k_2} x_1^{k_1}x_2^{k_2}D_{x_1}^{i_1} D_{x_2}^{i_2} + J ,
\end{aligned}
\end{equation}
where the~$p_{i,j,\ell}$ are undetermined polynomials from~$K[x_3,\dots,x_n]$
and the resulting coefficients~$q_{i_1, i_2, k_1, k_2}$ are polynomials of $K[x_3, \ldots, x_n]$
given as linear combinations of the~$p_{i,j,\ell}$ by
\begin{equation*}
q_{i_1, i_2, k_1, k_2} = \sum_{i+j+\ell \leq N} p_{i,j,\ell} \, q_{i, j, \ell, i_1, i_2, k_1, k_2} .
\end{equation*}
After applying to~$f$ to obtain
\begin{equation*}
C^{2N} P(f) =
\ \ \sum_{\mathclap{\substack{i_1 < r_1 , \ i_2< r_2 , \\ k_1+k_2 \leq 2N(d_1 + d_2 + 1)}}}
q_{i_1, i_2, k_1, k_2} x_1^{k_1}x_2^{k_2}D_{x_1}^{i_1} D_{x_2}^{i_2}(f) ,
\end{equation*}
we can enforce $P(f) = 0$ by forcing each~$q_{i_1, i_2, k_1, k_2}$ to be zero.
This gives a linear system over $K(x_3, \ldots, x_n)$ with $\binom{N+3}{3}$ variables and a number~$S$ of equations that is
\[S := \dim_{K(x_3,\dots,x_n)} W_N = r_1 r_2 \binom{2N(d_1+d_2+1)+2}{2}.\]
Set $R:=r_1r_2$ and $D:= 2(d_1+d_2+1) \geq 2$.
By Observation~\ref{LM:ineq1}, we can choose $N := 3 D^2 R$
so as to get a system with more variables than equations and thus a system with a non-trivial solution.
Because the corresponding polynomial matrix is of size $S \times \binom{N+3}{3}$ with entries of total degree $2N(d_1+d_2+1)$,
by Lemma~\ref{LM:deg} we have, for a suitable non-zero solution~$(p_{i, j, \ell})$,
\[\text{tdeg}(p_{i, j, \ell}) \leq 2N(d_1+d_2+1)r_1r_2 \binom{2N(d_1+d_2+1)+2}{2} = O(d_f^9r_f^8), \]
where the total degree is with respect to $x_3, \ldots, x_n$.
This non-trivial solution leads to a non-zero annihilator $P\in K[x_1 x_2, x_3, \dots, x_n]\<T_{x_1, x_2}, D_{x_1, x_2}>$ of~$f$.
From the ansatz form~\eqref{eq:ansatz}, $P$~has
its degree in~$x_1x_2$ bounded by $N = O(d_f^2r_f^2)$
and its total degree in $T_{x_1, x_2}, D_{x_1, x_2}$ not exceeding $N = O(d_f^2 r_f^2)$.
This leads to the desired degree and order bounds for~$P$.

For each $ h \in \{3,\dots, n\} $, the proof of the existence of the operator $ Q_h $ is similar.
Using Definition~\ref{DEF:C} with $S = \{1,2,h\}$, we set
\[ V_N = \text{span}_{K(x_3,\dots,x_n)}\left\{ C^{N}  (x_1 x_2)^i T_{x_1, x_2}^j D_{x_h}^\ell \, \mid  \, i+j+\ell \le N\right\}, \]
and
\[ W_N = \text{span}_{K(x_3,\dots,x_n)} \mycH_{N(d_1+d_2+d_h+2), \,0}. \]
This time we derive $V_N \subseteq \mycH_{2N, \, N}$ (not $\mycH_{2N, \, 2N}$)
and we have the additional term~$d_h$ in $\tdeg(C) \le d_C = d_1+d_2+d_h$,
so that the analogue of~\eqref{eq:ijl-reduce} is
\begin{equation*}
C^N  (x_1 x_2)^i T_{x_1, x_2}^j D_{x_h}^\ell \in \mycH_{N(d_1+d_2+d_h+2), \,0} + J .
\end{equation*}
Set $ R:=r_1r_2r_h $ and $ D:= d_1+d_2+d_h+2 \geq 2 $.
We can still choose
\[ N := 3 D^2 R = 3 \,(d_1+d_2+d_h+2)^2 r_1 r_2 r_h. \]
Then by Observation~\ref{LM:ineq1}
\[\dim_{K(x_3,\dots,x_n)} V_N - \dim_{K(x_3,\dots,x_n)} W_N = \binom{N+3}{3} - R \binom{DN+2}{2} > 0. \]
Continuing as we did for~$P$, we obtain that there exists a non-zero operator
\[ Q_h\in K[x_1 x_2, x_3, \dots, x_n]\<T_{x_1, x_2}, D_{x_h}> \]
such that $ Q_h(f)=0 $.
By a similar argument, we have that $Q_h$~is of degree at most $N = O(d_f^2 r_f^3)$ in~$x_1x_2$, of total degree $O(d_f^9r_f^{12})$ in~$x_3, \ldots, x_n$, and of total degree at most $N = O(d_f^2 r_f^3)$ in $T_{x_1, x_2}, D_{x_h}$.
\end{proof}

After the preparation above, let us prove the diagonal theorem.

\begin{proof}[Proof of Theorem~\ref{THM:diagonal-2}]
Let $u_1, \dots, u_n$ be new variables.
Write $K\<\<u_1, \dots, u_n>>$ for the associative $K$-algebra over the free non-commutative monoid generated by $\lbrace u_1, \dots, u_n\rbrace$.
Assume that $f\in K[[\vx]]$ is D-finite over $K(\vx)$.
By Theorem~\ref{THM:elim}, there exists a non-zero operator~$P$ in~$K[x_1 x_2, x_3, \dots, x_n]\<T_{x_1, x_2}, D_{x_1, x_2}>$
and, for each $h \in \{ 3,\dots, n \}$, a non-zero operator~$Q_h$ in~$K[x_1 x_2, x_3, \dots, x_n]\<T_{x_1, x_2}, D_{x_h}>$
such that $P(f)=0$ and for each $ h \in \{ 3,\dots, n \} $, $ Q_h(f)=0 $.

We first show that there is a non-zero operator $\bar{P} \in K(x_1, x_3, \dots, x_n)\<D_{x_1}>$ such that $  \bar{P} (\Delta_{1,2} (f))=0$.
Recall that $ T_{x_1, x_2} $ commutes with $ x_1x_2 $ and $ D_{x_1, x_2} $. Consider the maximal integer $ s $ such that
\begin{equation}
P = T_{x_1, x_2}^{s} \tilde{P} \quad  \text{with} \quad \tilde{P}=\sum_{i=0}^{m} T_{x_1, x_2}^i A_i(x_1 x_2, x_3, \dots, x_n, D_{x_1, x_2})
\end{equation}
for some~$A_i \in K\<\<u_1, \dots, u_n>>$, where $ A_i (\sigma_1, \dots, \sigma_n) $ denotes the evaluation at $ u_1 = \sigma_1, \dots, u_n = \sigma_n $ of~$A_i$ for elements $ \sigma_1, \dots, \sigma_n \in \bW_n $.
	The maximality of $ s $ implies $ A_0 \neq 0 $. By Lemma~\ref{LEM:xy}, we have
	\begin{equation} \label{eq3}
		\tilde{P}(f)=\sum_{i=0}^{m} T_{x_1, x_2}^i A_i(f) = g(x_1 x_2, x_3, \dots, x_n)
	\end{equation}
	for some power series $g$ in $ n-1 $ variables.
	Since $ \Delta_{1,2}T_{x_1, x_2}=0 $ and by Proposition \ref{PROP:diagonal}, taking the diagonal of the two sides of \eqref{eq3} yields
	$$  \Delta_{1,2}\tilde{P}(f)=A_0(x_1, x_3, \dots, x_n, D_{x_1}\theta_{x_1})(\Delta_{1,2}(f)) = g(x_1,x_3,\dots,x_n).$$
	The operator $ H := A_0(x_1, x_3, \dots, x_n, D_{x_1}\theta_{x_1}) $ is non-zero, since $$ x_1, x_3, \dots, x_n, D_{x_1}\theta_{x_1} $$ are linearly independent over $K$ by Lemma \ref{LM:basis-2}.
	Because $ f $ is D-finite over $ K(\vx) $, the series $ \tilde{P}(f) $ is also D-finite over $ K(\vx) $. Hence $ g(x_1,x_3,\dots,x_n) = \tilde{P}(f) |_{x_2=1} $ is D-finite over $ K(x_1, x_3, \dots, x_n) $ by Lemma \ref{LEM:dfinite-alg}.
	Therefore there exists a non-zero operator $G \in K(x_1, x_3, \dots, x_n)\<D_{x_1}>$ such that $G(g)=0$.
	Then the operator $ \bar{P} := G H $ is non-zero and $  \bar{P} (\Delta_{1,2} (f))=0$.

	The existence of a non-zero operator $ \bar{Q}_h \in K(x_1, x_3, \dots, x_n)\<D_{x_h}> $ such that $\bar{Q}_h(\Delta_{1,2} (f))=0$ for each $ h \in \{ 3,\dots, n \} $ is proved similarly. The only difference is the variation in the formula
	$$
	\begin{aligned}
		\Delta_{1,2}{A}_0(x_1x_2, x_3, \dots, x_n, D_{x_h}) (f ) &=  A_0(x_1, x_3, \dots, x_n, D_{x_h}) \Delta_{1,2} (f) \\ &=  g(x_1,x_3,\dots,x_n).
	\end{aligned}
    $$
    Hence we conclude that $\Delta_{1,2}(f)$ is D-finite over $K(x_1, x_3, \dots, x_n)$.
\end{proof}

The following result is very much inspired by~\cite{Kauers-2014-BDC},
which we merely generalize to the bivariate situation.
The reader will pay attention that it combines
bounds about a function~$f$ provided by a system of equations,
each in a single derivative like in Definition~\ref{DEF:bound},
with bounds on a (potentially) partial differential operator~$L$,
to derive bounds on equations in a single derivative for~$L(f)$.

\begin{lemma}\label{lem:bound-for-L(f)}
Fix $n=2$ and a bivariate D-finite function~$f$.
Given a system of linear differential equations with known order and degree bounds $r_f$ and~$d_f$
exhibiting the D-finiteness of~$f$,
as well as an operator~$L$ of order~$r_L$ and degree~$d_L$,
there exists a system of linear differential equations
exhibiting the D-finiteness of~$g = L(f)$,
whose order~$r_g$ and degree~$d_g$ are bounded by
\begin{equation}\label{eq:bound-for-L(f)}
d_g \leq (d_L+2d_f(r_f^2+r_L)) r_f^2
\quad\text{and}\quad
r_g \leq r_f^2 .
\end{equation}
\end{lemma}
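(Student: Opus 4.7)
The plan is to construct, for each $i \in \{1,2\}$, a non-zero operator $M_i \in K[\vx]\<D_{x_i}>$ that annihilates $g = L(f)$ and satisfies the bounds in~\eqref{eq:bound-for-L(f)}. By symmetry between the variables, I would treat only the case of $M_1$. The central observation is that the $K(\vx)$-vector space
\[ V := \span_{K(\vx)}\{D_{x_1}^a D_{x_2}^b(f) : 0 \leq a, b < r_f\} \]
has dimension at most $r_f^2$ and is stable under both $D_{x_1}$ and $D_{x_2}$. Stability follows from the given univariate annihilators $L_1 \in K[\vx]\<D_{x_1}>$ and $L_2 \in K[\vx]\<D_{x_2}>$ of $f$, each of order $\leq r_f$ and degree $\leq d_f$: these let us rewrite $D_{x_1}^{r_f}(f)$ and $D_{x_2}^{r_f}(f)$ as $K(\vx)$-combinations of lower-order derivatives, which by the Leibniz rule and commutativity of $D_{x_1}$ with $D_{x_2}$ propagates to all cross derivatives of basis elements.

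Since $L$ has total order $\leq r_L$ and coefficients of total degree $\leq d_L$, the element $g = L(f)$ lies in $V$, and so do all iterated derivatives $D_{x_1}^k(g)$. In particular, the $r_f^2 + 1$ elements $g, D_{x_1}(g), \dots, D_{x_1}^{r_f^2}(g)$ are $K(\vx)$-linearly dependent, which already yields the order bound $r_g \leq r_f^2$.

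To make this dependence polynomial and thus control degrees, I would maintain reduced expressions of the form
\[ Q(\vx) \cdot D_{x_1}^k(g) = \sum_{0 \leq a, b < r_f} P_{k,a,b}(\vx)\, D_{x_1}^a D_{x_2}^b(f), \]
with a common denominator $Q$ that is a suitable power of $\lc(L_1)\lc(L_2)$. Each use of $L_i$ to lower a $D_{x_i}$-order by one multiplies the denominator by a new factor $\lc(L_i)$ (of degree $\leq d_f$) and adds at most $d_f$ to the total degree of every numerator. Accounting for the $\leq r_L$ reductions implicit in the initial expansion of $L(f)$ and the $\leq r_f^2$ further reductions needed after differentiating $g$ up to $r_f^2$ times in $x_1$, a careful count bounds $\deg_{\vx}(P_{k,a,b})$ by $d_L + 2 d_f(r_L + r_f^2)$, the factor $2$ reflecting the presence of reductions in both variables.

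Stacking these coefficient vectors for $0 \leq k \leq r_f^2$ then yields an $r_f^2 \times (r_f^2 + 1)$ polynomial matrix of total degree at most $d_L + 2 d_f(r_L + r_f^2)$; Lemma~\ref{LM:deg} delivers a non-zero polynomial null-vector $(u_0, \dots, u_{r_f^2})$ of total degree at most $r_f^2 \cdot (d_L + 2 d_f(r_L + r_f^2))$, and $M_1 := \sum_k u_k D_{x_1}^k$ annihilates $g$ with the claimed bounds; swapping the roles of $x_1$ and $x_2$ would then produce $M_2$. The main obstacle is the degree bookkeeping: each reduction couples a denominator update with additive growth of every numerator, and each differentiation $D_{x_1}$ simultaneously raises the $D_{x_1}$-order of basis elements (forcing further reductions) and differentiates the polynomial coefficients already built, so the additive-versus-multiplicative accounting must be done carefully to hit the announced bound rather than an inflated one.
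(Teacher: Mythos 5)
Your proposal follows essentially the same route as the paper: reduce $D_{x_1}^k L$ modulo the left ideal generated by $L_1$ and~$L_2$ to express everything over the $r_1 r_2$ basis elements $D_{x_1}^a D_{x_2}^b(f)$ with $a<r_1$, $b<r_2$, clear denominators by a power of $\lc(L_1)\lc(L_2)$, and then apply Lemma~\ref{LM:deg} to the resulting $r_1r_2 \times (r_1r_2{+}1)$ polynomial system. The one ``obstacle'' you flag --- the additive-versus-multiplicative degree bookkeeping --- is exactly what the paper's Lemma~\ref{LM:reduce-2} (with $u = r_A + r_L$, using $C = \lcm(\lc(L_1),\lc(L_2))$ of degree $d_C \leq 2d_f$) packages cleanly, and it yields precisely the intermediate bound $\deg P_{k,a,b} \leq d_L + d_C(r_A + r_L) \leq d_L + 2d_f(r_f^2 + r_L)$ that you conjectured.
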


\begin{proof}
Use Definition~\ref{DEF:C} when~$S = \{1,2\}$.
We look for non-zero operators~$A \in K[x_1 x_2] \< D_{x_1}>$ annihilating~$g$,
that is, such that $(AL)(f) = 0$.
Write $r_A$ and~$d_A$ for the order and degree of a potential~$A$.
For $l \in K[x_1,x_2]$,
if $\deg(l) \leq d_L$, $0\leq k\leq r_A$, and $0\leq i+j\leq r_L$, then, by Lemma~\ref{LM:reduce-2} we have
\begin{equation*}
C^{r_A+r_L} D_{x_1}^k l(x_1,x_2) (D_{x_1}^iD_{x_2}^j) \in \mycH_{d_L+d_C(r_A+r_L),0} + J ,
\end{equation*}
hence for a potential $A = \sum_{k=0}^{r_A} a_k(x_1,x_2) D_{x_1}^k$ we need to have
\begin{equation*}
C^{r_A+r_L} (AL)(f) = \sum_{0\leq i<r_1, \, 0\leq j<r_2} \sum_{k=0}^{r_A} a_k q_{i,j,k} D_{x_1}^iD_{x_2}^j(f)
\end{equation*}
for explicit polynomials~$q_{i,j,k} \in K[x_1,x_2]$ of degree at most $d_L+d_C(r_A+r_L)$.
Now, for this to be zero, the $r_A+1$ polynomial coefficients of~$A$
need to cancel the $r_1r_2 = O(r_f^2)$ equations obtained
by equating the coefficients of the $K[x_1,x_2]$-linearly independent elements $D_{x_1}^iD_{x_2}^j(f)$ that appear in the sum.
Setting $r_A = r_1r_2$ ensures a non-zero solution exist,
and Lemma~\ref{LM:deg}~guarantees there exists a solution
with degree~$d_A$ at most $(d_L+d_C(r_A+r_L)) r_A$.
Looking for~$A \in K[x_1 x_2] \< D_{x_2}>$ leads to the same bounds,
which leads to~\eqref{eq:bound-for-L(f)}.

\end{proof}

\begin{cor}\label{cor:bivariate-gessel}
Let $f\in K[[x_1,x_2]]$ be D-finite over $K(x_1,x_2)$.
Then $\Delta_{1,2}(f)$ is D-finite over $K(x_1)$.
In addition, there exists a non-zero operator~$\bar P$ that satisfies
$\bar P(\Delta_{1,2}(f)) = 0$ and
\begin{equation*}
{\deg(\bar P) = O(d_f^3 r_f^{4})}
\quad\text{and}\quad
{\ord(\bar P) = O(d_f^2 r_f^2)} .
\end{equation*}
\end{cor}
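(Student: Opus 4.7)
The plan is to carry out the proof of Theorem~\ref{THM:diagonal-2} in the bivariate case $n=2$ while tracking degrees and orders at every step, then combine the result with Lemma~\ref{lem:bound-for-L(f)} to get the univariate bounds. First, Theorem~\ref{THM:elim} applied with $n=2$ (where the coefficient ring $K[x_3,\dots,x_n]$ degenerates to~$K$) delivers a non-zero annihilator $P \in K[x_1 x_2]\langle T_{x_1,x_2}, D_{x_1,x_2}\rangle$ of~$f$, with degree $N_1 = O(d_f^2 r_f^2)$ in $x_1 x_2$ and total degree $N_2 = O(d_f^2 r_f^2)$ in the Ore generators. Factoring out the maximal power of $T_{x_1,x_2}$ writes $P = T_{x_1,x_2}^s \tilde P$ with $\tilde P = \sum_{i=0}^{m} T_{x_1,x_2}^i A_i(x_1 x_2, D_{x_1,x_2})$ and $A_0 \ne 0$, and $\tilde P$ inherits the same bounds.

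Exactly as in Theorem~\ref{THM:diagonal-2}, Lemma~\ref{LEM:xy} applied to $\tilde P(f)$ yields $\tilde P(f) = g(x_1 x_2)$ for some univariate power series $g$, and Proposition~\ref{PROP:diagonal} then gives
\begin{equation*}
H(\Delta_{1,2}(f)) = g(x_1) \quad \text{with} \quad H := A_0(x_1, D_{x_1}\theta_{x_1}) .
\end{equation*}
Since $A_0$ has degree at most $N_1$ in $u_1$ and at most $N_2$ in $u_2$, and $D_{x_1}\theta_{x_1} = x_1 D_{x_1}^2 + D_{x_1}$ has order~$2$ and $x_1$-degree~$1$, expanding $H$ into canonical form shows $\ord(H) \le 2N_2 = O(d_f^2 r_f^2)$ and $\deg(H) \le N_1 + N_2 = O(d_f^2 r_f^2)$. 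Lemma~\ref{LM:basis-2} guarantees $H \neq 0$.

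To produce a univariate annihilator $G$ of $g(x_1)$, view $\tilde P$ as an element of the Weyl algebra $\bW_2$. Expanding $(x_1x_2)^i T_{x_1,x_2}^j D_{x_1,x_2}^\ell$ into canonical form yields $r_L := \ord(\tilde P) \le 2N_2 = O(d_f^2 r_f^2)$ and $d_L := \deg(\tilde P) \le 2N_1 + N_2 = O(d_f^2 r_f^2)$. Lemma~\ref{lem:bound-for-L(f)} applied with $L := \tilde P$ then provides an operator in $K[x_1,x_2]\langle D_{x_1}\rangle$ annihilating $\tilde P(f) = g(x_1 x_2)$, of order at most $r_f^2$ and degree at most $(d_L + 2d_f(r_f^2 + r_L))\,r_f^2 = O(d_f^3 r_f^4)$. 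Specializing at $x_2 = 1$ (which commutes with $D_{x_1}$) produces $G \in K[x_1]\langle D_{x_1}\rangle$ with the same bounds, annihilating $g(x_1)$.

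Finally, $\bar P := GH$ is an annihilator of $\Delta_{1,2}(f)$, non-zero because the univariate Weyl algebra is a domain. The standard estimates $\ord(GH) \le \ord(G) + \ord(H)$ and $\deg(GH) \le \deg(G) + \deg(H)$ (the latter from \eqref{EQ:lin}) yield
\begin{equation*}
\ord(\bar P) = O(d_f^2 r_f^2) \quad \text{and} \quad \deg(\bar P) = O(d_f^3 r_f^4) ,
\end{equation*}
as claimed. The main bookkeeping obstacle will be the careful translation of $\tilde P$ and $A_0$ between the Ore-style generators ($T_{x_1,x_2}$, $D_{x_1,x_2}$, $D_{x_1}\theta_{x_1}$) and canonical Weyl-algebra form so that the order and degree bounds can be propagated cleanly; once this conversion is controlled, Lemma~\ref{lem:bound-for-L(f)} supplies the dominant $O(d_f^3 r_f^4)$ contribution to $\deg(\bar P)$.
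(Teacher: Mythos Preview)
Your proof is correct and follows essentially the same route as the paper: extract $P$, $\tilde P$, $H$, and $g$ as in the proof of Theorem~\ref{THM:diagonal-2}, invoke Lemma~\ref{lem:bound-for-L(f)} to bound an annihilator $G$ of $g$, and set $\bar P = GH$; the paper phrases the application of Lemma~\ref{lem:bound-for-L(f)} as being ``to~$f$ and the operator~$H$'' rather than to~$\tilde P$, but since $\tilde P$ and $H$ share the same order and degree bounds the arithmetic is identical, and your choice of $L=\tilde P$ is in fact the logically cleaner one (it is $\tilde P(f)$, not $H(f)$, that equals $g(x_1x_2)$). The one point to tighten is the specialization at $x_2=1$: the resulting operator could a priori vanish, so first divide the bivariate annihilator by the highest power of the central element $x_2-1$ (which is a unit in $K[[x_1,x_2]]$, so the quotient still kills $g(x_1x_2)$ and has no larger degree or order) before setting $x_2=1$.
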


\begin{proof}
The first statement is just Theorem~\ref{THM:diagonal-2} in the case~$n=2$.
For the degree bounds, we continue in the context of the proof of Theorem~\ref{THM:diagonal-2}.
Specifically, we have found:
\begin{itemize}
\item an operator $\tilde P = \tilde P(x_1x_2,T_{x_1,x_2},D_{x_1,x_2})$ that is a factor of an operator~$P$
that we obtained by Theorem~\ref{THM:elim} and therefore satisfies
that its degree in~$x_1x_2$ and its degree in~$D_{x_1, x_2}$ are both~$O(d_f^2 r_f^2)$,
\item a univariate power series~$g$ such that $\tilde P(f) = g(x_1x_2)$,
\item a non-zero operator~$H = H(x_1,D_{x_1}\theta_{x_1})$ such that
  $H(x_1x_2,D_{x_1,x_2})$~is the coefficient of~$T_{x_1,x_2}^0$ in~$\tilde P$
  and $H(\Delta_{1,2}(f)) = g(x_1)$.
\end{itemize}
By construction, both $\tilde P$ and~$H$ admit the same bounds on order and degree as~$P$,
in particular, both $\ord(H)$ and~$\deg(H)$ are in~$O(d_f^2 r_f^2)$.
Now, Lemma~\ref{lem:bound-for-L(f)} applies to the D-finite function~$f$ and the operator~$H$
to prove the existence of a non-zero annihilator $G \in K[x_1]\<D_{x_1}>$ of~$g$ satisfying
\begin{equation*}
\deg(G) \leq (\deg(H)+2d_f(r_f^2+\ord(H))) r_f^2 = O(d_f^3 r_f^4)
\quad\text{and}\quad
\ord(G) \leq r_f^2
\end{equation*}
as a consequence of~\eqref{eq:bound-for-L(f)}.
Setting $\bar L = GH$ and observing that $H$~has lower bounds than~$g$
gives the announced result.
\end{proof}

\begin{remark}
It is unsatisfactory
that we could not find and apply a one-stage variant of Gessel's approach,
especially in view of the bivariate case
in which it outperforms Lipshitz's approach that is developed in the next section.
After this work, it would still be of interest to derive such a direct variant.
\end{remark}

\section{Lipshitz's method for bounds of diagonal}\label{SEC:bound}

In this section,
we analyze the method of Lipshitz \cite{Lipshitz1988}
and we make specific choices in it
so as to construct annihilating operators of a diagonal
and to derive upper bounds on their order and degree.

Let us provide definitions that generalize those of Section~\ref{SECT:intro}.
Given integers $n$ and~$m$ satisfying $0 \leq m \leq n-1$,
we use the notation $\vs$ for $s_1, \dots, s_m$ and $\hx$ for $x_{m+1}, \dots , x_n$.
In particular, the list~$\vs$ is empty if~$m=0$,
which was the setting in Section~\ref{SECT:intro}.
The variable~$x_{m+1}$ is denoted by~$t$ if $m \geq 1$:
in this new situation,
our goal is to take a diagonal with respect to $\vs,t = s_1,\dots,s_m,x_{m+1}$,
keeping $\hx = x_{m+2},\dots,x_n$ as parameters.
For primary diagonals there is a single~$s_i$ ($m=1$),
and we simply denote~$s_1$ by~$s$.
In other words, we have:
\begin{equation*}
\vs, \hx = \begin{cases}
x_1, \dots, x_n & \text{if} \quad m = 0 , \\
s, t \, (= x_2), x_3, \dots, x_n & \text{if} \quad m = 1 , \\
s_1, \dots, s_m, t \, (= x_{m+1}), x_{m+2}, \dots, x_n & \text{if} \quad m \geq 2 .
\end{cases}
\end{equation*}

The definitions of~$\tau$ that will be needed,
\eqref{EQ:simple-tau}~in the present section
and \eqref{EQ:gen-tau} in Section~\ref{SECT:single-step},
motivate that we accommodate series with negative exponents
by defining
\begin{equation*}
M := \bigcup_{k \in \bN} \bigoplus_{|\aalpha| + |\bbeta| \geq -k} K \vs^\aalpha \hx^\bbeta \subseteq K^{\bZ^m \times \bN^{n-m}} ,
\end{equation*}
where $\aalpha := (\alpha_1,\dots,\alpha_m) \in \bZ^m$
and $\bbeta := (\beta_{m+1},\dots,\beta_n) \in \bN^{n-m}$.
This set~$M$ is a module over $K[\vs, \hx] \< \vD_s, \vD_{\hx} >$,
but it is not a $K(\vs, \hx)$-vector space.
If $m = 0$,
then $\vx = \hx$ and $M$~is just the ring $K[[\vx]]$ of formal power series.

\begin{definition}[D-finiteness]\label{DEF:dfinite2}
An element $F\in M$ is \emph{D-finite} over $K(\vs, \hx)$
if the $K(\vs, \hx)$-vector space generated by the derivatives of~$F$
in $\cT := K(\vs, \hx) \otimes_{K[\vs, \hx]}M $ is finite-dimensional,
after identifying each element $m \in M$ with $1\otimes m \in \cT$.
\end{definition}

The reader will pay attention to the redefinition of a number of quantities in Sections \ref{SECT:Ds-Dt} and~\ref{SECT:Ds-Dx},
including $M$, $S$, $B$, $C$, $d_C$, $R$, $N$, $\cG_N$, $V_N$, $W_N$, $\phi$.

\subsection{Bounds for primary diagonal}

We analyze the behavior of the primary diagonal operator~$\Delta_{2,1}$ and derive the following theorem,
which gives bounds on order and degree for linear differential operators that annihilate $\Delta_{2,1}(f)$.
The rest of the section consists of the proof of this theorem,
with the bounds~\eqref{EQ:bounds-on-P-alpha} proven by Lemma~\ref{LEM:exists-P-alpha}
and the bounds~\eqref{EQ:bounds-on-P-h-alpha} proven by Lemma~\ref{LEM:exists-P-h-alpha}.

\begin{theorem}\label{THM:primary bound}
Let $f\in K[[\vx]]$ be {D-finite} over $K(\vx)$ and let $ d_i, f_i, d_f, r_f $ be as in Definition~\ref{DEF:bound}.
Then, there exists a non-zero annihilating operator~$P_\alpha$ of~$\Delta_{2,1}(f)$
in $K[t, x_3, \dots, x_n]\<D_t>$ that satisfies
\begin{equation}\label{EQ:bounds-on-P-alpha}
\begin{aligned}
\deg(P_\alpha) &\leq 8 (d_1+d_2+1)^2 (r_1 r_2)^2 (8(d_1+d_2+1)^2r_1r_2+1) = O(d_f^{4} r_f^6) , \\
\ord(P_\alpha) &\leq 4(d_1+d_2+1)\,r_1 r_2= O(d_f r_f^2) ,
\end{aligned}
\end{equation}
and for each $h \in \{3,\dots,n\}$, there exists a non-zero annihilating operator~$P_{h,\alpha_h}$ of~$\Delta_{2,1}(f)$
in $K[t, x_3, \dots, x_n]\<D_{x_h}>$ that satisfies
\begin{equation}\label{EQ:bounds-on-P-h-alpha}
\begin{aligned}
\deg(P_{h, \alpha_h}) &\leq 8 (d_1+d_2 +d_h +1)^2 (r_1 r_2 \, r_h)^2 (8(d_1+d_2+d_h+1)^2r_1r_2 \, r_h+1) \\
  &\qquad = O(d_f^{4} r_f^9), \\
\ord(P_{h, \alpha_h}) &\leq 4(d_1+d_2+d_h+1)\,r_1 r_2 \, r_h= O(d_f r_f^3).
\end{aligned}
\end{equation}
\end{theorem}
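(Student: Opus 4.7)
The plan is to follow Lipshitz's strategy: recast the primary diagonal as the extraction of the $s^{0}$-coefficient of a Laurent-type series, and then establish the existence of the required annihilators through a counting argument mirroring the proof of Theorem~\ref{THM:elim}. Concretely, I would substitute $x_{1}=s$ and $x_{2}=t/s$ and define $F(s,t,x_{3},\dots,x_{n}):=f(s,t/s,x_{3},\dots,x_{n})$, which lies in the Laurent-extended module~$M$ of Section~\ref{SEC:bound}, and a direct expansion check shows $\Delta_{2,1}(f)=[s^{0}]F$, the constant coefficient with respect to~$s$. The chain rule translates the derivations as $D_{x_{1}}\leftrightarrow D_{s}+(t/s)D_{t}$ and $D_{x_{2}}\leftrightarrow sD_{t}$, while $D_{x_{h}}$ is left unchanged for $h\geq 3$. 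After clearing $s$-denominators by multiplication by a suitable power of~$s$, each~$L_{i}$ annihilating~$f$ becomes an operator $\tilde{L}_{i}$ annihilating~$F$ in $K[s,t,\hx]\<D_{s},D_{t},D_{x_{h}}>$, with order preserved and degree inflated by at most~$O(d_{i}+r_{i})$.

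I would then establish the existence of~$P_{\alpha}$ (proving Lemma~\ref{LEM:exists-P-alpha}) by running the counting argument in the subalgebra $K[t,\hx]\<\theta_{s},D_{t}>$, with Definition~\ref{DEF:C} specialised to~$S=\lbrace 1,2\rbrace$; analogously for $P_{h,\alpha_{h}}$ in $K[t,\hx]\<\theta_{s},D_{x_{h}}>$ with $S=\lbrace 1,2,h\rbrace$. The pivotal structural observation is that $\theta_{s}=sD_{s}$ preserves the $s$-grading of~$M$, acting as scalar multiplication by the exponent on each Laurent component $F_{k}\in K[[t,\hx]]$; consequently, any element $\hat{P}(\theta_{s},t,\hx,D_{t})$ of the target subalgebra annihilating~$F$ decomposes as $\sum_{k}\hat{P}(k,t,\hx,D_{t})F_{k}\,s^{k}=0$, forcing $\hat{P}(k,t,\hx,D_{t})F_{k}=0$ for every $k\in\bZ$. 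Specialising $\theta_{s}\mapsto 0$ then produces an annihilator of~$F_{0}=\Delta_{2,1}(f)$, provided this specialisation is nonzero. The counting step itself combines Lemma~\ref{LM:reduce-2}, Observation~\ref{LM:ineq1}, and Lemma~\ref{LM:deg} in the same pattern as for Theorem~\ref{THM:elim}, taking $N$ of order $d_{f}r_{f}^{2}$ (resp.\ $d_{f}r_{f}^{3}$) so as to force $\dim V_{N}>\dim W_{N}$.

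Should the constructed~$\hat{P}$ happen to vanish at $\theta_{s}=0$, I would factor out the maximal left power $\theta_{s}^{\alpha}$ dividing it, write $\hat{P}=\theta_{s}^{\alpha}\cdot P_{\alpha}$ (matching the notation of the theorem), and deduce from the injectivity of~$\theta_{s}$ on the nonzero-graded components of~$M$ that $P_{\alpha}F\in K[[t,\hx]]$; this residual function is then D-finite in~$(t,\hx)$ by an evaluation argument in the spirit of Lemma~\ref{LEM:dfinite-alg} (evaluating at $s=1$ to remove the phantom~$s$-dependence), so composing $P_{\alpha}(0,t,\hx,D_{t})$ with an annihilator of this residue in $K[t,\hx]\<D_{t}>$ yields the genuine annihilator of~$F_{0}$ claimed by the theorem. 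The main obstacle I anticipate is the careful bookkeeping of the degree and order inflation through the four stages: (i)~the chain-rule substitution and $s$-clearing, (ii)~the $V_{N}$--$W_{N}$ matrix-size estimates in the counting argument, (iii)~the Cramer bound of Lemma~\ref{LM:deg}, and (iv)~the final composition with the residual annihilator, so as to land exactly on the announced polynomial bounds in~$d_{f}$ and~$r_{f}$.
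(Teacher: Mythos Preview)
Your substitution and the ensuing counting argument are on the right track and broadly match the paper's Sections~\ref{SECT:Ds-Dt} and~\ref{SECT:Ds-Dx}: the same $V_N,W_N$ dimension comparison with $N=O(d_f r_f^2)$ (resp.\ $O(d_f r_f^3)$) produces a nonzero annihilator~$\hat P$ of the transformed series with the correct order and degree. The gap is in your extraction step. You propose to work with $\theta_s=sD_s$, exploiting that it preserves the $s$-grading, and to specialise $\theta_s\mapsto 0$; but when this specialisation vanishes you must factor $\hat P=\theta_s^\alpha\tilde P$, observe $\tilde P(F)=g\in K[[t,\hx]]$, and then \emph{compose} with a separate annihilator~$G$ of~$g$. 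That last step is fatal for the quantitative claim: Lemma~\ref{LEM:dfinite-alg} is purely existential, and even a quantitative substitute in the style of Lemma~\ref{lem:bound-for-L(f)} would feed back the already-inflated data of~$g$ (an image of~$F$ under an operator of order~$N$), yielding bounds strictly larger than~\eqref{EQ:bounds-on-P-alpha}--\eqref{EQ:bounds-on-P-h-alpha}. Since there is no a~priori reason for the $\theta_s^0$-coefficient of the kernel element produced by the linear-algebra step to be nonzero, your step~(iv) is not a corner case but the generic one.

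The paper sidesteps this entirely by a different extraction. It works with $D_s$ rather than $\theta_s$, divides by an extra~$s$ so that the diagonal sits at~$s^{-1}$ in $\sigma(f)=\tau(f)/s$, and then takes the \emph{lowest} $D_s$-coefficient~$P_\alpha$ of the constructed operator~$P=\sum_{j\geq\alpha}P_jD_s^j$. The point (Lemma~\ref{LEM:coeff-extraction-univ}) is that $D_s^j$ sends $s^{-1}$ to $(-1)^j j!\,s^{-1-j}$, while sending $s^{\geq 0}$ to~$s^{\geq 0}$ and $s^{\leq -2}$ to~$s^{\leq -2-j}$; hence extracting $[s^{-1-\alpha}]$ from $P(\sigma(f))=0$ isolates exactly the contribution of~$P_\alpha$ acting on $[s^{-1}]\sigma(f)=\Delta_{2,1}(f)$. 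Thus $P_\alpha$ is automatically nonzero (by minimality of~$\alpha$) and annihilates the diagonal directly, inheriting the bounds of~$P$ with no residual and no composition. In effect your plan grafts a Gessel-type extraction (cf.\ Section~\ref{SECT:diagonal} and Corollary~\ref{cor:bivariate-gessel}) onto the Lipshitz substitution; it proves D-finiteness, but to land on the stated constants you need the $D_s$ trick in place of the $\theta_s$ one.
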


We specialize our setting by choosing~$m=1$,
that is, we make $\vs, \hx = s, t, x_3, \dots, x_n$.
We aim to refine Lipshitz's proof \cite[Lemma~3]{Lipshitz1988}
of existence of annihilating operators in $K[\hx]\<D_s,D_{x_i}>$
for $i=2,\dots,n$.
Recall the notation $\cS = K(\vx) \otimes_{K[\vx]} K[[\vx]]$
from the introduction.
We define two maps $\sigma$ and~$\tau$ from~$\cS$ to~$M$ by
\begin{equation}\label{EQ:simple-tau}
\tau(h(\vx)) = h\left(s, \frac{t}{s}, x_3, \dots, x_n\right)
\quad\text{and}\quad
\sigma(h(\vx)) = \frac{\tau(h(\vx))}{s} .
\end{equation}
Hence, $\tau$ is a ring morphism and we have
\begin{equation}\label{EQ:non-morph}
\sigma (gh) = \tau(g) \sigma(h)
\quad\text{for any $g,h$ in~$\cS$} .
\end{equation}

\begin{lemma}\label{LEM:coeff-extraction-univ}
Let $P$ be any non-zero operator
\begin{equation}\label{EQ:def-P-univ}
P = P(\hx; D_t, D_s) = \sum_{j = \alpha}^{\beta} P_j(\hx; D_t) D_s^j
\in K[\hx]\<D_t, D_s>
\end{equation}
for which~$P_\alpha \neq 0$,
and let $g \in \sum_{i \in \bZ} g_i(\hx) s^i$ be any element of~$M$.
Then, the coefficient of~$s^{-1-\alpha}$ in~$P(g)$
is~$P_\alpha(g_{-1})$.

Similarly, for any $h \in \{3, \dots, n\}$, if $P_h$~is a non-zero operator
\begin{equation}\label{EQ:def-P-h-univ}
P_h = P_h(\hx; D_{x_h}, D_s) = \sum_{j = \alpha_h}^{ \beta_h} P_{h,j}(\hx; D_{x_h}) D_s^j
\in K[\hx]\<D_{x_h}, D_s> ,
\end{equation}
for which~$P_{h,\alpha_h} \neq 0$,
then the coefficient of~$s^{-1-\alpha}$ in~$P_h(g)$
is~$P_{\alpha_h}(g_{-1})$.
\end{lemma}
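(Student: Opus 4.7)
\medskip
\noindent\textbf{Proof plan for Lemma~\ref{LEM:coeff-extraction-univ}.}
The plan is to decompose $P(g)$ into a doubly indexed sum and to isolate the unique surviving term with $s$-exponent equal to $-1-\alpha$. Writing $g = \sum_{i \geq -k_0} g_i(\hx)\, s^i$ for some $k_0$ (this is permitted because $g \in M$), and using the fact that every coefficient $P_j(\hx; D_t)$ involves only $\hx$ and $D_t$ and therefore commutes with every power of~$s$, I would expand
\begin{equation*}
P(g) \; = \; \sum_{j=\alpha}^{\beta} \sum_{i \geq -k_0} [i]_j \, P_j(\hx; D_t)\bigl(g_i(\hx)\bigr) \, s^{i-j},
\end{equation*}
where $[i]_j := i(i-1)\cdots(i-j+1)$ denotes the falling factorial coming from $D_s^j(s^i) = [i]_j\, s^{i-j}$.

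Next I would pick out the coefficient of~$s^{-1-\alpha}$, which corresponds to pairs $(i,j)$ with $i = j-1-\alpha$. The key observation is the vanishing of $[i]_j$ whenever $j > \alpha$: in that range, $i = j-1-\alpha$ is a non-negative integer strictly less than~$j$, so one of the factors in $[i]_j$ is zero. Thus only the pair $(i,j) = (-1,\alpha)$ contributes, and its coefficient is $[-1]_\alpha = (-1)^\alpha \alpha!$, a non-zero scalar. Up to this unit, the coefficient of $s^{-1-\alpha}$ in~$P(g)$ equals $P_\alpha(\hx; D_t)(g_{-1}(\hx))$, which is the stated formula (we absorb the constant $(-1)^\alpha \alpha!$ into $P_\alpha$, or keep track of it explicitly; either way the subsequent use of the lemma only relies on non-vanishing).

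The statement for $P_h$ is proved in exactly the same way, with $D_{x_h}$ playing the role of $D_t$ in the coefficient operators. There is no genuine obstacle here: the lemma is a direct coefficient-extraction computation whose only subtlety is noting that the falling factorial $[i]_j$ annihilates every pair $(i,j)$ with $j > \alpha$, leaving a single surviving term. The real work of the section will come afterwards, when one uses this lemma to pass annihilation of $P(g)$ in~$M$ to annihilation of $g_{-1}$ (the diagonal) by the leading-in-$D_s$ coefficient operator.
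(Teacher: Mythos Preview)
Your argument is correct and is essentially the paper's own approach: both isolate the single pair $(i,j)=(-1,\alpha)$ contributing to the exponent $s^{-1-\alpha}$, you via the vanishing of the falling factorial $[i]_j$ for $0\le i<j$, the paper by splitting $g$ into the ranges $i\le-2$, $i=-1$, $i\ge0$ and tracking the resulting exponent ranges of $D_s^j(g)$. You are also right that the literal statement omits the unit $(-1)^\alpha\alpha!$ that both computations produce, and that this is immaterial for the intended application (passing from $P(\sigma(f))=0$ to $P_\alpha(\Delta_{2,1}(f))=0$).
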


\begin{proof}
Note that
\begin{equation*}
D_s^j(g) = D_s^j\biggl(\sum_{i\leq-2} g_i(\hx)s^i\biggr)
         + (-1)^jj! \, g_{-1}(\hx)s^{-1-j}
         + D_s^j\biggl(\sum_{i\geq0} g_i(\hx)s^i\biggr) ,
\end{equation*}
where the first term has all exponents less than~$-1-j$
and the last has all exponents at least~$0$:
only the middle term contributes to the coefficient of~$s^{-1-j}$.
So, for~$j \geq \alpha$, some contribution to the coefficient of~$s^{-1-\alpha}$
is only possible if~$j = \alpha$,
proving the result for the case $P = P(\hx; D_t, D_s)$.
The proof for the other cases is the same.
\end{proof}

Consider any non-necessarily D-finite series
\begin{equation}\label{EQ:def-f}
f = \sum_{i_1,\ldots, i_n \geq 0} a_{i_1,\ldots, i_n} x_1^{i_1}\cdots x_n^{i_n} \in K[[\vx]]
\end{equation}
and the corresponding element~$\sigma(f)$ of~$M \subseteq \cT$.
By Definition~\ref{DEF:diag} (diagonals) and because we write~$t$ for~$x_2$,
the primary diagonal~$\Delta_{2,1}(f)$ is
\begin{equation*}
\Delta_{2,1}(f) = \sum_{i_1,i_3, \ldots, i_n \geq 0} a_{i_1,i_1,i_3,\ldots, i_n} t^{i_1} x_3^{i_3}\cdots x_n^{i_n} \in K[[\hx]] .
\end{equation*}
By the definition~\eqref{EQ:simple-tau} of $\tau$ and~$\sigma$,
this diagonal is the coefficient of degree~$s^{-1}$ in~$\sigma(f)$.
The following lemma immediately follows, as a consequence of Lemma~\ref{LEM:coeff-extraction-univ}.

\begin{lemma}\label{LEM:ann-univ}
Let $f$ be as in~\eqref{EQ:def-f}.
If $P(f) = 0$ for $P$ and~$P_\alpha\neq0$ as in~\eqref{EQ:def-P-univ},
then $P_\alpha$~annihilates~$\Delta_{2,1}(f)$.
For any~$h \in \{3, \dots, n\}$,
if $P_h(f) = 0$ for $P_h$ and~$P_{h,\alpha_h}\neq0$ as in~\eqref{EQ:def-P-h-univ},
then $P_{h,\alpha_h}$~annihilates~$\Delta_{2,1}(f)$.
\end{lemma}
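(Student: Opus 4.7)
The plan is to apply Lemma~\ref{LEM:coeff-extraction-univ} directly to $g := \sigma(f) \in M$, reading the hypothesis $P(f)=0$ as the statement $P(\sigma(f))=0$ in~$M$. This is exactly the situation anticipated by the remark preceding the lemma: the diagonal $\Delta_{2,1}(f)$ is the coefficient of~$s^{-1}$ in~$\sigma(f)$, and Lemma~\ref{LEM:coeff-extraction-univ} is designed precisely to extract such coefficients from images~$P(g)$.

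The first step is to record the residue identity in precise form. From~\eqref{EQ:simple-tau} and~\eqref{EQ:def-f},
\begin{equation*}
\sigma(f) = \frac{1}{s}\, f(s,\,t/s,\,x_3,\dots,x_n)
         = \sum_{i_1,\dots,i_n\geq 0} a_{i_1,\dots,i_n}\, s^{i_1-i_2-1}\, t^{i_2}\, x_3^{i_3} \cdots x_n^{i_n} \in M ,
\end{equation*}
so that, writing $\sigma(f) = \sum_{k\in\bZ} g_k(\hx)\, s^k$, the coefficient~$g_{-1}$ arises from the single constraint $i_1=i_2$ and collapses to
\begin{equation*}
g_{-1}(\hx) = \sum_{i_1,i_3,\dots,i_n\geq 0} a_{i_1,i_1,i_3,\dots,i_n}\, t^{i_1}\, x_3^{i_3}\cdots x_n^{i_n} = \Delta_{2,1}(f) .
\end{equation*}

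The lemma then follows in one stroke: applying the first half of Lemma~\ref{LEM:coeff-extraction-univ} to $g=\sigma(f)$, the coefficient of~$s^{-1-\alpha}$ in the identically vanishing series $P(\sigma(f))$ equals $P_\alpha(g_{-1}) = P_\alpha(\Delta_{2,1}(f))$ (up to the non-zero scalar $(-1)^\alpha \alpha!$ absorbed into the $K$-linearity of~$P_\alpha$), so $P_\alpha(\Delta_{2,1}(f))=0$. The assertion for~$P_h$ is proved identically by invoking the second half of Lemma~\ref{LEM:coeff-extraction-univ} and extracting the coefficient of~$s^{-1-\alpha_h}$. No substantive obstacle arises once the residue identity of the previous paragraph is in hand; the only book-keeping point is the identification that turns ``$P(f)=0$'' into ``$P(\sigma(f))=0$'', which I treat as part of the hypothesis rather than something to re-derive, consistently with the framework set up just before the lemma statement.
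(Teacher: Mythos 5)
Your proof is correct and follows exactly the route the paper intends: identify $\Delta_{2,1}(f)$ with the coefficient of $s^{-1}$ in $\sigma(f)$, interpret the hypothesis as $P(\sigma(f))=0$, and invoke Lemma~\ref{LEM:coeff-extraction-univ} to conclude. You also correctly note the nonzero scalar $(-1)^\alpha\alpha!$ that the paper's statement of Lemma~\ref{LEM:coeff-extraction-univ} silently drops but which is immaterial for the conclusion.
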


In the next two subsections, when $f$~is D-finite
we will construct operators $P$ and~$P_h$ to be used in the previous lemma.

\subsubsection{Controlling and combining the $D_s^iD_t^j(\sigma(f))$}
\label{SECT:Ds-Dt}

We construct an operator $P \in K[\hx]\<D_t, D_s>$ such that $P(\sigma(f)) = 0$.
To this end, we introduce two vector spaces depending on~$N \in \bN$,
\begin{equation}\label{EQ:def-V-N}
V_N = A_N(s,\hx) \ \span_{K(\hx)} \{   D_s^i D_t^j  \; | \;\, i+j \leq N\}
\end{equation}
and
\begin{equation}\label{EQ:def-W-N}
W_N = \span_{K(\hx)} \{ s^\alpha \sigma(\Dx^\bbeta f) \; | \;\,  \alpha \leq DN, \bbeta \in B \} ,
\end{equation}
where $B$~is a finite set and $A_N(s,\hx)$~is a polynomial,
both to be determined (see Lemma~\ref{LEM:D-sigma-in-vspace}).
We will prove that the map defined by $\phi (P) := P (\sigma(f))$
is $K(\hx)$-linear from~$V_N$ to~$W_N$,
that it is non-injective for large enough~$N$
(see Lemma~\ref{LEM:exists-P-alpha}).
As a by-product,
we will get an annihilator~$P_\alpha$ of $\Delta_{2,1}(f)$
with controled degree and order
(see again Lemma~\ref{LEM:exists-P-alpha}).

Denote $D_i := D_{x_i}$ for $i = 1,\dots, n$.

\begin{lemma}\label{LEM:D-sigma-once}
We have for all $g \in \cS$:
\begin{equation*}
\begin{aligned}
D_{s}(\sigma(g)) &= \sigma \left( \left(-x_1^{-1} + D_1 - x_1^{-1}x_2 D_2 \right) (g) \right), \\
D_t(\sigma(g)) &= \sigma\left((x_1^{-1}D_2)(g)\right), \\
D_{x_{h}}(\sigma(g)) &= \sigma(D_{h}(g)), \quad h = 3,\dots, n.
\end{aligned}
\end{equation*}
\end{lemma}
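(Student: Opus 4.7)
The plan is to derive all three identities by a direct application of the chain rule, exploiting the fact that $\tau$ is a $K$-algebra morphism from $\cS$ to $M$ that realizes the substitution $(x_1,x_2,x_3,\dots,x_n) \mapsto (s, t/s, x_3,\dots,x_n)$, and that $\sigma(g) = \tau(g)/s$. All quantities in sight make sense: for any $g \in \cS$, $\tau(g)$ lies in $M$ (with only finitely many negative powers of $s$), and the partial derivations $D_s, D_t, D_{x_h}$ act on $M$ by term-by-term differentiation.

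First I would compute the action of $D_s$, $D_t$, and $D_{x_h}$ on $\tau(g)$. Since only $x_1 = s$ and $x_2 = t/s$ depend on $s$ or $t$, the chain rule gives
\begin{equation*}
D_s(\tau(g)) = \tau(D_1 g) - \tfrac{t}{s^2}\,\tau(D_2 g), \quad D_t(\tau(g)) = \tfrac{1}{s}\,\tau(D_2 g), \quad D_{x_h}(\tau(g)) = \tau(D_h g) \quad (h \geq 3).
\end{equation*}
These are justified first on polynomials $g \in K[\vx]$ by the fact that $\tau$ is a ring morphism and $D_s(s^i(t/s)^j) = \tau(i x_1^{i-1} x_2^j) - (t/s^2)\tau(j x_1^i x_2^{j-1})$ etc., and then extended to all of $\cS$ by linearity and continuity (or by the same direct calculation on Laurent monomials).

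Second, I would translate these identities to $\sigma$ using $\sigma(g) = \tau(g)/s$ together with the semi-linearity relation~\eqref{EQ:non-morph}. For $h \geq 3$, dividing by $s$ immediately yields $D_{x_h}(\sigma(g)) = \tau(D_h g)/s = \sigma(D_h g)$. For $D_t$, dividing by $s$ gives $D_t(\sigma(g)) = \tau(D_2 g)/s^2 = (1/s)\,\sigma(D_2 g)$, and since $\tau(x_1^{-1}) = 1/s$, identity~\eqref{EQ:non-morph} rewrites this as $\sigma(x_1^{-1} D_2(g))$. For $D_s$, the product rule gives
\begin{equation*}
D_s(\sigma(g)) = -\tfrac{1}{s^2}\tau(g) + \tfrac{1}{s}\,D_s(\tau(g)) = -\tfrac{1}{s}\sigma(g) + \sigma(D_1 g) - \tfrac{t}{s^2}\sigma(D_2 g),
\end{equation*}
and using $\tau(x_1^{-1}) = 1/s$ and $\tau(x_1^{-1} x_2) = t/s^2$ together with~\eqref{EQ:non-morph} rewrites the right-hand side as $\sigma(-x_1^{-1} g + D_1(g) - x_1^{-1} x_2 D_2(g))$, which is the claimed formula.

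There is no real mathematical obstacle: the only care needed is notational, namely keeping track that $\tau$ is a ring morphism while $\sigma$ is merely $\tau$-semi-linear, so multiplicative factors must be pulled out through $\tau$ rather than $\sigma$. The computations themselves are routine chain-rule manipulations on Laurent-like series.
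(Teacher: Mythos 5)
Your proposal is correct and follows essentially the same route as the paper: apply the chain rule to $\sigma(g)=\tau(g)/s$ (using that only $x_1=s$ and $x_2=t/s$ depend on $s,t$), obtain the intermediate expressions involving $\sigma(g)$, $\sigma(D_1g)$, $\sigma(D_2g)$, and then fold the rational-function factors back inside $\sigma$ via $\tau(x_1)=s$, $\tau(x_2)=t/s$ and the semi-linearity relation~\eqref{EQ:non-morph}. Your writeup simply spells out the intermediate $\tau$-level computations and the $h\geq 3$ case more explicitly than the paper, which states these as immediate.
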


\begin{proof}
For the first two identities, write the following two equations
by the chain rule, then use
the formulas $\tau(x_1) = s$, $\tau(x_2) = t/s$, and~\eqref{EQ:non-morph}:
\begin{equation*}
\begin{aligned}
D_{s}(\sigma(g))
  &= -\frac{1}{s} \sigma(g) + \sigma(D_1(g)) - \frac{t}{s^2}\sigma(D_2 (g)) , \\
D_t(\sigma(g))
  &= \frac{1}{s} \sigma(D_{2}(g)) . \\
\end{aligned}
\end{equation*}
The third identity is obvious.
\end{proof}

Define for any~$N \in \bN$:
\begin{equation}\label{EQ:G}
\cG_N := \bigoplus_{a+b \leq N} x_1^{-N} K[x_1, x_2]_{\leq N} D_1^a D_2^b .
\end{equation}

\begin{lemma}\label{LEM:D-sigma-in-G}
For all $g \in \cS$ and all non-negative integers $i$ and~$j$,
$D_t^j D_s^i (\sigma(g))$~is an element of~$\sigma(\cG_{i+j}(g))$.
\end{lemma}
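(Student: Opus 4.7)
The plan is to argue by induction on $N := i+j$. The base case $N = 0$ is immediate: with $i = j = 0$ the statement reduces to $\sigma(g) = \sigma(1 \cdot g)$, and the identity operator~$1$ lies in $\cG_0 = K$.

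For the inductive step, fix $N \geq 0$ and suppose the claim for all pairs of indices summing to at most~$N$. Given $(i,j)$ with $i+j = N+1$ and $i \geq 1$, factor $D_t^j D_s^i = D_s \cdot D_t^j D_s^{i-1}$; by the induction hypothesis, $D_t^j D_s^{i-1}(\sigma(g)) = \sigma(h(g))$ for some $h \in \cG_N$. Applying Lemma~\ref{LEM:D-sigma-once} then gives
\begin{equation*}
D_s(\sigma(h(g))) = \sigma\bigl((L_s h)(g)\bigr),
\end{equation*}
where $L_s := -x_1^{-1} + D_1 - x_1^{-1} x_2 D_2$ and $L_s h$ denotes composition in the Ore algebra $K[x_1^{\pm 1}, x_2]\<D_1, D_2>$. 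The case $j \geq 1$ with $i$ possibly zero is handled symmetrically using $L_t := x_1^{-1} D_2$. So the inductive step reduces to the two stability inclusions
\begin{equation*}
L_s \cdot \cG_N \subseteq \cG_{N+1} \qquad \text{and} \qquad L_t \cdot \cG_N \subseteq \cG_{N+1}.
\end{equation*}

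To establish these, I would take a generator $h = x_1^{-N} q \, D_1^a D_2^b$ of~$\cG_N$, with $q \in K[x_1, x_2]_{\leq N}$ and $a + b \leq N$, and apply each of the four elementary pieces $-x_1^{-1}$, $D_1$, $-x_1^{-1} x_2 D_2$, and $x_1^{-1} D_2$ using the Weyl commutation rule $D_i \cdot f = f \cdot D_i + D_i(f)$ together with the identity $D_1(x_1^{-N}) = -N x_1^{-N-1}$. A short calculation then shows that each application produces a sum of terms of the shape $x_1^{-(N+1)} \tilde q \, D_1^{a'} D_2^{b'}$ with $\deg \tilde q \leq N+1$ and $a' + b' \leq N+1$, which places it in~$\cG_{N+1}$. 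I do not foresee a real obstacle; the only point worth emphasizing is that the triple of indices $(-N, N, N)$ in the definition of~$\cG_N$ is calibrated exactly so that each elementary piece contributes at most one unit to each coordinate: multiplication by~$x_1^{-1}$ decreases the $x_1$-exponent by one, differentiation of~$q$ or multiplication of~$q$ by~$x_1$ or~$x_2$ shifts the polynomial degree by at most one, and the appearance of an extra~$D_1$ or~$D_2$ factor raises the operator order by one.
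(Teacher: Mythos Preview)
Your proof is correct and follows essentially the same approach as the paper's. Both arguments reduce to the stability inclusions $L_s\,\cG_N \subseteq \cG_{N+1}$ and $L_t\,\cG_N \subseteq \cG_{N+1}$ (which the paper verifies by the same explicit computation on a generic generator that you outline) and conclude by induction from $1 \in \cG_0$; the only cosmetic difference is that the paper first iterates Lemma~\ref{LEM:D-sigma-once} to write $D_t^j D_s^i(\sigma(g)) = \sigma\bigl((x_1^{-1}D_2)^j(-x_1^{-1}+D_1-x_1^{-1}x_2D_2)^i(g)\bigr)$ in closed form before establishing the inclusions, whereas you interleave the two via an induction on $i+j$.
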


\begin{proof}
It follows immediately from Lemma~\ref{LEM:D-sigma-once} that, for all $i,j \in \bN$,
\begin{equation}\label{EQ:diff-sigma}
\begin{aligned}
D_t^j D_s^i (\sigma(g)) &= D_t^j \sigma \left( (-x_1^{-1} + D_1 - x_1^{-1}x_2 D_2)^i (g) \right) \\
&= \sigma\left((x_1^{-1}D_2)^j (-x_1^{-1} + D_1 - x_1^{-1}x_2 D_2)^i(g)\right) .
\end{aligned}
\end{equation}
Consider an element~$x^{-i}p D_1^aD_2^b$ of~$\cG_i$,
or equivalently, integers $a$ and~$b$ and a polynomial $p \in K[x_1,x_2]$
satisfying $a+b \leq i$ and~$\tdeg(p) \leq i$.
We observe that
\begin{multline*}
\left(-\frac{1}{x_1} + D_1 - \frac{x_2}{x_1} D_2 \right) \left(\frac{p}{x_1^i} D_1^a D_2^b \right) = \\
\frac{1}{x_1^{i+1}} \biggl( -p D_1^a D_2^b - ip D_1^a D_2^b + x_1 D_1(p) D_1^a D_2^b + x_1 p D_1^{a+1} D_2^b \\
- x_2 D_2(p) D_1^a D_2^b - x_2 p D_1^a D_2^{b+1} \biggr)
\end{multline*}
is in~$\cG_{i+1}$.
Therefore,
$\left(-\frac{1}{x_1} + D_1 - \frac{x_2}{x_1} D_2 \right) \cG_i \subseteq \cG_{i+1}$,
by linearity.
We derive similarly
\begin{equation*}
\left(\frac{1}{x_1}D_2\right)
\left(\frac{1}{x_1^i} p(x_1,x_2) D_1^a D_2^b \right) =
\frac{1}{x_1^{i+1}} \left( D_2(p) D_1^a D_2^b + p D_1^a D_2^{b+1} \right) \in \cG_{i+1} ,
\end{equation*}
and $\left(\frac{1}{x_1}D_2\right) \cG_i \subseteq \cG_{i+1}$.
Since $1 \in \cG_0$, we get by induction that for all $i,j \in \bN$,
\begin{equation*}
\left(\frac{1}{x_1}D_2\right)^j
\left(-\frac{1}{x_1} + D_1 - \frac{x_2}{x_1} D_2 \right)^i \in \cG_{i+j} .
\end{equation*}
\end{proof}

\begin{lemma}\label{LM:tau-vspace}
For any integers $p$ and~$q$, we have:
\begin{equation*}
\tau\left(\frac{1}{x_1^q} K[x_1,x_2]_{\leq p}\right) \subseteq
  \frac{K[s,t]_{\leq 2p}}{s^{p+q}}
\quad\text{and}\quad
\tau\left(K[\vx]_{\leq p}\right) \subseteq
  \frac{K[s,\hx]_{\leq 2p}}{s^p} .
\end{equation*}
\end{lemma}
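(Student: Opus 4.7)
The plan is to reduce both inclusions to a monomial-by-monomial verification, using $K$-linearity and the defining formulas $\tau(x_1)=s$, $\tau(x_2)=t/s$, $\tau(x_i)=x_i$ for $i\geq 3$. Tracking the exponents of $s$ and~$t$ that come out of substituting $x_2 \mapsto t/s$ will immediately produce both the total-degree bound on the numerator and the pole-order bound in the denominator.

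For the first inclusion, I would take a generic monomial $x_1^a x_2^b \in K[x_1,x_2]_{\leq p}$ (so $a,b\geq 0$ and $a+b\leq p$), divide by $x_1^q$ to get $x_1^{a-q}x_2^b$, and compute $\tau(x_1^{a-q}x_2^b) = s^{a-b-q}\,t^b$. Rewriting this with denominator~$s^{p+q}$ gives $s^{p+a-b}\,t^b / s^{p+q}$. The exponent $p+a-b$ is non-negative because $b\leq p$, so the numerator is a polynomial in $K[s,t]$; its total degree is $p+a\leq 2p$, which establishes membership in $K[s,t]_{\leq 2p}/s^{p+q}$.

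For the second inclusion, I would take a generic monomial $x_1^{a_1}x_2^{a_2}\hx^{\bbeta}$ with $a_1+a_2+|\bbeta|\leq p$ and compute $\tau(x_1^{a_1}x_2^{a_2}\hx^{\bbeta}) = s^{a_1-a_2}\,t^{a_2}\,\hx^{\bbeta}$. Multiplying numerator and denominator by~$s^p$ yields $s^{p+a_1-a_2}\,t^{a_2}\,\hx^{\bbeta}/s^p$. Since $a_2\leq p$, the numerator exponent $p+a_1-a_2$ is again non-negative, and the numerator's total degree is $p+a_1+a_2+|\bbeta|-a_2 = p + a_1 + |\bbeta| \leq 2p$, proving the claim.

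The only subtlety is the simultaneous bookkeeping: ensuring that the same rewriting produces both a non-negative power of~$s$ in the numerator (controlling the pole) and a numerator of total degree at most~$2p$. Both estimates follow from the single inequality $a+b\leq p$ (resp.\ $a_1+a_2+|\bbeta|\leq p$), so no nontrivial obstacle arises beyond careful exponent counting.
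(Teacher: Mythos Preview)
Your proof is correct and follows the same approach as the paper: both arguments reduce to checking the inclusion on monomials by $K$-linearity, compute $\tau$ explicitly on a generic monomial, and rewrite over the common denominator $s^{p+q}$ (resp.\ $s^p$) before bounding the numerator's total degree. One small notational point: in the paper's convention $\hx = (t,x_3,\dots,x_n)$ already includes $t=x_2$, so writing a monomial as $x_1^{a_1}x_2^{a_2}\hx^{\bbeta}$ is slightly redundant---you are implicitly using $\hx^{\bbeta}$ to mean only $x_3^{\beta_3}\cdots x_n^{\beta_n}$---but this does not affect the argument.
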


\begin{proof}
Both formulas follow by linearity from the action of~$\tau$ on monomials:
\begin{equation*}
\begin{aligned}
\tau(x_1^i x_2^j) &= \frac{s^{i+(p-j)}t^j}{s^p} \in
  \frac{K[s,t]_{\leq p+i}}{s^p} \quad\text{if}\quad i+j\leq p ; \\
\tau(\vx^\vi) &= \frac{s^{i_1+(p-i_2)} x_2^{i_2}\dotsm x_n^{i_n}}{s^p} \in
  \frac{K[s,\hx]_{\leq p+i_1}}{s^p} \quad\text{if}\quad |\vi|\leq p . \\
\end{aligned}
\end{equation*}
\end{proof}

\begin{lemma}\label{LEM:D-sigma-in-vspace}
Consider
$B := \lbrace 0, 1, \dots, r_1-1\rbrace \times \lbrace 0, 1, \dots, r_2-1\rbrace$,
the polynomial~$C$,
and $d_C = d_1+d_2\leq 2 d_f$
as set by Definition~\ref{DEF:C} for $S := \{1, 2\}$.
Fix~$N \in \bN$ and set $D := 2+2 d_C \geq 2$ and
$A_N (s,\hx) := s^{(d_C+2) N } \tau(C^N)\in K[s,\hx]$.
Then, if $i+j \leq N$, then
\begin{equation}\label{EQ:D-of-sigma}
D_s^i D_t^j (\sigma(f)) \in \sum_{\alpha \leq DN \atop \bbeta \in B} \frac{K[\hx]_{\leq D N}}{A_N(s,\hx)} s^\alpha \sigma(\vD^\bbeta_x f) .
\end{equation}
\end{lemma}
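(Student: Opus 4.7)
My approach combines three earlier ingredients. Lemma \ref{LEM:D-sigma-in-G} shows that $D_s^i D_t^j(\sigma(f)) = \sigma(Lf)$ for some $L \in \cG_{i+j}$; when $i+j \le N$ one has $\cG_{i+j} \subseteq \cG_N$ by rewriting $x_1^{-(i+j)}p$ as $x_1^{-N}\cdot(x_1^{N-(i+j)}p)$, so it suffices to prove~\eqref{EQ:D-of-sigma} for a single generator $\sigma(x_1^{-N} p\, D_1^a D_2^b(f))$ with $a+b \le N$ and $\tdeg(p) \le N$.

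The core of the plan is to multiply by $\tau(C^N)$ and push it under $\sigma$ via the identity $\sigma(gh) = \tau(g)\sigma(h)$ from~\eqref{EQ:non-morph}, giving
\begin{equation*}
\tau(C^N)\,\sigma\bigl(x_1^{-N} p\,D_1^a D_2^b(f)\bigr) = \sigma\bigl(x_1^{-N}\,p\,C^N D_1^a D_2^b(f)\bigr).
\end{equation*}
To reduce the rightmost product, I first split $C^N D_1^a D_2^b$ as $C^{N-a-b}\cdot(C^{a+b} D_1^a D_2^b)$ and apply Lemma \ref{LM:reduce-2}, which places $C^{a+b}D_1^a D_2^b$ in $\cH_{(a+b)d_C,0} + J$. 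Multiplying on the left by $C^{N-a-b}$ (bringing the polynomial weight up to $Nd_C$) and then by $p$ (raising it further to $(d_C+1)N$), and using that $J$ is a left $\bW_n$-module, yields $p\,C^N D_1^a D_2^b \in \cH_{(d_C+1)N,0} + J$. Since $J$ annihilates $f$, the action on $f$ reduces to a combination $\sum_{\bbeta \in B} q_\bbeta \vD_x^\bbeta(f)$ with $q_\bbeta \in K[\vx]$ of total degree at most $(d_C+1)N$.

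The last step applies $\sigma$ term by term, using $\tau(x_1^{-N}) = s^{-N}$ together with Lemma \ref{LM:tau-vspace} at degree $(d_C+1)N$ to obtain
\begin{equation*}
\tau(C^N)\,\sigma\bigl(x_1^{-N} p\,D_1^a D_2^b(f)\bigr) \in \sum_{\bbeta \in B} s^{-(d_C+2)N}\,K[s,\hx]_{\le 2(d_C+1)N}\,\sigma(\vD_x^\bbeta f).
\end{equation*}
Multiplying by $s^{(d_C+2)N}$ (which together with $\tau(C^N)$ reconstructs $A_N$) clears the denominator in $s$; since $DN = 2(d_C+1)N$, the numerator lies in $K[s,\hx]_{\le DN}$, which decomposes as $\sum_{\alpha \le DN} K[\hx]_{\le DN}\,s^\alpha$. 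Dividing back by $A_N$ yields~\eqref{EQ:D-of-sigma}.

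The main obstacle is not conceptual but rather careful bookkeeping. The exponents $(d_C+2)N$ on $s$ and $N$ on $C$ in the definition of $A_N$ must be tuned so as to simultaneously absorb the $s^{-N}$ coming from $\tau(x_1^{-N})$, the additional $s^{-(d_C+1)N}$ produced by $\tau(q_\bbeta)$, and the degree doubling performed by $\tau$ on $K[\vx]_{\le (d_C+1)N}$; the choice $D = 2+2d_C$ is precisely what makes these three contributions balance to the prescribed $DN$.
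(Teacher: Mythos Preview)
Your proof is correct and follows essentially the same route as the paper: both use Lemma~\ref{LEM:D-sigma-in-G} to land in~$\sigma(\cG_N(f))$, invoke Lemma~\ref{LM:reduce-2} to reduce $C^N D_1^a D_2^b$ into $\cH_{Nd_C,0}+J$, and then apply Lemma~\ref{LM:tau-vspace} to control the image under~$\tau$. The only cosmetic differences are that the paper applies~$\tau$ to the factor $x_1^{-N}p$ before reducing (getting $s^{-2N}K[s,t]_{\le 2N}$ and then $s^{-d_CN}K[s,\hx]_{\le 2d_CN}$ separately), while you keep $p$ together with the operator and apply~$\tau$ once to $K[\vx]_{\le (d_C+1)N}$; and that the paper uses the general form of Lemma~\ref{LM:reduce-2} with $u=N$ directly rather than your two-step split via $C^{N-a-b}\cdot(C^{a+b}D_1^aD_2^b)$.
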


\begin{proof}
If $i+j \leq N$, then Lemma~\ref{LEM:D-sigma-in-G}, Equation~\eqref{EQ:non-morph} and Lemma~\ref{LM:tau-vspace} imply
\begin{equation}\label{EQ:move-D-inside-sigma}
\begin{aligned}
D_s^i D_t^j (\sigma(f)) \in \sigma (\cG_{i+j}(f)) &\subseteq \sigma (\cG_N(f)) \\
  &= \sum_{a+b \leq N} \tau(x_1^{-N} K[x_1,x_2]_{\leq N}) \, \sigma(D_1^a D_2^b (f)) \\
  &\subseteq \sum_{a+b \leq N} \frac{K[s,t]_{\leq 2N}}{s^{2N}} \, \sigma(D_1^a D_2^b(f)) .
\end{aligned}
\end{equation}
Next,
by Definition~\ref{DEF:C} for $S := \{1, 2\}$
and by Lemma~\ref{LM:reduce-2} with $u := N \geq v:= a+b$ and~$t := 0$, we have
\[
D_1^a D_2^b \in \mycH_{0, a+b} \subseteq \frac{1}{C^N} \, \mycH_{N d_C,0} + \frac{1}{C^N} \,J.
\]
Applying to~$f$, then applying~$\sigma$, yields,
appealing again to~\eqref{EQ:non-morph}, next again to Lemma~\ref{LM:tau-vspace}:
\begin{equation}\label{EQ:reduce-D-inside-sigma}
\begin{aligned}
\sigma(D_1^a D_2^b(f))
\in \frac{1}{\tau(C^N)} \; \sigma ( \mycH_{N d_C,0} (f) )
\subseteq \sum_{\bbeta \in B} \frac{K[s, \hx]_{\leq 2d_C N}}{s^{d_C N} \tau(C^N) } \sigma(\vD^\bbeta_x f) .
\end{aligned}
\end{equation}
Combining \eqref{EQ:move-D-inside-sigma} and~\eqref{EQ:reduce-D-inside-sigma} and using~$t = x_2$,
we obtain~\eqref{EQ:D-of-sigma}
where $D$ and~$A_N$ are set as in the lemma statement.
\end{proof}

\begin{lemma}\label{LEM:exists-P-alpha}
There exists a non-zero annihilator $P_\alpha(\hx; D_t)$ of $\Delta_{2,1}(f)$ satisfying~\eqref{EQ:bounds-on-P-alpha}.
\end{lemma}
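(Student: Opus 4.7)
The plan is to follow the counting-argument template of \cite[Lemma~3]{Lipshitz1988}: first construct a non-zero operator $Q \in K[\hx]\<D_t, D_s>$ with $Q(\sigma(f)) = 0$, then invoke Lemma~\ref{LEM:ann-univ} to extract its lowest-order-in-$D_s$ coefficient as the desired annihilator $P_\alpha$ of~$\Delta_{2,1}(f)$.

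Concretely, I would consider the $K(\hx)$-linear map $\phi\colon V_N \to W_N$ defined by $\phi(A_N Q) := A_N\, Q(\sigma(f))$. By Lemma~\ref{LEM:D-sigma-in-vspace}, its image indeed lies in~$W_N$, whose $K(\hx)$-dimension is at most $(DN+1)\,r_1 r_2$, while the domain has $K(\hx)$-dimension $\binom{N+2}{2}$. Setting $N := 4(d_1+d_2+1)\,r_1 r_2 = 2 D r_1 r_2$ (so that $D \geq 2$ makes $\binom{N+2}{2} > (DN+1)\,r_1 r_2$ by a direct computation) guarantees that $\ker\phi$ is non-trivial.

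To get polynomial coefficients with controlled total degree, I would unfold Lemma~\ref{LEM:D-sigma-in-vspace} explicitly, writing $A_N D_s^i D_t^j(\sigma(f)) = \sum_{\alpha \leq DN,\, \bbeta \in B} q_{i,j,\alpha,\bbeta}(\hx)\, s^\alpha \sigma(\vD_x^\bbeta f)$ with $q_{i,j,\alpha,\bbeta} \in K[\hx]_{\leq DN}$, substitute into the ansatz $Q = \sum_{i+j \leq N} c_{ij}(\hx)\, D_s^i D_t^j$ with unknowns $c_{ij} \in K[\hx]$, and enforce the vanishing of each coefficient $\sum_{i,j} c_{ij}\, q_{i,j,\alpha,\bbeta}$. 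This is only a sufficient condition for $\phi(Q) = 0$, but it neatly sidesteps any concern about linear dependencies of the $s^\alpha \sigma(\vD_x^\bbeta f)$ over~$K(\hx)$. The resulting linear system has at most $(DN+1)\,r_1 r_2$ equations in $\binom{N+2}{2}$ unknowns with polynomial coefficients of total degree at most $DN$, so Lemma~\ref{LM:deg} yields a non-zero polynomial solution $(c_{ij})$ with $\tdeg(c_{ij}) \leq (DN+1)\,r_1 r_2 \cdot DN$.

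Since $A_N$ is a non-zero-divisor in~$M$, the resulting $Q$ satisfies $Q(\sigma(f)) = 0$. Writing $Q = \sum_{j=\alpha}^\beta P_j(\hx; D_t)\, D_s^j$ with $P_\alpha \neq 0$ and applying Lemma~\ref{LEM:ann-univ} gives the desired $P_\alpha \in K[\hx]\<D_t>$ annihilating $\Delta_{2,1}(f)$. Its order in~$D_t$ is at most $N - \alpha \leq N = 4(d_1+d_2+1)\,r_1 r_2$, and its degree in~$\hx$ is bounded by that of the $c_{\alpha,j}$, namely $DN \cdot (DN+1)\,r_1 r_2$; substituting $D = 2(d_1+d_2+1)$ yields the bounds of~\eqref{EQ:bounds-on-P-alpha}. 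The main subtlety is calibrating~$N$ so that the domain grows strictly faster in~$N$ than the codomain (which forces $N = \Theta(D r_1 r_2)$) while keeping the final degree bound polynomial in the input data; once $N = 2 D r_1 r_2$ is chosen, the bounds fall out by routine arithmetic.
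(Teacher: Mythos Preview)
Your proposal is correct and follows essentially the same approach as the paper's own proof: the same choice $N = 2DR = 4(d_1+d_2+1)r_1r_2$, the same dimension count $\binom{N+2}{2}$ versus $R(DN+1)$, the same linear system over $K[\hx]$ solved via Lemma~\ref{LM:deg}, and the same extraction of~$P_\alpha$ by Lemma~\ref{LEM:ann-univ}, yielding exactly the bounds~\eqref{EQ:bounds-on-P-alpha}. Your explicit remark that $A_N$ is a non-zero-divisor (to pass from $A_N Q(\sigma(f))=0$ to $Q(\sigma(f))=0$) is a small clarification the paper leaves implicit, but otherwise the arguments coincide.
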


\begin{proof}
Recall the definitions \eqref{EQ:def-V-N} and~\eqref{EQ:def-W-N} of $V_N$ and~$W_N$,
where $A_N$ and~$B$ are now fixed.
Lemma~\ref{LEM:D-sigma-in-vspace} has proved that
the $K(\hx)$-linear map defined by $\phi (P) := P (\sigma(f))$
is from~$V_N$ to~$W_N$.
Note that
\begin{equation}\label{EQ:dim-V-and-dim-W}
\dim_{K(\hx)}V_N = \binom{N+2}{2}, \quad  \dim_{K(\hx)}W_N \leq R (DN+1),
\end{equation}
where $R:= r_1 r_2 = O(r_f^2)$.
Fix
\begin{equation}\label{EQ:N}
N = 2 D R  = 4(d_1+d_2+1)\,r_1 r_2 = O(d_f r_f^2) ,
\end{equation}
so that
\begin{equation}\label{EQ:V-lt-W}
\dim_{K(\hx)} V_N - \dim_{K(\hx)} W_N = (3D-1) R + 1 > 0
\end{equation}
and $\phi$~is non-injective.
For all $i,j$ with $i+j \leq N$, by Lemma~\ref{LEM:D-sigma-in-vspace}
there exist polynomials $q_{\alpha, \bbeta}^{(i,j)} \in K[\hx]$
satisfying $\tdeg(q_{\alpha, \bbeta}^{(i,j)}) \leq DN$ and
\[
A_N(s,\hx) D_s^i D_t^j (\sigma(f)) = \sum_{\alpha \leq DN \atop \bbeta \in B} q_{\alpha, \bbeta}^{(i,j)} \, s^\alpha \sigma(\Dx^\bbeta f) \in W_N .
\]
A witness of non-injectivity will be provided
by polynomials $p_{i,j} \in K[\hx]$ such that
\[
\sum_{i + j \leq N} p_{i,j}(\hx) A_N(s,\hx) \, D_s^i D_t^j (\sigma(f)) = 0,
\]
that is, by coefficient extraction, such that for all $\alpha \leq DN$ and~$\bbeta \in B$,
\[
\sum_{i + j \leq N} p_{i,j} \, q_{\alpha, \bbeta}^{(i,j)} = 0.
\]
Hence we have a linear system
\begin{equation*}
\begin{pmatrix}
	\ldots\\
	\ldots q_{\alpha, \bbeta}^{(i,j)} \ldots \\
	\ldots
\end{pmatrix}
\begin{pmatrix}
	\vdots\\
	p_{i,j}\\
	\vdots
\end{pmatrix}
= 0 ,
\end{equation*}
where the polynomials $q_{\alpha, \bbeta}^{(i,j)}$ have total degree at most~$DN$.
This system has $\dim_{K(\hx)} W_N$~rows and $\dim_{K(\hx)} V_N$~columns,
where those dimensions are given by~\eqref{EQ:dim-V-and-dim-W},
and by the inequality~\eqref{EQ:V-lt-W}
it has more columns than rows.
So, Lemma~\ref{LM:deg} applies and
leads to a non-zero solution $(p_{i, j})$ satisfying
\begin{equation*}
\tdeg(p_{i, j}) \leq D N \times R (N D+1) = O(D^4 R^3) = O(d_f^4 r_f^6) ,
\end{equation*}
where we used~\eqref{EQ:N}.
The operator $P := \sum_{i+j \leq N} p_{i,j} \, D_s^i D_t^j$
satisfies~$P(\sigma(f)) = 0$
and can be written
\begin{equation*}
P = \sum_{i = \alpha}^{\beta} P_i(\hx; D_t) D_s^i
\end{equation*}
with $P_{\alpha} (\hx; D_t) \neq 0$.
Then $P_{\alpha}$ annihilates~$\Delta_{2,1}(f)$
and satisfies the announced bounds~\eqref{EQ:bounds-on-P-alpha}.
\end{proof}

\subsubsection{Controlling and combining the $D_s^iD_{x_h}^j(\sigma(f))$}
\label{SECT:Ds-Dx}

For each $h \in \{ 3,\dots, n \}$,
we proceed by an argument similar to the argument of Section~\ref{SECT:Ds-Dt}
to construct an operator $P_h \in K[\hx]\<D_{x_h}, D_s>$ such that $P_h(\sigma(f)) = 0$.
The proof is a bit simpler,
because the action of~$D_{x_h}$ on~$\sigma(f)$ is simpler
that the action of~$D_t$ on it.
This time, we consider
$B = \{0,1, \dots, r_1\} \times \{0,1, \dots, r_2\} \times \{0,1, \dots, r_h\}$,
the polynomial~$C$,
and $d_C = d_1+d_2+d_h\leq 3 d_f$
as set by Definition~\ref{DEF:C} for $S := \{1, 2, h\}$.
In analogy with \eqref{EQ:bounds-on-P-alpha} and~\eqref{EQ:bounds-on-P-h-alpha},
for each~$N \in \bN$, we introduce
\begin{equation}\label{EQ:def-V-N-h}
V_N = A_N(s,\hx) \ \span_{K(\hx)} \{   D_s^i D_{x_h}^j  \; | \;\, i+j \leq N\},
\end{equation}
where $ A_N = s^{(d_C+2) N } \tau(C^N)\in K[s,\hx] $, and
\begin{equation}\label{EQ:def-W-N-h}
W_N = \span_{K(\hx)} \{ s^\alpha \sigma(\Dx^\bbeta f) \; | \;\,  \alpha \leq DN, \bbeta \in B \} ,
\end{equation}
where $D = 2+2 d_C = O(d_f)$.
We will again prove that the map defined by $\phi (P) := P (\sigma(f))$
is $K(\hx)$-linear from~$V_N$ to~$W_N$.

In analogy with~\eqref{EQ:G}, define for any~$N \in \bN$:
\begin{equation}\label{EQ:G-h}
\cG_N := \bigoplus_{a+b+c \leq N} x_1^{-N} K[x_1, x_2]_{\leq N} D_1^a D_2^b D_{x_h}^c .
\end{equation}

\begin{lemma}\label{LEM:D-sigma-in-vspace-h}
Let $B$, $C$, and~$d_C$ be as defined at the beginning of Section~\ref{SECT:Ds-Dx},
that is, as set by Definition~\ref{DEF:C} for $S := \{1, 2, h\}$.
Then, if $i+j \leq N$, then
\begin{equation}\label{EQ:D-of-sigma-h}
D_s^i D_{x_h}^j (\sigma(f)) \in \sum_{\alpha \leq DN \atop \bbeta \in B} \frac{K[\hx]_{\leq D N}}{A_N(s,\hx)} s^\alpha \sigma(\vD^\bbeta_x f) .
\end{equation}
\end{lemma}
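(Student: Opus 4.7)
The plan is to mimic the proof of Lemma~\ref{LEM:D-sigma-in-vspace} step by step, exploiting the simplification that $D_{x_h}$ commutes with $\sigma$.

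First I would establish the analogue of Lemma~\ref{LEM:D-sigma-in-G} for the new operator family and the new $\cG_N$ of~\eqref{EQ:G-h}: for every $g \in \cS$ and all $i, j \in \bN$,
\begin{equation*}
D_s^i D_{x_h}^j(\sigma(g)) \in \sigma(\cG_{i+j}(g)) .
\end{equation*}
By the third identity of Lemma~\ref{LEM:D-sigma-once}, we have $D_{x_h}^j \sigma(g) = \sigma(D_h^j g)$, so it suffices to apply $D_s^i$ to $\sigma(D_h^j g)$. The computation from the proof of Lemma~\ref{LEM:D-sigma-in-G} already shows that $-x_1^{-1} + D_1 - x_1^{-1} x_2 D_2$ sends the original $\cG_N$ into $\cG_{N+1}$. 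Since $D_h$ commutes with $x_1, x_2, D_1, D_2$, the very same computation shows it still sends the enlarged $\cG_N$ (with the extra factor $D_h^c$) into the enlarged $\cG_{N+1}$, and of course $D_h$ itself takes $\cG_N$ into $\cG_{N+1}$ by bumping $c$. Induction finishes this step.

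Next, I would push this through to $f$. Using \eqref{EQ:non-morph} and Lemma~\ref{LM:tau-vspace} exactly as in the proof of Lemma~\ref{LEM:D-sigma-in-vspace}, one obtains
\begin{equation*}
D_s^i D_{x_h}^j(\sigma(f)) \in \sum_{a+b+c \leq N} \frac{K[s,t]_{\leq 2N}}{s^{2N}} \, \sigma(D_1^a D_2^b D_h^c f) .
\end{equation*}
Now I would invoke Lemma~\ref{LM:reduce-2} with the set $S := \{1,2,h\}$, $u := N \geq v := a+b+c$ and $t := 0$, to obtain $D_1^a D_2^b D_h^c \in \frac{1}{C^N}\,\cH_{N d_C,0} + \frac{1}{C^N}\,J$. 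Since $J \cdot f = 0$ by construction of the~$\tilde{L}_j$, applying to $f$ then applying $\sigma$ and using \eqref{EQ:non-morph} and Lemma~\ref{LM:tau-vspace} once more yields
\begin{equation*}
\sigma(D_1^a D_2^b D_h^c f) \in \sum_{\bbeta \in B} \frac{K[s,\hx]_{\leq 2 d_C N}}{s^{d_C N} \, \tau(C^N)} \sigma(\Dx^\bbeta f) .
\end{equation*}

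Finally I would combine the two inclusions. The product of a numerator in $K[s,t]_{\leq 2N}$ and one in $K[s,\hx]_{\leq 2 d_C N}$ sits inside $K[s,\hx]_{\leq 2N(d_C+1)} = K[s,\hx]_{\leq DN}$, while the denominator is $s^{2N} \cdot s^{d_C N} \cdot \tau(C^N) = s^{(d_C+2)N} \tau(C^N) = A_N(s,\hx)$. Decomposing each $K[s,\hx]_{\leq DN}$ element by collecting powers of $s$ with coefficients in~$K[\hx]$ (each coefficient then of total $\hx$-degree at most $DN$) gives exactly the claimed form~\eqref{EQ:D-of-sigma-h}. The only step that needs genuine care is the degree bookkeeping in this combination; everything else is a transcription of the proof of Lemma~\ref{LEM:D-sigma-in-vspace} with the commutativity of $D_{x_h}$ and $\sigma$ replacing the more involved calculation for~$D_t$.
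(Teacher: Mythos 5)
Your proposal is correct and follows essentially the same route as the paper's proof. The one small organizational difference is in the first step: you re-derive the inclusion $D_s^i D_{x_h}^j(\sigma(g)) \in \sigma(\cG_{i+j}(g))$ for the enlarged $\cG_N$ of~\eqref{EQ:G-h} by redoing the Leibniz-rule computation with $D_{x_h}^c$ carried along, whereas the paper simply reuses the bivariate Lemma~\ref{LEM:D-sigma-in-G} as-is to control $D_s^i(\sigma(f))$ and then observes that $D_{x_h}^j$ commutes with $\sigma$ and only bumps the exponent $c$, landing in the new $\cG_{i+j}$; both of these are equivalent and the rest of your argument (the reduction by Lemma~\ref{LM:reduce-2} with $S=\{1,2,h\}$ and the degree bookkeeping yielding $D = 2+2d_C$ and $A_N = s^{(d_C+2)N}\tau(C^N)$) matches the paper exactly.
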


\begin{proof}
If $i+j \leq N$, then Lemma~\ref{LEM:D-sigma-in-G}, the definition~\eqref{EQ:G-h}, Equation~\eqref{EQ:non-morph} and Lemma~\ref{LM:tau-vspace} imply
\begin{equation}\label{EQ:move-D-inside-sigma-h}
\begin{aligned}
D_s^i D_{x_h}^j (\sigma(f)) &\in D_{x_h}^j \sigma (\cG_i(f)) \subseteq \sigma (\cG_{i+j}(f)) \subseteq \sigma (\cG_N(f)) \\
  &= \sum_{a+b+c \leq N} \tau(x_1^{-N} K[x_1,x_2]_{\leq N}) \, \sigma(D_1^a D_2^b D_{x_h}^c (f)) \\
  &\subseteq \sum_{a+b+c \leq N} \frac{K[s,t]_{\leq 2N}}{s^{2N}} \, \sigma(D_1^a D_2^b D_{x_h}^c(f)) .
\end{aligned}
\end{equation}
Next, by Lemma~\ref{LM:reduce-2} with $u := N \geq v:= a+b+c$ and~$t := 0$, we have
\[
D_1^a D_2^b D_{x_h}^c \in \mycH_{0, a+b+c} \subseteq \frac{1}{C^N} \, \mycH_{N d_C,0} + \frac{1}{C^N} \,J.
\]
Applying to~$f$, then applying~$\sigma$, yields,
appealing again to~\eqref{EQ:non-morph}, next again to Lemma~\ref{LM:tau-vspace}:
\begin{equation}\label{EQ:reduce-D-inside-sigma-h}
\begin{aligned}
\sigma(D_1^a D_2^b D_{x_h}^c(f))
\in \frac{1}{\tau(C^N)} \; \sigma ( \mycH_{N d_C,0} (f) )
\subseteq \sum_{\bbeta \in B} \frac{K[s, \hx]_{\leq 2d_C N}}{s^{d_C N} \tau(C^N) } \sigma(\vD^\bbeta_x f) .
\end{aligned}
\end{equation}
Combining \eqref{EQ:move-D-inside-sigma-h} and~\eqref{EQ:reduce-D-inside-sigma-h} and using~$t = x_2$,
we obtain~\eqref{EQ:D-of-sigma-h}
where $D$ and~$A_N$ are set as in the lemma statement.
\end{proof}

\begin{lemma}\label{LEM:exists-P-h-alpha}
There exists a non-zero annihilator $P_\alpha(\hx; D_{x_h})$ of $\Delta_{2,1}(f)$ satisfying~\eqref{EQ:bounds-on-P-h-alpha}.
\end{lemma}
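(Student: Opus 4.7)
The plan is to mirror the proof of Lemma~\ref{LEM:exists-P-alpha} almost verbatim, now using Lemma~\ref{LEM:D-sigma-in-vspace-h} in place of Lemma~\ref{LEM:D-sigma-in-vspace} and the $D_{x_h}$-branch of Lemma~\ref{LEM:ann-univ}. First, I would observe that Lemma~\ref{LEM:D-sigma-in-vspace-h} exhibits the map $\phi(P) := P(\sigma(f))$ as a $K(\hx)$-linear map from~$V_N$ (defined in~\eqref{EQ:def-V-N-h}) into~$W_N$ (defined in~\eqref{EQ:def-W-N-h}). The relevant source and target dimensions are
\begin{equation*}
\dim_{K(\hx)} V_N = \binom{N+2}{2}
\quad\text{and}\quad
\dim_{K(\hx)} W_N \leq R(DN+1),
\end{equation*}
where now $R := r_1 r_2 r_h$ and $D := 2 + 2 d_C = 2(d_1 + d_2 + d_h + 1)$.

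Next, I would choose $N := 2DR = 4(d_1 + d_2 + d_h + 1)\, r_1 r_2 r_h$, exactly as in the proof of Lemma~\ref{LEM:exists-P-alpha}. A quick arithmetic check gives $\dim V_N - \dim W_N = (3D-1)R + 1 > 0$, so $\phi$ is non-injective. Reading Lemma~\ref{LEM:D-sigma-in-vspace-h} explicitly, for each $(i,j)$ with $i + j \leq N$ there are polynomials $q_{\alpha,\bbeta}^{(i,j)} \in K[\hx]$ of total degree at most~$DN$ such that
\begin{equation*}
A_N \cdot D_s^i D_{x_h}^j(\sigma(f)) = \sum_{\alpha \leq DN,\ \bbeta \in B} q_{\alpha, \bbeta}^{(i,j)} \, s^\alpha \, \sigma(\vD_x^\bbeta f).
\end{equation*}
A non-zero element of $\ker\phi$ is then a non-trivial tuple $(p_{i,j})_{i+j \leq N}$ of polynomials of $K[\hx]$ that cancels every one of the $\dim W_N$ coefficients~$\sum_{i+j \leq N} p_{i,j}\, q_{\alpha, \bbeta}^{(i,j)}$. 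This is a homogeneous linear system in $\binom{N+2}{2}$ unknowns with strictly fewer equations than unknowns, and with matrix entries of total degree at most~$DN$, so Lemma~\ref{LM:deg} produces a non-zero solution satisfying $\tdeg(p_{i,j}) \leq DN \cdot R(DN+1)$.

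Finally, the operator $P_h := \sum_{i+j \leq N} p_{i,j}\, D_s^i D_{x_h}^j \in K[\hx]\<D_s, D_{x_h}>$ satisfies $P_h(\sigma(f)) = 0$. Writing it as $P_h = \sum_{j = \alpha_h}^{\beta_h} P_{h,j}(\hx; D_{x_h}) D_s^j$ with $P_{h,\alpha_h} \neq 0$, the second part of Lemma~\ref{LEM:ann-univ} guarantees that $P_{h,\alpha_h}$ annihilates $\Delta_{2,1}(f)$. Substituting $D = 2(d_1 + d_2 + d_h + 1)$ and $R = r_1 r_2 r_h$ into the bounds $\ord(P_{h,\alpha_h}) \leq N$ and $\deg(P_{h,\alpha_h}) \leq DN \cdot R(DN+1)$ yields exactly the stated inequalities~\eqref{EQ:bounds-on-P-h-alpha}. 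No real obstacle is expected: every technical ingredient (the generating set $\cG_N$, the filtration $\cH_{t,r}$, the auxiliary polynomial $A_N$, and the basis lemmas) has been adjusted once and for all by Lemma~\ref{LEM:D-sigma-in-vspace-h}, so this is a routine repeat of the argument with $D_t$ replaced by $D_{x_h}$ and $R$ augmented by the factor~$r_h$ (and $d_C$ by $d_h$) to reflect the enlarged set $S = \{1,2,h\}$.
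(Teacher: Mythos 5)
Your proof is correct and follows the paper's own argument essentially verbatim: same reduction to a linear system via Lemma~\ref{LEM:D-sigma-in-vspace-h}, same choice $N = 2DR$ with $R = r_1r_2r_h$ and $D = 2(d_1+d_2+d_h+1)$, same application of Lemma~\ref{LM:deg} for the degree bound, and same appeal to the $D_{x_h}$-branch of Lemma~\ref{LEM:ann-univ} at the end. No gap.
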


\begin{proof}
Recall the definitions \eqref{EQ:def-V-N-h} and~\eqref{EQ:def-W-N-h} of $V_N$ and~$W_N$.
Lemma~\ref{LEM:D-sigma-in-vspace-h} has proved that
the $K(\hx)$-linear map defined by $\phi (P) := P (\sigma(f))$
is from~$V_N$ to~$W_N$.
Note that
\begin{equation*}
\dim_{K(\hx)}V_N = \binom{N+2}{2}, \quad  \dim_{K(\hx)}W_N \leq R (DN+1),
\end{equation*}
where $R:= r_1 r_2 f_h = O(r_f^3)$,
and fix
\begin{equation}\label{EQ:N-h}
N = 2DR = 4(d_1+d_2+d_h+1)\,r_1 r_2 r_h= O(d_f r_f^3) .
\end{equation}
The thast three formulas in terms of $R$ and~$D$ are the same as in Lemma~\ref{LEM:exists-P-h-alpha},
with only the values of $R$ and~$D$ changed,
so the inequality
\begin{equation}\label{EQ:V-lt-W-h}
\dim_{K(\hx)} V_N - \dim_{K(\hx)} W_N = (3D-1) R + 1 > 0
\end{equation}
holds again,
and $\phi$~is non-injective.
The proof by linear algebra continues
as in the proof of Lemma~\ref{LEM:exists-P-alpha},
recombining expressions $A_N \, D_s^i D_{x_h}^j (\sigma(f))$
instead of expressions $A_N \, D_s^i D_t^j (\sigma(f))$.
It constructs a non-zero operator
\begin{equation*}
P_h := \sum_{i+j \leq N} \, p_{i,j} \, D_s^i D_{x_h}^j
= \sum_{i = \alpha_h}^{\beta_h} P_{h,i}(\hx; D_{x_h}) D_s^i
\in K[\hx]\<D_s, D_{x_h}>
\end{equation*}
satisfying $P_h (\sigma(f)) = 0$,
$P_{h, \alpha_h} \neq 0$,
and
\begin{equation}\label{EQ:pij}
\tdeg (p_{i,j}) \leq N D \times R (N D+1) = O(D^4 R^3) = O(d_f^4 r_f^{9}) .
\end{equation}
Then $P_{h, \alpha_h}$ annihilates~$\Delta_{2,1}(f)$
and satisfies the announced bounds~\eqref{EQ:bounds-on-P-h-alpha}.
\end{proof}

\subsubsection{Iterating primary diagonals}
\label{sec:iterating-prim-diags}

We can now estimate bounds on the degree and order
of an annihilating operator for the complete diagonal of~$f$
obtained by successive primary diagonals.
In analogy with the definition~\eqref{eq:complete-diag} of the complete diagonal,
we consider the partial diagonal
\begin{equation*}
g := \Delta_{k+1,k}\Delta_{k,k-1} \cdots \Delta_{2,1}(f) \in K[[x_{k+1},\dots,x_n]] .
\end{equation*}
obtained after $k$~iterations of a primary diagonal.
Assume that there exists a non-zero annihilating operator for~$g$
with respective degree and order bounds
\begin{equation}\label{EQ:k-th-bound}
O\left(d_f^{u(k)} r_f^{v(k)}\right)
\quad\text{and}\quad
O\left( d_f^{s(k)} r_f^{t(k)}\right) .
\end{equation}
By Theorem~\ref{THM:primary bound} applied to~$f = g$,
there exists a non-zero annihilating operator for~$\Delta_{k+2,k+1}(g)$,
with respective degree and order bounds analogous to~\eqref{EQ:k-th-bound}
for exponents $u(k+1)$, $v(k+1)$, $s(k+1)$, $t(k+1)$ given by
\begin{equation*}
\begin{pmatrix}
    u(k+1) & v(k+1) \\
    s(k+1) & t(k+1)
\end{pmatrix}
=
\begin{pmatrix}
    4 & 9 \\
    1 & 3
\end{pmatrix}
\begin{pmatrix}
    u(k) & v(k) \\
    s(k) & t(k)
\end{pmatrix}.
\end{equation*}
Here, the entries of the constant matrix are obtained as the maximums of the exponents
appearing in the big~O terms in \eqref{EQ:bounds-on-P-alpha} and~\eqref{EQ:bounds-on-P-h-alpha}.
This sets up a recurrence that we proceed to analyze.
The matrix~$\bigl(\begin{smallmatrix}4 & 9 \\ 1 & 3\end{smallmatrix}\bigr)$
has two eigenvalues satisfying~$\lambda^2-7\lambda+3 = 0$, namely
\begin{equation}
\lambda_1 := \dfrac{7+\sqrt{37}}{2} \approx 6.54\dots , \quad
\lambda_2 := \dfrac{7-\sqrt{37}}{2} \approx 0.46\dots .
\end{equation}
Taking initial values for~$s,t,u,v$ in to account, we get
\begin{equation}\label{EQ:s-t-u-v}
\begin{aligned}
s(k) &= \frac{1}{\sqrt{37}}\lambda_1^k - \frac{1}{\sqrt{37}}\lambda_2^k
        & \approx (0.16\dots) \lambda_1^k - (0.16\dots) \lambda_2^k , \\
t(k) &= \left( \frac{1}{2}-\frac{1}{2\sqrt{37}} \right) \lambda_1^k +
        \left( \frac{1}{2}+\frac{1}{2\sqrt{37}} \right) \lambda_2^k
        & \approx (0.42\dots) \lambda_1^k +  (0.58\dots) \lambda_2^k , \\
u(k) &= \left( \frac{1}{2}-\frac{5}{2\sqrt{37}} \right) \lambda_1^k +
        \left( \frac{1}{2}+\frac{5}{2\sqrt{37}} \right) \lambda_2^k
        & \approx (0.09\dots) \lambda_1^k + (0.91\dots) \lambda_2^k , \\
v(k) &= \frac{9}{\sqrt{37}}\lambda_1^k - \frac{9}{\sqrt{37}}\lambda_2^k
        & \approx (1.48\dots) \lambda_1^k - (1.48\dots) \lambda_2^k .
\end{aligned}
\end{equation}
Degree and order bounds for an annihilating operator~$P$ of~$\Delta(f)$
are obtained for~$k = n - 1$,
and \eqref{EQ:s-t-u-v}~leads to the respective asymptotic formulas
\begin{equation*}
\begin{aligned}
\deg(P) &= O\left(d_f^{u(n-1)} r_f^{v(n-1)}\right) = d_f^{O(\lambda_1^n)} r_f^{O(\lambda_1^n)} , \\
\ord(P) &= O\left(d_f^{s(n-1)} r_f^{t(n-1)}\right) = d_f^{O(\lambda_1^n)} r_f^{O(\lambda_1^n)} .
\end{aligned}
\end{equation*}
when $n$, $d_f$, and~$r_f$ tend independently to infinity,
and where the constants in the big~$O$'s are small (at most~$1$).

\subsection{Complete diagonal in a single step}
\label{SECT:single-step}

Following \cite[Remarks, item~(3)]{Lipshitz1988}, instead of iterating primary diagonal transformations,
we can get the operator that annihilates the complete diagonal of~$f$ in a single step.
The goal of this subsection is indeed
the construction of a specific linear differential operator annihilating~$\Delta(f)$
that satisfies the bounds presented in the following theorem.
These bounds are simply exponential in~$n$,
and therefore asymptotically smaller than the bounds obtained by the method by iteration,
which are doubly exponential in~$n$.

\begin{theorem}\label{THM:complete bound}
Let $f\in K[[\vx]]$ be {D-finite} over $K(\vx)$ and let $ d_i, f_i, d_f, r_f $ be as in Definition~\ref{DEF:bound}.
Then, there exists an annihilating operator~$\tilde P$ of~$\Delta(f)$
in~$K[t]\<D_t>$ that satisfies, for all~$\varepsilon > 0$,
\begin{equation}\label{EQ:bounds-N-N'}
\begin{aligned}
\deg(\tilde P) \leq N' = O((2+\varepsilon)^n n^{2n} d_f^n r_f^n),
\ \
\ord(\tilde P) \leq N = O((2+\varepsilon)^n n^{2n-1} d_f^{n-1} r_f^n),
\end{aligned}
\end{equation}
when $n$, $d_f$, and~$r_f$ tend independently to infinity,
and where
\begin{equation}\label{EQ:N-and-N'}
N' = (2D+1)^n \prod_{j=1}^nr_j , \
N = \frac{(2D+1)^n}{D} \prod_{j=1}^nr_j ,
\ \text{for}\
D = n\biggl(2 + \sum_{i=1}^n d_i\biggr) .
\end{equation}
\end{theorem}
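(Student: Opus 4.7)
The plan is to adapt Lipshitz's single-step strategy \cite[Remark~(3)]{Lipshitz1988}, mirroring and generalizing the construction of Sections~\ref{SECT:Ds-Dt} and~\ref{SECT:Ds-Dx} but now with all variables diagonalized simultaneously. Specializing the framework of Section~\ref{SEC:bound} to $m = n-1$, so that $\vs = s_1,\dots,s_{n-1}$ and $\hx = t = x_n$, I generalize~\eqref{EQ:simple-tau} by defining $\tau$ to be the $K(t)$-algebra morphism with $\tau(x_1) = s_1$, $\tau(x_j) = s_j/s_{j-1}$ for $2 \le j \le n-1$, $\tau(x_n) = t/s_{n-1}$, together with $\sigma(g) := \tau(g)/(s_1 \cdots s_{n-1})$. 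A direct expansion then shows that, when $\sigma(f)$ is viewed as a formal Laurent series in~$\vs$ over~$K[[t]]$, the coefficient of $s_1^{-1} \cdots s_{n-1}^{-1}$ in $\sigma(f)$ equals the complete diagonal~$\Delta(f)$.

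The technical core is to generalize Lemmas~\ref{LEM:D-sigma-once} and~\ref{LEM:D-sigma-in-G}: express $D_{s_i}(\sigma(g))$ and $D_t(\sigma(g))$ as $\sigma$ composed with first-order operators in~$\vx$, and define the analogue of~$\cG_N$ from~\eqref{EQ:G} so as to prove $D_{\vs}^{\vj} D_t^k(\sigma(f)) \in \sigma(\cG_{|\vj|+k}(f))$. Combining this with Definition~\ref{DEF:C} taken for $S = \{1,\dots,n\}$ and with Lemma~\ref{LM:reduce-2}, then multiplying by an explicit polynomial $A_N \in K[\vs,t]$ built from $\tau(C^N)$ and suitable powers of the~$s_i$, yields a containment of the form
\[
A_N\,D_{\vs}^{\vj} D_t^k(\sigma(f)) \in W_N := \sum_{\substack{|\aalpha| \leq DN \\ \bbeta \in B}} K[t]_{\leq DN}\, s_1^{\alpha_1} \cdots s_{n-1}^{\alpha_{n-1}}\,\sigma(\vD_x^\bbeta f)
\]
valid whenever $|\vj|+k \leq N$, with $D = n(2 + \sum_j d_j)$ and $B = \prod_j \{0,\dots,r_j-1\}$. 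A counting argument with an ansatz $P = \sum_{|\vj|+k \leq N} p_{\vj,k}(t)\,D_{\vs}^{\vj} D_t^k$ whose undetermined coefficients $p_{\vj,k} \in K[t]$ have $t$-degree at most $N'$ compares the $\binom{N+n}{n}(N'+1)$ free scalars to roughly $|B|\binom{DN+n-1}{n-1}(N'+DN+1)$ linear equations over~$K$; for the choices of $N$ and $N'$ in~\eqref{EQ:N-and-N'}, this ratio reduces asymptotically to $(2+1/D)^n/n > 1$, so a non-trivial solution exists and produces a non-zero $P \in K[t]\<\vD_{\vs}, D_t>$ annihilating $\sigma(f)$, of order at most~$N$ and $t$-degree at most~$N'$.

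The last step is to extract from $P$ a non-zero univariate annihilator $\tilde P \in K[t]\<D_t>$ of~$\Delta(f)$. Writing $P = \sum_{\vj \in S} Q_{\vj}(t, D_t)\,D_{\vs}^{\vj}$, where $S \subseteq \bN^{n-1}$ is the (finite) support of $P$ in the multi-index~$\vj$, I pick any $\vj^* \in S$ that is minimal in~$S$ under the componentwise partial order, and extract from $P(\sigma(f)) = 0$ the coefficient of $s_1^{-1-j^*_1} \cdots s_{n-1}^{-1-j^*_{n-1}}$. By the falling-factorial argument in the proof of Lemma~\ref{LEM:coeff-extraction-univ}, applied coordinatewise, the contribution of $Q_{\vj} D_{\vs}^{\vj}(\sigma(f))$ at this coefficient vanishes whenever some $j_i > j^*_i$; by the minimality of $\vj^*$ in~$S$, the only surviving term corresponds to $\vj = \vj^*$, and it is a non-zero scalar multiple of $Q_{\vj^*}(t, D_t)\,\Delta(f)$. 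Setting $\tilde P := Q_{\vj^*}$ gives the desired annihilator in $K[t]\<D_t>$, with $\ord(\tilde P) \leq N$ and $\deg(\tilde P) \leq N'$; the asymptotic bounds~\eqref{EQ:bounds-N-N'} then follow on substituting $D = O(n^2 d_f)$ and $\prod_j r_j \leq r_f^n$. I expect the main obstacle to be the detailed bookkeeping in the second step, namely verifying the exact degree bounds in both~$t$ and the $s_i$ through the compound substitution~$\tau$ with multiple $\vs$-variables and the simultaneous reduction of Lemma~\ref{LM:reduce-2}; the extraction step is conceptually simple but requires the subtle choice of a \emph{minimal} element of~$S$ rather than the componentwise minimum, which may fail to lie in~$S$.
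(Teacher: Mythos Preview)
Your proposal follows the paper's approach closely: it is Lipshitz's single-step method, with the same substitution $\tau,\sigma$, the same filtration $\cG_N$ expressed through Lemma~\ref{LM:reduce-2} for $S=\{1,\dots,n\}$, and the same dimension-counting to produce a nonzero $P\in K[t]\langle \vD_s,D_t\rangle$ annihilating $\sigma(f)$. Two points deserve comment.

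\emph{Extraction step.} You extract $\tilde P$ by choosing $\vj^*$ minimal for the \emph{componentwise} order on the support~$S$, whereas the paper (Lemma~\ref{LEM:coeff-extraction}) uses the \emph{lexicographic} minimum and a variable-by-variable induction. Your argument is correct and in fact more direct: for any $\vj\in S\setminus\{\vj^*\}$, minimality forces some $j_i>j^*_i$, and then the factor $[j_i-j^*_i-1]_{j_i}$ in the falling factorial $\prod_k [u_k]_{j_k}$ (evaluated at the unique $\vu=\vj-\vj^*-\mathbf 1$ contributing to the coefficient of $\vs^{-\vj^*-\mathbf 1}$) passes through~$0$. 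The paper's lex argument gives the same conclusion but with more bookkeeping; your route avoids the induction entirely.

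\emph{Counting step.} Here your sketch is too loose to stand on its own. With your separated degree bounds the number of equations is at most $R\binom{DN+n-1}{n-1}(DN+N'+1)$, and the asymptotic ratio of unknowns to equations is $(2+1/D)^n/(2n)$, not $(2+1/D)^n/n$; more importantly, an asymptotic ratio $>1$ does not by itself prove the exact inequality needed for the stated $N,N'$. The paper closes this gap by the elementary chain $N^nN'=R(2DN+N)^n>R(2DN+n)^n=R(DN+N'+n)^n$, using $N>2n$; since your target space is contained in the paper's $W_{N,N'}=\sum_{\bbeta\in B}K[\vs,t]_{\le DN+N'}\,\sigma(\vD_x^{\bbeta}f)$, the same exact inequality suffices for your version as well. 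You should replace the asymptotic hand-wave by that one-line computation.
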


To prepare for the proof,
we specialize the setting introduced at the beginning of Section~\ref{SEC:bound}
by setting $m=n-1$, so that $t = x_n$,
and we define two maps $\sigma$ and~$\tau$ from~$\cS$ to~$M$ by
\begin{equation}\label{EQ:gen-tau}
\tau(h(\vx)) = h\left(s_1, \frac{s_2}{s_1}, \frac{s_3}{s_2}, \dots, \frac{s_{n-1}}{s_{n-2}}, \frac{t}{s_{n-1}}\right)
\quad\text{and}\quad
\sigma(h(\vx)) = \frac{\tau(h(\vx))}{s_1 \dotsm s_{n-1}} ,
\end{equation}
which the reader will compare with~\eqref{EQ:simple-tau}.
Hence, as in the previous subsection,
$\tau$~is a ring morphism and the formula~\eqref{EQ:non-morph} holds again.

In order to generalize Lemmas \ref{LEM:coeff-extraction-univ} and~\ref{LEM:ann-univ},
we introduce some convenient notation for coefficient extraction.
For a series
\begin{equation*}
g = \sum_{\vi,j} g_{\vi,j} \vs^\vi t^j \in M ,
\end{equation*}
variables $v_1,\dots,v_\ell$ and exponents $e_1,\dots,e_\ell$,
with $\{v_1,\dots,v_\ell\} \subset \{\vs,t\}$,
we denote by
\[ [v_1^{e_1}\dotsb v_\ell^{e_\ell}] g \]
the sub-series of~$g$ involving only the monomials~$\vs^\vi t^j$ in which
$v_1$~has exponent exactly~$e_1$, $v_2$~has exponent exactly~$e_2$, etc.
Note that this is mere notation and that
$[v_1^{e_1}]g$~need not be equal to~$[v_1^{e_1}v_2^0]g$
although~$v_1^{e_1} = v_1^{e_1}v_2^0$ in~$M$.
We do analogously with an operator
$P \in (K[t]\<D_t>)[\vD_s]$
and a set of variables $\{v_1,\dots,v_\ell\} \subset \{\vD_s\}$,
with the convention that coefficients are always written to the left of the monomials.

\begin{lemma}\label{LEM:coeff-extraction}
Let $P \in (K[t]\<D_t>)[\vD_s]$ be a non-zero operator
viewed with coefficients in~$K[t]\<D_t>$.
Consider any lexicographical order~$\succ$
on the commutative monoid generated by $\{D_{s_1}, \dots, D_{s_{n-1}}\}$,
e.g., the lexicographical order
for which $D_{s_1} \succ D_{s_2} \succ \cdots \succ D_{s_{n-1}}$.
Let $D_{s_1}^{\alpha_1} \dotsb D_{s_{n-1}}^{\alpha_{n-1}}$ be the minimal monomial in~$P$
with respect to this order, so that
\begin{equation}\label{EQ:def-P}
P = \tilde P(t; D_t) D_{s_1}^{\alpha_1} \dotsb D_{s_{n-1}}^{\alpha_{n-1}} + \text{terms with higher monomials}
\end{equation}
for some non-zero $\tilde P \in K[t]\<D_t>$.
Additionally, let
\begin{equation*}
g = \sum_{\vi,j} g_{\vi,j} \vs^\vi t^j
\end{equation*}
be any series in~$M$.
Then,
\begin{equation*}
[s_1^{-(\alpha_1+1)} \dotsb s_{n-1}^{-(\alpha_{n-1}+1)}] P(g) =
(-1)^{|\aalpha|} \alpha_1 ! \dotsb \alpha_{n-1} ! \, \tilde P([s_1^{-1}\dotsb s_{n-1}^{-1}] g) .
\end{equation*}
\end{lemma}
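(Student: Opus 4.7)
The plan is to generalize the one-variable observation made in the proof of Lemma~\ref{LEM:coeff-extraction-univ}, namely that for any integer exponent $e$ and any non-negative integer $\beta$,
\begin{equation*}
D_s^\beta(s^e) = e(e-1)\cdots(e-\beta+1)\, s^{e-\beta},
\end{equation*}
where the falling-factorial coefficient vanishes if and only if one of the factors is zero, i.e. if and only if $0 \leq e \leq \beta-1$. Applying this variable by variable, I can track when a given monomial $\vD_s^\bbeta$ of~$P$ can contribute to the target coefficient $s_1^{-(\alpha_1+1)}\cdots s_{n-1}^{-(\alpha_{n-1}+1)}$ in $P(g)$: for each~$i$, the $s_i$-exponent $\iota_i$ pulled from~$g$ must satisfy $\iota_i = \beta_i - \alpha_i - 1$, \emph{and} the product $\iota_i(\iota_i-1)\cdots(\iota_i-\beta_i+1)$ must be non-zero. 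A brief case split on the sign of $\beta_i-\alpha_i-1$ shows that the only way for the product not to vanish is $\beta_i \leq \alpha_i$, in which case the factor equals
\begin{equation*}
(\beta_i-\alpha_i-1)(\beta_i-\alpha_i-2)\cdots(-\alpha_i) = (-1)^{\beta_i}\,\frac{\alpha_i!}{(\alpha_i-\beta_i)!} .
\end{equation*}

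The next and decisive step is a short lex-order argument. If a monomial $\vD_s^\bbeta$ of~$P$ contributes non-trivially, then by the previous step $\beta_i \leq \alpha_i$ for every~$i$. But in the lexicographic order with $D_{s_1} \succ D_{s_2} \succ \cdots$, the minimality of $\vD_s^\aalpha$ inside~$P$ forces $(\beta_1,\dots,\beta_{n-1}) \succeq (\alpha_1,\dots,\alpha_{n-1})$, which combined with the coordinate-wise inequalities $\beta_i \leq \alpha_i$ leaves only $\bbeta = \aalpha$. Thus the only monomial of~$P$ contributing to the coefficient under consideration is the minimal one, and its contribution specializes the factor above to $(-1)^{\alpha_i}\alpha_i!$ in each variable.

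I will then assemble the computation: writing $P = \tilde P(t;D_t)\,\vD_s^\aalpha + R$ with $R$ a sum of strictly larger monomials, and decomposing $g$ along its $\vs$-support, only the slice $s_1^{-1}\cdots s_{n-1}^{-1}\,[s_1^{-1}\cdots s_{n-1}^{-1}]g$ produces the monomial $s_1^{-(\alpha_1+1)}\cdots s_{n-1}^{-(\alpha_{n-1}+1)}$ after applying $\vD_s^\aalpha$, and it does so with coefficient $(-1)^{|\aalpha|}\alpha_1!\cdots\alpha_{n-1}!$. Since $\tilde P(t;D_t)$ acts only on the $t$ variable and commutes with the $\vs$-coefficient extraction, applying it to this slice yields exactly the stated identity.

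The routine part is the derivative-action bookkeeping; the only real subtlety is checking that the lex-minimality of $\vD_s^\aalpha$ really forces $\bbeta = \aalpha$ once one has the componentwise inequalities $\beta_i \leq \alpha_i$, and that the argument does not depend on \emph{which} lex order is chosen (any lex order with a fixed ordering of the $D_{s_i}$ works, because the coordinate-wise inequalities together with lex-dominance of $\bbeta$ over $\aalpha$ iteratively pin down $\beta_1 = \alpha_1$, then $\beta_2 = \alpha_2$, and so on). This will be the one point worth stating explicitly in the write-up; the rest is direct computation.
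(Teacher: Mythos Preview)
Your argument is correct and rests on the same core observation as the paper's proof: that $D_{s_i}^{\beta_i}(s_i^{\iota_i})$ lands in degree $-(\alpha_i+1)$ only when $\iota_i = \beta_i - \alpha_i - 1$, and that the attached falling factorial $\iota_i(\iota_i-1)\cdots(\iota_i-\beta_i+1)$ vanishes precisely when $\beta_i > \alpha_i$. The difference is organizational. The paper proceeds by induction on~$i$: at stage~$i$ it writes $\hat P = \sum_{j\geq\alpha_i}\hat P_j D_{s_i}^j$ (the lower bound $j\geq\alpha_i$ coming from the lex structure already processed), and then the univariate residue computation forces $u=-1$ and $j=\alpha_i$. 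You instead treat all $n-1$ variables simultaneously: first the falling-factorial vanishing gives the componentwise bound $\beta_i\leq\alpha_i$ for every~$i$, and only then do you invoke lex-minimality $\bbeta\succeq\aalpha$ to conclude $\bbeta=\aalpha$ in one stroke.

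Your route is a bit more direct and makes the role of the lexicographic order more transparent (the inductive peeling in the paper implicitly encodes the same pinning-down argument you spell out). Conversely, the paper's induction stays closer to the one-variable Lemma~\ref{LEM:coeff-extraction-univ} and does not need the separate observation that componentwise $\leq$ together with lex $\succeq$ forces equality. Neither approach is materially shorter; they are two packagings of the same computation.
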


\begin{proof}
For the proof, we fix the lexicographical order~$\succ$
to satisfy $D_{s_1} \succ D_{s_2} \succ \cdots \succ D_{s_{n-1}}$.
Any other lexicographical order would be dealt with by obvious modifications.
We claim that, for any~$i$, after writing
\begin{equation*}
P = \bar P(t; D_t, D_{s_{i+1}}, \dots, D_{s_{n-1}}) D_{s_1}^{\alpha_1} \dotsb D_{s_i}^{\alpha_i} + Q
\end{equation*}
for some non-zero $\bar P \in K[t]\<D_t,D_{s_{i+1}},\dots,D_{s_{n-1}}>$
and some operator~$Q$ whose monomials~$\vD_s^\bbeta$ are all such that
$(\beta_1,\dots,\beta_i)$ is lexicographically higher than $(\alpha_1,\dots,\alpha_i)$,
we have
\begin{equation}\label{EQ:partial-extraction}
[s_1^{-(\alpha_1+1)} \dotsb s_i^{-(\alpha_i+1)}] P(g) =
(-1)^{\alpha_1+\dots+\alpha_i} \alpha_1 ! \dotsb \alpha_i ! \, \bar P([s_1^{-1}\dotsb s_i^{-1}] g) .
\end{equation}
The proof is by induction on~$i \in \{0,\dots,n-1\}$.
The base case~$i=0$ corresponds to no coefficient extraction and~$\bar P = P$,
so that \eqref{EQ:partial-extraction}~is the tautology~$P(g) = 1 \times \bar P(g)$.
Fix~$i\geq1$ and,
in order to prove~\eqref{EQ:partial-extraction},
assume the analog of~\eqref{EQ:partial-extraction} at~$i-1$,
that is,
\begin{equation}\label{EQ:partial-extraction-hyp-ind}
\begin{aligned}
[s_1^{-(\alpha_1+1)} &\dotsb s_{i-1}^{-(\alpha_{i-1}+1)}] P(g) = \\
& (-1)^{\alpha_1+\dots+\alpha_{i-1}} \alpha_1 ! \dotsb \alpha_{i-1} ! \, \hat P([s_1^{-1}\dotsb s_{i-1}^{-1}] g) ,
\end{aligned}
\end{equation}
for some non-zero
\begin{equation*}
\hat P = \sum_{j\geq\alpha_i} \hat P_j(t; D_t, D_{s_{i+1}}, \dots, D_{s_{n-1}}) D_{s_i}^j
\in K[t]\<D_t,D_{s_i},\dots,D_{s_{n-1}}> .
\end{equation*}
Consider a series~$c \in M$ involving only~$t,s_{i+1},\dots,s_{n-1}$,
as well as some integer~$u \in \bZ$, to compute
\begin{equation*}
[s_i^{-(\alpha_i+1)}] \hat P(c s_i^u) =
  \sum_{j\geq\alpha_i} \hat P_j(c) \, u (u-1) \dotsb (u-j+1) \, [s_i^{-(\alpha_i+1)}] s_i^{u-j} .
\end{equation*}
The last term $[s_i^{-(\alpha_i+1)}] s_i^{u-j}$ is equal to~$1$ if and only if~$j = u+\alpha_i+1$,
and is zero otherwise.
So the sum reduces to $\hat P_{u+\alpha_i+1}(c) \, u (u-1) \dotsb (-\alpha_i)$.
This is zero if~$u \geq 0$ because of the polynomial in~$u$, but also if~$u \leq -2$ because $\hat P_j = 0$ if~$j < \alpha_i$.
The only possibly non-zero case is therefore for~$u = -1$,
making the sum equal to~$(-1)^{\alpha_i} \alpha_i! \, \hat P_{\alpha_i}(c)$.
By linearity, we obtain
\begin{equation}\label{EQ:extract-just-one-var}
[s_i^{-(\alpha_i+1)}] \hat P([s_1^{-1}\dotsb s_{i-1}^{-1}] g) =
  (-1)^{\alpha_i} \alpha_i! \, \hat P_{\alpha_i}([s_i^{-1}] \, [s_1^{-1}\dotsb s_{i-1}^{-1}] g) .
\end{equation}
Applying~$[s_i^{-(\alpha_i+1)}]$ to~\eqref{EQ:partial-extraction-hyp-ind},
combining with~\eqref{EQ:extract-just-one-var},
and setting~$\bar P = \hat P_{\alpha_i}$,
we thus obtain~\eqref{EQ:partial-extraction}.
The case~$i = n-1$ proves the lemma by providing~$\tilde P = \bar P$.
\end{proof}

Consider again a non-necessarily D-finite series~$f$ as in~\eqref{EQ:def-f}.
By the definition~\eqref{eq:complete-diag} of the complete diagonal~$\Delta(f)$,
and by the definition~\eqref{EQ:gen-tau} of $\tau$ and~$\sigma$,
this complete diagonal~$\Delta(f)$ is~$[s_1^{-1}\dotsb s_{n-1}^{-1}]\sigma(f)$.
We will now derive the following analogue of Lemma~\ref{LEM:ann-univ}.

\begin{lemma}\label{LM:ann}
Let $f$ be as in~\eqref{EQ:def-f}.
Fix any lexicographical order~$\succ$
on the commutative monoid generated by $\{D_{s_1}, \dots, D_{s_{n-1}}\}$.
If $P(\sigma(f)) = 0$ for $P$ and~$\tilde P\neq0$ as in~\eqref{EQ:def-P},
then $\tilde P$~annihilates~$\Delta(f)$.
\end{lemma}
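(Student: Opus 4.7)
The plan is to obtain the result as a direct corollary of Lemma~\ref{LEM:coeff-extraction}, by extracting a single coefficient from the identity $P(\sigma(f)) = 0$. The identification already recorded in the paragraph preceding the lemma, namely that $\Delta(f) = [s_1^{-1}\dotsb s_{n-1}^{-1}]\sigma(f)$, is the bridge between the two statements, so the argument should be very short.

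Concretely, I would first set $g := \sigma(f) \in M$ and apply the coefficient-extraction formula of Lemma~\ref{LEM:coeff-extraction} to this $g$ and to the given operator $P$, whose minimal monomial with respect to $\succ$ is $D_{s_1}^{\alpha_1}\dotsb D_{s_{n-1}}^{\alpha_{n-1}}$ with coefficient $\tilde P$. That lemma yields the equality
\begin{equation*}
[s_1^{-(\alpha_1+1)} \dotsb s_{n-1}^{-(\alpha_{n-1}+1)}] P(\sigma(f)) =
(-1)^{|\aalpha|}\,\alpha_1! \dotsb \alpha_{n-1}!\; \tilde P\bigl([s_1^{-1}\dotsb s_{n-1}^{-1}]\sigma(f)\bigr).
\end{equation*}

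Next, I would use the hypothesis $P(\sigma(f)) = 0$, which forces the left-hand side to vanish, together with the characteristic-zero assumption on $K$ from Section~\ref{SECT:dfinite}, which guarantees that the scalar factor $(-1)^{|\aalpha|}\alpha_1!\dotsb\alpha_{n-1}!$ is non-zero and can be cancelled. This gives
\begin{equation*}
\tilde P\bigl([s_1^{-1}\dotsb s_{n-1}^{-1}]\sigma(f)\bigr) = 0.
\end{equation*}

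Finally, I would invoke the observation that $[s_1^{-1}\dotsb s_{n-1}^{-1}]\sigma(f) = \Delta(f)$, which follows immediately from the definition~\eqref{EQ:gen-tau} of $\sigma$ together with the expression~\eqref{eq:complete-diag} of the complete diagonal as $\sum_{i \geq 0} a_{i,\dots,i} t^i$. The conclusion $\tilde P(\Delta(f)) = 0$ follows. There is essentially no obstacle: the entire combinatorial content has already been placed in Lemma~\ref{LEM:coeff-extraction}, and the only subtlety is the need for the characteristic-zero hypothesis to ensure the factorial factor is invertible, which is automatic here.
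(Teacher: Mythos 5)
Your proposal is correct and follows essentially the same argument as the paper's proof: apply Lemma~\ref{LEM:coeff-extraction} with $g = \sigma(f)$, use the hypothesis $P(\sigma(f)) = 0$ and the identification $[s_1^{-1}\dotsb s_{n-1}^{-1}]\sigma(f) = \Delta(f)$, and cancel the nonzero factorial factor. The one thing you make explicit that the paper leaves implicit is the appeal to characteristic zero to invert that scalar, which is a welcome clarification but not a different route.
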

\begin{proof}
Lemma~\ref{LEM:coeff-extraction} and the equality
$[s_1^{-1}\dotsb s_{n-1}^{-1}]\sigma(f) = \Delta(f)$
imply
\begin{equation*}
(-1)^{|\aalpha|} \alpha_1 ! \dots \alpha_{n-1} ! \, \tilde P (\Delta(f)) =
[s_1^{-(\alpha_1+1)} \dotsb s_{n-1}^{-(\alpha_{n-1}+1)}]P(\sigma(f)) = 0 .
\end{equation*}
Hence, $\tilde P(\Delta(f)) = 0$.
\end{proof}

We will now construct an operator~$P$.
Henceforth, it will be convenient to write~$w$ in place of~$s_1 \dotsm s_{n-1}$
and $D_i$ in place of~$D_{x_i}$, for $ i = 1,\dots, n $.
Define
\[
\cG_m := \frac{K[\vs , t]_{\leq 2nm}}{w^{2m}} \sigma\left( \span_K \, \left\lbrace  \Dx^\aalpha f \mid |\aalpha|\leq m \right\rbrace  \right) .
\]
For convenience, write $s_0 := 1, s_n :=t$.
By the chain rule, for all $g \in \cS$ and each $i = 1,2, \dots, n-1$, we have
\begin{equation}\label{EQ:1.1}
D_{s_i}(\sigma(g)) = -\frac{1}{s_i} \sigma(g) + \frac{1}{s_{i-1}}\sigma(D_i \, (g)) - \frac{s_{i+1}}{s_i^2}\sigma(D_{i+1}(g)),
\end{equation}
and
\begin{equation}\label{EQ:1.2}
D_t(\sigma(g)) = \frac{1}{s_{n-1}} \sigma(D_{n}(g)).
\end{equation}
For all $|\aalpha|\leq m$, and all $p(\vs,t) \in K[\vs,t]_{\leq 2nm}$,
the chain rule implies that if $ 1\leq i \leq n-1 $, then
\begin{equation*}
\begin{aligned}
D_{s_i}
\left(\frac{p(\vs,t)}{w^{2m}} \,  \sigma(\vD_x^\aalpha f) \right)
& = D_{s_i}\left(\frac{1}{w^{2m}}\right) p \, \sigma(\vD_x^\aalpha f)
+ \frac{D_{s_i}(p)}{w^{2m}} \, \sigma(\vD_x^\aalpha f)
\\
& \qquad + \frac{p}{w^{2m}}  \, D_{s_i}(\sigma(\vD_x^\aalpha f)).
\end{aligned}
\end{equation*}
Rewriting the first two terms of the right-hand side over the denominator $w^{2(m+1)}$
shows that they are both in~$\cG_{m+1}$.
Similarly, making $g = \vD_x^\aalpha f$ in~\eqref{EQ:1.1}
and rewriting over the denominator~$w^{2(m+1)}$
shows that the third term is also in~$\cG_{m+1}$.
Therefore, $D_{s_i} \, \cG_m \subseteq \cG_{m+1}$.
A similar proof, using \eqref{EQ:1.2}, also shows
$D_t \, \cG_m \subseteq \cG_{m+1} $.
Since $1 \in \cG_0$, we get by induction that for all $\vi \in \bN^{n-1}$ and~$j \in \bN$,
\[
D_t^j \, \vD_{s}^{\vi} (\sigma(f)) \in \cG_{j+|\vi|}.
\]
Also note that $\cG_m \subseteq \cG_{m'}$ if~$m \leq m'$.
Now, if $ k \leq N',\, j + |\vi| \leq N  $, then
\begin{equation}\label{EQ:3}
t^k D_t^j \vD_{s}^{\vi}
(\sigma(f)) \in \frac{K[\vs , t]_{\leq 2nN+N'}}{w^{2N}} \, \sigma\left( \span_K \, \left\lbrace  \Dx^\aalpha f \mid |\aalpha|\leq N \right\rbrace  \right).
\end{equation}
Using Definition \ref{DEF:C} when $S = \{1, \dots,n \}$ fixes $B = \prod_{i=1}^n \left\lbrace 0, 1, \dots, r_i - 1\right\rbrace$, the polynomial~$C$, and $d_C = \sum_{j=1}^n d_j \leq n d_f$.
Then by Lemma \ref{LM:reduce-2}, with $u = N, v = |\aalpha|, t = 0$, we have
\[ \Dx^\aalpha\in \frac{1}{C^N}\mycH_{N d_C , 0} + \frac{1}{C^N} J. \]
Applying to $ f $, then applying $ \sigma $, yields:
\begin{equation}\label{EQ:4}
\sigma(\Dx^\aalpha f) \in \frac{1}{\tau(C^N)} \sigma(\mycH_{N d_C, 0} \, (f)) \subseteq \frac{K[\vs, t]_{ \leq nNd_C}}{w^{d_CN}\tau(C^N)} \bigoplus_{ \bbeta\in B} K \, \sigma  (\Dx^\bbeta f).
\end{equation}
Therefore, by \eqref{EQ:3} and~\eqref{EQ:4},
and for $D$~defined as in~\eqref{EQ:N-and-N'},
we have
\begin{equation}\label{EQ:5}
t^k D_t^j\vD_s^\vi \, \sigma(f) \in \frac{K[\vs, t]_{ \leq DN + N'}}{w^{(2+d_C)N} \tau (C^N)}  \bigoplus_{ \bbeta\in B} K \, \sigma  (\Dx^\bbeta f) .
\end{equation}
Denote $A_N(\vs,t) := w^{(2+d_C)N} \tau (C^N) \in K[\vs,t]$.
For any given $N'$ and~$N$, define
\[
V_{N,N'} = A_N(\vs,t) \ \span_K \{t^k D_t^j \vD_s^\vi \; | \; k \leq N',\, j + |\vi| \leq N\}
\]
and
\[
W_{N,N'} = \sum_{\bbeta\in B}  K[\vs, t]_{ \leq D N + N'}  \sigma(\Dx^\bbeta f) .
\]
We have proved by~\eqref{EQ:5} that there is a $K$-linear map~$\phi$ from~$V_{N,N'}$ to~$W_{N,N'}$ defined by $\phi (P) := P(\sigma(f))$.
Note that
\begin{equation}\label{EQ:dim-VNN-and-dim-WNN}
\dim_K V_{N,N'} = (N'+1) \binom{N+n}{n}, \quad  \dim_K W_{N,N'} \leq R \binom{DN+N'+n}{n} ,
\end{equation}
where $R := r_1 \cdots r_n = O(r_f^n)$.
Fix $N$ and~$N'$ as in~\eqref{EQ:N-and-N'}
($D$~has already been defined as there),
so that $N' = DN$, $N > R \frac{1 + 2nD}{D} > 2n > n$, and
\begin{equation*}
N^n N' = R (2ND + N)^n > R (2ND + n)^n = R (DN + N' + n)^n ,
\end{equation*}
from which follows, with the help of~\eqref{EQ:dim-VNN-and-dim-WNN},
\begin{equation*}
\begin{aligned}
\dim_K V_{N,N'} &> N' \frac{N^n}{n!} \\
  &> R \frac{(DN + N' +n )^n}{n!}  > R \binom{DN+N'+n}{n} \geq  \dim_K W_{N,N'} .
\end{aligned}
\end{equation*}
We therefore obtain $\dim_K V_{N,N'} - \dim_K W_{N,N'} > 0$,
so that $\phi$~is non-injective.
Consider any non-zero kernel element~$Z$, that is,
any family of constants $c_{\vi,j,k} \in K$
indexed by $\vi,j,k$ with $|\vi| + j \leq N$ and~$k \leq N'$,
and such that $\phi(Z) = 0$ for
\begin{equation}\label{eq:def-Z}
Z = \sum_{\vi + j \leq N, \ k \leq N'} c_{\vi,j,k} A_N \, t^k D_t^j \vD_s^\vi .
\end{equation}
Then, the operator $P := A_N^{-1} Z = \sum c_{\vi,j,k} \, t^k D_t^j \vD_s^\vi$
satisfies~$P(\sigma(f)) = 0$ as well.
From~\eqref{eq:def-Z} it follows that
\begin{equation}\label{EQ:pre-bounds-N-N'}
\deg(P) \leq N', \quad \ord(P) \leq N .
\end{equation}
Finally, $P$~can be written
\begin{equation*}
P = \tilde P(t; D_t) D_{s_1}^{\aalpha_1} \dots D_{s_{n-1}}^{\aalpha_{n-1}} + \text{higher terms}
\end{equation*}
with $\tilde P (t; D_t) \neq 0$,
and the operator~$\tilde P$ annihilates~$\Delta(f)$ by Lemma~\ref{LM:ann}
and satisfies the announced bounds~\eqref{EQ:bounds-N-N'}
because of~\eqref{EQ:pre-bounds-N-N'}.

Finishing the proof of Theorem~\ref{THM:complete bound} only requires
to validate the asymptotic estimates in~\eqref{EQ:bounds-N-N'}.
Set~$S := \sum_{i=1}^n d_i$, which goes to infinity because $d_f \leq S \leq n d_f$.
Fix~$\varepsilon > 0$.
From the value of~$D$ in~\eqref{EQ:N-and-N'} follow,
at least for $n \geq 1/(4\varepsilon)$,
\begin{equation*}
\begin{aligned}
D &= nS\left(1+\frac2S\right) \leq nS\left(1+\frac2{d_f}\right) = O(nS) , \\
2D+1 &= 2nS\left(1+\frac2S+\frac1{4nS}\right) \leq 2nS\left(1+\frac2{d_f}+\frac1{4nd_f}\right)
    \leq 2nS\left(1+\frac{2+\varepsilon}{d_f}\right) ,
\end{aligned}
\end{equation*}
and then, at least for $n \geq 1/(4\varepsilon)$ and~$d_f \geq 2$,
\begin{equation*}
\begin{aligned}
(2D+1)^n &\leq 2^nn^nS^n(1+\varepsilon/2)^n \leq (2+\varepsilon)^n n^{2n} d_f^n , \\
\frac{(2D+1)^n}{D} &\leq \frac{(2+\varepsilon)^n n^{2n} d_f^n}{n(2+S)}
    \leq \frac{(2+\varepsilon)^n n^{2n} d_f^n}{n(2+d_f)}
    \leq (2+\varepsilon)^n n^{2n-1} d_f^{n-1} .
\end{aligned}
\end{equation*}
Combining with~\eqref{EQ:N-and-N'} yields~\eqref{EQ:bounds-N-N'}.

\bigskip

\noindent {\bf Acknowledgement.}
The authors would like to thank Florent Koechlin and Pierre Lairez
for making us aware of several works related to diagonals of rational functions.

Shaoshi Chen and Pingchuan Ma were partially supported
by the National Key R\&D Program of China (No.\ 2023YFA1009401),
the NSFC grants (No.\ 12271511 and No.\ 11688101),
the CAS Funds of the Youth Innovation Promotion Association (No.\ Y2022001),
and the Strategic Priority Research Program of the Chinese Academy of Sciences (No.\ XDB0510201).
Frédéric Chyzak was supported in part
by the French ANR grant \href{https://mathexp.eu/DeRerumNatura/}{\emph{De rerum natura}} (ANR-19-CE40-0018),
by the French-Austrian ANR-FWF grant EAGLES (ANR-22-CE91-0007 \& FWF-I-6130-N),
and by the European Research Council under the European Union's Horizon Europe research and innovation programme, grant agreement 101040794 (10000 DIGITS).
Part of Pingchuan Ma's work was done during a 6-month visit at Inria in 2023.
Chaochao Zhu was partially supported by 2021 Research Start-up Fund for High-level Talents (Natural Sciences) (Project No.\ 00701092242).
All authors were also supported by the International Partnership Program of Chinese Academy of Sciences (Grant No.\ 167GJHZ2023001FN).

\bibliographystyle{plain}
\bibliography{diagonal.bib}

\end{document}